\numberwithin{equation}{section}
\def\cb{{\mathcal B}}
\def\ce{{\mathcal E}}
\def\cf{{\mathcal F}}
\def\ch{{\mathcal H}}
\def\ck{{\mathcal K}}
\def\cs{{\mathcal S}}
\def\ga{{\mathfrak A}} 
\def\gpb{{\mathfrak b}}
\def\ge{{\mathfrak E}}
\def\gf{{\mathfrak F}}
\def\gg{{\mathfrak G}}
\def\gq{{\mathfrak Q}}
\def\gar{{\mathfrak R}}
\def\gz{{\mathfrak Z}}
\def\bc{{\mathbb C}}
\def\bn{{\mathbb N}}
\def\bp{{\mathbb P}}
\def\br{{\mathbb R}}
\def\bz{{\mathbb Z}}
\def\a{\alpha}
\def\b{\beta}
\def\g{\gamma}  \def\G{\Gamma}
\def\eps{\varepsilon}
\def\l{\lambda}
\def\s{\sigma} 
\def\t{\tau}
\def\f{\varphi}  
\def\om{\omega} \def\Om{\Omega}
\def\id{\hbox{id}}
\def\ker{\hbox{Ker}}
\newtheorem{thm}{Theorem}[section]
\newtheorem{lem}[thm]{Lemma}
\newtheorem{cor}[thm]{Corollary}
\newtheorem{prop}[thm]{Proposition}
\theoremstyle{definition}
\newtheorem{rem}[thm]{Remark}
\newtheorem{defin}[thm]{Definition}
\def\spn{\mathop{\rm span}}
\def\idd{{1}\!\!{\rm I}}
\newcommand{\nn}{\nonumber}
\begin{document}

\title[monotone processes and ergodic theorems]
{Ergodic theorems in quantum probability: an application to the monotone stochastic processes}
\author{Vitonofrio Crismale}
\address{Vitonofrio Crismale\\
Dipartimento di Matematica\\
Universit\`{a} degli studi di Bari\\
Via E. Orabona, 4, 70125 Bari, Italy}
\email{\texttt{vitonofrio.crismale@uniba.it}}
\author{Francesco Fidaleo}
\address{Francesco Fidaleo\\
Dipartimento di Matematica\\
Universit\`{a} degli studi di Roma Tor Vergata\\
Via della Ricerca Scientifica 1, Roma 00133, Italy} \email{{\tt
fidaleo@mat.uniroma2.it}}
\author{Yun Gang Lu}
\address{Yun Gang Lu\\
Dipartimento di Matematica\\
Universit\`{a} degli studi di Bari\\
Via E. Orabona, 4, 70125 Bari, Italy}
\email{\texttt{yungang.lu@uniba.it}}
\date{\today}

\begin{abstract}

\vskip0.1cm\noindent We give sufficient conditions ensuring the strong ergodic property of unique mixing for $C^*$-dynamical systems arising from Yang-Baxter-Hecke quantisation. We discuss whether they can be applied to some important cases including Monotone, Boson, Fermion and Boolean $C^*$-algebras in a unified version. The Monotone and the Boolean cases are treated in full generality, the Bose/Fermi cases being already widely investigated. In fact, on one hand we show that the set of stationary stochastic processes are isomorphic to a segment in both the Monotone and Boolean situations, on the other hand the Boolean processes enjoy the very strong  property of unique mixing with respect to the fixed point subalgebra and the Monotone ones do not.\\
{\bf Mathematics Subject Classification}: 60G10, 46L55, 37A30, 46L30, 46N50.\\
{\bf Key words}:  Non commutative Probability; Stationary processes; Non commutative dynamical systems; Ergodic theorems; $C^*$-algebras and states; Applications to Quantum Physics.
\end{abstract}

\maketitle

\section{introduction}
\label{sec1}
One of main tasks in studying dynamical systems consists in finding suitable conditions yielding some convergence to the equilibrium. From the classical viewpoint, given a dynamical system $(\Om,T)$, consisting of a compact Hausdorff space $\Om$ and a homeomorphism $T$, one finds, among the most prominent, the unique ergodicity, which deals with the existence of a unique invariant Borel probability measure $\mu$ for the dynamics $T$. Such a strong property possesses many generalisations, even if one considers a non-commutative, or quantum, dynamics, given by the couple $(\ga, \alpha)$ made of a $C^*$-algebra $\ga$ and an automorphism $\alpha$. In fact, the fast recent growing up of such investigations has shown that, besides the unique ergodicity, other notions of convergence to a unique invariant state can be studied, which are meaningless or reduced to the usual one in the classical case. Among them, we mention the unique weak mixing and the unique mixing, see \cite{F22, {FM1}, {FM2}}. In addition, the emergence of non trivial fixed point subalgebras of $\ga$ for the action of $\alpha$, has suggested the analogue of the cited above properties when there exists an invariant conditional expectation onto such subalgebra which plays the role of the state in the previous case.

Very recently, it has been pointed out that there is a one-to-one correspondence between quantum stochastic processes based on a $C^*$-algebra, and states on the free product $C^*$-algebra of the infinitely many copies of the same algebra \cite{CrFid, CrFid1}. This approach, based on the universal property of free product $C^*$-algebra, can be applied to several remarkable examples including the classical ones. Thus, quite naturally in many cases of interest, the investigation of stochastic processes can be achieved directly on concrete $C^*$-algebras seen as the quotient. This is the case of CCR and CAR algebras, the most known and investigated examples both for the natural applications in Quantum Physics (see \cite{BR1} and references cited therein), the Boolean algebra useful in quantum optics \cite{vW}, as well as the classical (i.e. commutative) case, covered by considering directly the free Abelian product which corresponds to the free product $C^*$-algebra factored out by the ideal generated by all the commutators \cite{CrFid1}.

It seems then natural to address the study of ergodicity for a family of dynamical systems whose set of algebras contains those listed above, and the automorphism is the simplest one, i.e. the shift. The natural action of the permutation group can be also considered achieving the so-called de Finetti-like Theorems, see e.g. \cite{ABCL, CrF, St2}.
Indeed, it is our goal to study such properties when $\ga$ is a concrete $C^*$-algebra whose generators realise the commutation relations between creators and annihilators in suitable Fock spaces, and $\alpha$ is the action of group $\mathbb{Z}$ (i.e. the shift), or of the group of permutations on the algebra. We mention that the study or ergodicity for algebras generated by elements having Fock representation as creators and annihilators has been already done for the case of $q$-commutation relations ($|q|<1$) in \cite{DykFid} and \cite{CrFid}. There it was shown that the shift and the permutations have both the strong ergodic property of unique mixing.

The goal of the present paper is to extend the investigation of ergodic states with respect to the action of the shift in the case of a $C^*$-algebra whose generators arise as creators and annihilators on the so-called $T$-deformed Fock space \cite{Boz}. This situation slightly differs from the $q$-deformed one. Namely, the selfadjoint Yang-Baxter operator $T$ on the tensor product of a Hilbert space with himself which originates the $q$-deformation is a strict contraction, whereas here we require it is of Hecke type, see below. This entails it is necessarily bounded, with norm bigger or equal to one \cite{Boz}. Such a quantisation scheme covers the CCRs, the CARs, as well as the Boolean or the Monotone commutation relations, among the most important examples. As it is well known that the free (or Boltzmann) commutation relations are a particular case of $T$ (i.e. $T=0$), one can say that, after our results, the description of the ergodic behaviour of some relevant dynamical systems giving rise to the five universal independencies in Quantum Probability (cf. \cite{BSc, {M2}}) has been made.
In particular, here  we find sufficient conditions under which achieve strong ergodic results for $T$-deformed $C^*$-algebras. Moreover, we check whether they can be applied in the Monotone case. The Boolean case is analysed in full generality too, and the Boson and Fermion cases are reviewed to provide a unified treatment. More in detail, the paper is organised as follows.

In Section \ref{prel} we introduce some definitions and known results about ergodicity and stochastic processes we will use throughout the successive parts.

Section \ref{YB} is devoted to Yang-Baxter quantisation. There we write down the natural condition that, if satisfied, ensures that a group acts as Bogoliubov automorphisms on the $T$-deformed $C^*$-algebra $\gar_T$.

In Section \ref{sec3}, we search for and give sufficient conditions under which $(\gar_T,\alpha)$ is uniquely mixing. These are summarised in Theorem \ref{mixing}.

Section \ref{monsec} deals with the Monotone stochastic processes. Here, after checking that some of the technical conditions presented in Section \ref{sec3} are satisfied, we recognise that, as a consequence of relations between Monotone creators and annihilators, Theorem \ref{mixing} cannot be applied. Furthermore, by a simple computation we find the Monotone $C^*$-algebra is not uniquely ergodic with respect to the shift. We show that this obstruction is not a fatal one. In fact, we first completely clarify the fine structure of the Monotone $C^*$-algebra. After, we use it in order to gain the exact structure of the convex set of shift invariant states (Theorem \ref{invmon}), which, quite surprisingly, results isomorphic to a segment of the real line.

Section \ref{exa} starts with the Anti-Monotone case which can be immediately brought back to the Monotone one. We furthermore show that it is possible to construct a $C^*$-dynamical system which can enjoy the unique mixing property with respect to the fixed point subalgebra, although in presence of another obstruction, that is the unboundedness of the annihilators, which cannot allow directly to apply Theorem \ref{mixing}. We exploit such a result for the $C^*$-algebra coming from the Bose commutation relations, by providing a pivotal example for potential physical applications. In such a section devoted to examples, we review the status of arts for the Boson and Fermi cases, concerning the ergodic properties.

Finally, in Section \ref{boolsec} the ergodic properties of Boolean stochastic processes are treated in full generality. Here, the general ergodic properties of Section \ref{sec3} cannot be applied too, because of the particular commutation relations involved. However, even in this case, after showing the fixed point subalgebras with respect to the shift and permutations are not trivial but equal, we are able to prove that the Boolean $C^*$-algebra and the shift are an example of unique mixing dynamical system. Finally, we find that the shift invariant states are isomorphic to a segment of the real line, exactly the same one it has been achieved in \cite{CrFid} for permutation invariant states.

\textbf{Acknowledgements.} The first and second named authors have been partially supported by Gruppo Nazionale per l' Analisi Matematica, la Probabilit\`{a} e le loro Applicazioni of Istituto Nazionale di Alta Matematica.

\section{preliminaries and notations}
\label{prel}
In this section we report for the convenience of the reader, some known definitions, notations and results. All the involved operators are considered bounded if it is not otherwise specified. In addition, the morphisms considered in the present paper preserve all the algebraic properties, included the $*$-operation, without any further mention.

\subsection{ergodic properties}

A $C^*$-dynamical system based on the group $G$, is a pair $(\ga,\a)$ made of $C^*$-algebra $\ga$ which always we suppose unital with unity $\idd$, and an action  
(i.e. a group homomorphism of $G$ into the group of all the automorphisms ${\rm Aut}(\ga)$)
$$
\a:g\in G\mapsto\a_g\in{\rm aut}(\ga)\,.
$$
The {\it fixed point subalgebra} is defined as
$$
\ga^G:=\{a\in\ga\mid \a_g(a)=a\,,\,\,g\in G\}\,.
$$
A state $\f\in\cs(\ga)$ is called $G$-invariant if $\f=\f\, \circ \a_g$ for each $g\in G$. The subset of the $G$-invariant states is denoted by $\cs_G(\ga)$. It is $*$-weakly compact, and its extremal points $\ce_G(\ga)$
are called {\it ergodic} states. For $(\ga, \a)$ as above and an invariant state $\f$ on $\ga$,
$(\pi_\f,\ch_\f,U_\f,\Omega_\f)$ is the Gelfand-Naimark-Segal (GNS for short) covariant
quadruple canonically associated to $\f$, see e.g. \cite{BR1, T1}.
By $\gz_\f:=\pi_\f(\ga)''\bigwedge\pi_\f(\ga)'$ we denote the center of $\pi_\f(\ga)''$.

\noindent We mention the action of the group of the permutations $\bp_J:=\bigcup\{\bp_I|I\subseteq J\, \text{finite}\}$ of
an arbitrary set leaving fixed all the elements in $J$, except a finite number of them. If $\bp_J$ is acting on $\ga$, an element
$\f\in\cs_{\bp_J}(\ga)$ is called {\it symmetric}.
We also mention the simplest case when the Abelian group $\bz$ made of the integer numbers is acting on a $C^*$-algebra. In fact, a (discrete) $C^*$-dynamical system is a pair
$(\ga,\a)$ based on a single automorphism $\a$ of $\ga$, which automatically generates the action of $\bz$. To achieve dissipative (i.e. non unitary) dynamics, one can suppose that $\a$ is merely a completely positive, identity preserving map. In the latter, only the monoid $\bn$ is naturally acting on $\ga$.

Suppose that $\cs_{\bz}(\ga)=\{\om\}$ is a singleton. Such a dynamical system is said to be {\it uniquely ergodic}. One can see that unique ergodicity is equivalent to
\begin{equation}
\label{eav}
\lim_{n\rightarrow+\infty} \frac{1}{n}\sum_{k=0}^{n-1}f(\alpha^k(a))=f(\idd)\omega(a)\,, \quad a\in \ga\,,f\in\ga^*\,,
\end{equation}
or again to
$$
\lim_{n\rightarrow+\infty} \frac{1}{n}\sum_{k=0}^{n-1} \alpha^k(a)=\omega(a)\idd, \,\,\,\,\,\, a\in \ga\,\,,
$$
pointwise in norm. Some natural generalisations of such a strong ergodic property are the following. The first ones concern to replace the ergodic average \eqref{eav} with
$$
\lim_{n\rightarrow+\infty} \frac{1}{n}\sum_{k=0}^{n-1}|f(\alpha^k(a))-f(\idd)\omega(a)|=0\,, \quad a\in \ga\,,f\in\ga^*\,,
$$
or simply
$$
\lim_{n\rightarrow+\infty} f(\alpha^n(a))=f(\idd)\omega(a)\,, \quad a\in \ga\,,f\in\ga^*\,,
$$
for some state $\om\in\cs(\ga)$ which is necessarily unique and invariant. In this case, $(\ga,\a)$ is called {\it uniquely weak mixing} or {\it uniquely mixing}, respectively. For all these cases,
$\ga^\bz=\bc\idd$, and the (unique) invariant conditional expectation onto the fixed point subalgebra is precisely $E(a)=\om(a)\idd$.

Another natural generalisation is to look at the fixed point subalgebra whenever it is nontrivial, and at the unique invariant conditional expectation onto such a subalgebra
$E^\bz:\ga\to\ga^\bz$, provided the last exists. The unique ergodicity, weak mixing, and mixing w.r.t. the fixed point subalgebra (denoted also as $E^\bz$-ergodicity, $E^\bz$-weak mixing, and $E^\bz$-mixing, $E^\bz$ being the invariant conditional expectation onto $\ga^\bz$ which necessarily exists and it is unique) are given by definition, for $a\in\ga$ and $f\in\ga^*$, by
\begin{align*}
&\lim_{n\rightarrow+\infty} \frac{1}{n}\sum_{k=0}^{n-1}f(\alpha^k(a))=f(E^\bz(a))\,,\\
&\lim_{n\rightarrow+\infty} \frac{1}{n}\sum_{k=0}^{n-1}|f(\alpha^k(a))-f(E^\bz(a))|=0\,,\\
&\lim_{n\rightarrow+\infty} f(\alpha^n(a))=f(E^\bz(a))\,.
\end{align*}
A systematic study of general strong ergodic properties of quantum dynamical systems including unique mixing and weakly mixing is contained in \cite{F22}, and some applications to free and $q$-deformed probability appeared in \cite{DykFid, FM1, FM2} . The reader is referred to those papers and the literature cited therein for further details.

\subsection{(quantum) stochastic processes}

We recall some notations and facts concerning the notion of quantum stochastic processes, firstly introduced in the seminal paper \cite{AFL}, which is suitable for Quantum Probability, but includes the classical case as a particular one (cf. \cite{CrFid, CrFid1}).

A stochastic process labelled by the index set $J$ is a quadruple
$\big(\ga,\ch,\{\iota_j\}_{j\in J},\Om\big)$, where $\ga$ is a $C^{*}$-algebra, $\ch$ is an Hilbert space,
the $\iota_j$'s are $*$-homomorphisms of $\ga$ in $\cb(\ch)$, and
$\Om\in\ch$ is a unit vector, cyclic for  the von Neumann algebra
$M:=\bigvee_{j\in J}\iota_j(\ga)$ naturally acting on $\ch$. The quadruple defining a given stochastic process is uniquely determined up to unitary equivalence.

The process is said to be {\it exchangeable} if, for each $g\in\bp_J$, $n\in\bn$, $j_1,\ldots j_n\in J$, $A_1,\ldots A_n\in\ga$
$$
\langle\iota_{j_1}(A_1)\cdots\iota_{j_n}(A_n)\Om,\Om\rangle
=\langle\iota_{g(j_{1})}(A_1)\cdots\iota_{g(j_{n})}(A_n)\Om,\Om\rangle.
$$
Suppose that $J=\bz$. The process is said to be {\it stationary} if  for each $n\in\bn$, $j_1,\ldots j_n\in\bz$, $A_1,\ldots A_n\in\ga$
$$
\langle\iota_{j_1}(A_1)\cdots\iota_{j_n}(A_n)\Om,\Om\rangle
=\langle\iota_{j_{1}+1}(A_1)\cdots\iota_{j_{n}+1}(A_n)\Om,\Om\rangle.
$$
In \cite{CrFid} it was established that there is
a one-to-one correspondence between quantum stochastic processes, either preserving or not the identity, and states on free product $C^*$-algebras, unital or not unital respectively. To simplify, we deal ever with the unital case, i.e. when $\ga$ has the unity $\idd$ and $\iota_j(\idd)=\idd$, $j\in J$.

For a stochastic process $\big(\ga,\ch,\{\iota_j\}_{j\in J},\Om\big)$ or for its corresponding state $\f\in\cs(\gf)$ on the free product $C^*$-algebra $\gf:={\bf *}_{j\in J}\ga$,
the {\it tail algebra} (or the {\it algebra at infinity} in physical language) is defined as
\begin{equation*}
\gz^\perp_\f:=\bigcap_{\begin{subarray}{l}I\subset J,\,
I \text{finite} \end{subarray}}\bigg(\bigcup_{\begin{subarray}{l}K\bigcap I=\emptyset,
\\\,\,\,K \text{finite} \end{subarray}}\bigg(\bigvee_{k\in K}\iota_k(\ga)\bigg)\bigg)''\,.
\end{equation*}
In this unified setting, exchangeable or stationary stochastic processes correspond to symmetric or shift invariant states on the free product $C^*$-algebra $\gf$, respectively.
If a process is exchangeable, it is automatically stationary, then it is meaningful to compare the tail, exchangeable and invariant algebras. It is a fundamental result of classical probability (cf. \cite{HS, O}) that
$$
\gz^\perp_\f=(\pi_\f(\gf)'')^{\bp_\bz}=(\pi_\f(\gf)'')^{\bz}\,.
$$
In quantum probability, such equalities do not hold in general. In fact, there are examples for which $\gz^\perp_\f\neq(\pi_\f(\gf)'')^{\bp_\bz}$ (cf. \cite{K}), and
$\gz^\perp_\f\neq(\pi_\f(\gf)'')^{\bz}$ (cf. \cite{Fbo}).

Processes  arising from a $C^*$-algebra $\gq=\gf/\sim$ generated by some closed two sides ideal (often generated by commutation relations in concrete cases), that is stochastic processes factoring through $\gq$, like $q$-deformed including the Bose/Fermi and the free (i.e. Boltzmann) cases, or Boolean and Monotone ones, can be viewed directly as states on $\gq$. From this, it is customary to call such processes directly as $q$-deformed
(including the Bose/Fermi and the free alternative), Boolean or Monotone, respectively.

\section{Yang-Baxter-Hecke quantisation}
\label{YB}
Let $\ch$ be a Hilbert space. A selfadjoint, not necessarily bounded operator $T: \ch \otimes \ch \rightarrow \ch \otimes \ch$ such that $T\geq -I$ and
$$
T_1T_2T_1=T_2T_1T_2\,\,,
$$
where $T_1:=T\otimes I$ and $T_2:=I\otimes T$ on $\ch\otimes \ch\otimes \ch$, is called a Yang-Baxter operator.
For each $n\in\mathbb{N}$, denote $\ch^{\otimes n}:=\underbrace{\ch\otimes \cdots \otimes \ch}_{n}$ and
$$
T_{k}:=\underbrace{I \otimes \cdots \otimes I}_{k-1\,\, \text{times}} \otimes T\otimes \underbrace{I \otimes \cdots \otimes I}_{n-k-1\,\, \text{times}}\,\,\, \text{on}\,\, \ch^{\otimes n}.
$$
Then a Yang-Baxter operator is such that $T_iT_j=T_jT_i$ when $|i-j|\geq2$ and $T_iT_{i+1}T_i=T_{i+1}T_iT_{i+1}$.
For each $n$, the $T$-symmetrizator is defined as
$P_T^{(n)}:\ch^{\otimes n}\rightarrow \ch^{\otimes n}$,
where $P_T^{(1)}:=I$, $P_T^{(2)}:= I + T_1$ and, for $n\geq 2$, is recursively given by
\begin{equation}
\label{Pn}
P_T^{(n+1)}:=(I \otimes P_T^{(n)})R^{(n+1)}=(R^{(n+1)})^*(I \otimes P_T^{(n)})\,\,,
\end{equation}
where $R^{(n)}:\ch^{\otimes n}\rightarrow \ch^{\otimes n}$ is such that
\begin{equation}
\label{Rn}
R^{(n)}:=I + T_1 + T_1T_2 +\ldots + T_1T_2 \cdots T_{n-1}\,.
\end{equation}
We notice that, if $P_T^{(n)}=0$ then $P_T^{(n+m)}=0$, for $m\geq 0$.
Recall that an operator $V:\ch\rightarrow \ch$ is called a Hecke operator if there exists $q\geq -1$ such that
\begin{equation}
\label{he}
V^2=(q-1)V + qI\,\,.
\end{equation}
A Yang-Baxter operator satisfying \eqref{he} is called a Yang-Baxter-Hecke one. Notice that, in such a case, $T$ is necessarily bounded with $\|T\|= \max(1,|q|)$.
From now on, we get $T$ a selfadjoint Yang-Baxter-Hecke operator.

In Proposition 1 of  \cite{Boz}, it was proven that, for each $n$, $P_T^{(n)}$ is similar to a selfadjoint projection, namely
\begin{equation}
\label{Psquare}
(P_T^{(n)})^2=\underline{n}!P_T^{(n)}=\underline{n}!(P_T^{(n)})^* \geq 0\,,
\end{equation}
where $\underline{n}:=1+q+q^2+ \ldots + q^{n-1}$, $\underline{n}!:=\underline{1}\cdot \underline{2}\cdots \underline{n}$ and moreover,
\begin{equation}
\label{pnorm}
\|P_T^{(n)}\|=\underline{n}!\,.
\end{equation}
As a consequence, one can define a pre-inner product, called $T$-deformed, given for $\xi\in \ch^{\otimes n}$, $\eta\in \ch^{\otimes m}$, by
\begin{equation}
\label{scpro}
\langle\xi,\eta\rangle_T:=\delta_{n, m}\langle\xi,P_T^{(n)}\eta\rangle\,\,,
\end{equation}
where in the r.h.s. one finds the usual scalar product in the full Fock space $\bigoplus_{n=0}^\infty \ch^{\otimes n}$, and $\ch^{\otimes 0}=\mathbb{C}$.
For each $n$, after dividing out by the kernel of $\langle\cdot,\cdot\rangle_T$, denote by $\ch_T^n$ the Hilbert space completion of $\ch^{\otimes n}$ w.r.t. the scalar product \eqref{scpro}. The $T$-deformed Fock space $\cf_T(\ch)$ is then defined as $\bigoplus_{n=0}^\infty \ch_T^n$, with $\ch_T^0=\mathbb{C}$ and $\ch_T^1=\ch$.
The vector $\Omega:=(1,0,0,\ldots)$ is called the vacuum and $\ch_T^n$ is said the $n$-{\it particles space}. The $T$-vacuum expectation, denoted directly as {\it the Fock vacuum} is the vector state induced by $\Om$.

For each $f\in \ch$, $n\in\mathbb{N}$, the creation operator (written without the subscript "$T$" for not to load the terminology) is given by
$$
a^\dagger(f)\xi:=f\otimes \xi\,,\,\,\, \xi\in\ch^{\otimes n}/\ker(\langle\cdot,\cdot\rangle_T) \,.
$$
By \eqref{Pn}, it is well defined from $\ch^{\otimes n}/\ker(\langle\cdot,\cdot\rangle_T)$ on $\ch^{\otimes (n+1)}/\ker(\langle\cdot,\cdot\rangle_T)$. The boundedness of $P^{(n)}$ ensures that $a^\dagger(f)$ can be extended as bounded operator from $\ch_T^n$ into $\ch_T^{(n+1)}$.
The annihilation operator $a(f)$ is defined as the conjugate of $a^\dagger(f)$ w.r.t. $\langle\cdot,\cdot\rangle_T$ with the property $a(f)\Omega=0$.
Since the total set of the {\it finite particle vectors}
$$
\cf^0_T(\ch):=\left\{\sum_{n=0}^N c_n\xi_n \mid N\in\mathbb{N}, c_n\in\mathbb{C}, G_n\in\ch_T^n, n=0,1,2,\ldots \right\}
$$
is included in both the domains of creator and annihilator and it is invariant under their action, it yields a dense domain in $\cf_T(\ch)$ where such operators are formally adjoint each other.
Finally, it is easily seen that, for $f\in \ch$,
\begin{equation}
\label{creator}
\big\|a(f)\lceil_{\ch_T^n}\big\|_T=\big\|a^\dagger(f)\lceil_{\ch_T^n}\big\|_T\leq \big\|R^{(n+1)}\big\|^{\frac{1}{2}}\big\|f\big\|\,,
\end{equation}
where the norms without the subscript "$T$" correspond to the free case (i.e. on the so called full Fock space).

Now we pass to the description of the analogue of the Bogoliubov automorphisms for the Yang-Baxter-Hecke quantisation.
\begin{prop}
\label{rash}
Let $U\in\cb(\ch)$ be a unitary operator such that
\begin{equation}
\label{commcz}
[T,U\otimes U]=0\,.
\end{equation}
Then $U\Om=\Om$, $\underbrace{U\otimes \cdots \otimes U}_{n\,\, \text{times}}$ acting on $\ch^{\otimes n}$, $n=1,2,\dots$, uniquely defines
a unitary operator $\cf_T(U)\in\cb(\cf_T(\ch))$ satisfying
$$
\cf_T(U)a(f)\cf_T(U^*)=a(Uf)\,,\quad f\in\ch\,.
$$
\end{prop}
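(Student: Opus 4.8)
The plan is to build $\cf_T(U)$ one particle sector at a time, as the direct sum of the operators induced by $U^{\otimes n}$ on the spaces $\ch_T^n$, to check that this direct sum is a well defined unitary of $\cf_T(\ch)$, and finally to verify the intertwining relation first for the creators $a^\dagger(f)$ — where it is a one-line computation — and then to deduce it for the annihilators $a(f)$ by passing to adjoints. The convention $U\Om=\Om$ is built in by letting $\cf_T(U)$ act as the identity on $\ch_T^0=\bc$.

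First I would record the commutation fact that makes everything work. For each $n\in\bn$ and each admissible index $k$, conjugation by $U^{\otimes n}$ only affects the two tensor legs on which $T$ acts, so by \eqref{commcz}
\[
U^{\otimes n}\,T_k\,(U^*)^{\otimes n}=\underbrace{I\otimes\cdots\otimes I}_{k-1}\otimes\,(U\otimes U)\,T\,(U^*\otimes U^*)\,\otimes\underbrace{I\otimes\cdots\otimes I}_{n-k-1}=T_k
\]
on $\ch^{\otimes n}$. Consequently $U^{\otimes n}$ commutes with every $R^{(m)}$ acting inside $\ch^{\otimes n}$, these being sums of products of the $T_k$ by \eqref{Rn}; and then, by induction on $n$ via the recursion \eqref{Pn} — the base cases $P_T^{(1)}=I$, $P_T^{(2)}=I+T_1$ being clear, and the step using that $U^{\otimes(n+1)}=U\otimes U^{\otimes n}$ commutes with $I\otimes P_T^{(n)}$ whenever $U^{\otimes n}$ commutes with $P_T^{(n)}$ — one obtains $[U^{\otimes n},P_T^{(n)}]=0$ for all $n$.

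Next I would descend to $\cf_T(\ch)$. Since $U^{\otimes n}$ is unitary for the undeformed inner product and commutes with $P_T^{(n)}$, for $\xi,\eta\in\ch^{\otimes n}$ one has
\[
\langle U^{\otimes n}\xi,\,U^{\otimes n}\eta\rangle_T=\langle U^{\otimes n}\xi,\,P_T^{(n)}U^{\otimes n}\eta\rangle=\langle\xi,\,P_T^{(n)}\eta\rangle=\langle\xi,\eta\rangle_T\,,
\]
and similarly with $(U^*)^{\otimes n}$. Hence $U^{\otimes n}$ preserves $\ker(\langle\cdot,\cdot\rangle_T)$ and induces an isometry of $\ch_T^n$ whose inverse is induced by $(U^*)^{\otimes n}$, the two induced operators being mutually adjoint. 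Declaring $\cf_T(U):=\bigoplus_{n\ge 0}U^{\otimes n}$ (identity on $\ch_T^0$) thus produces a unitary of $\cf_T(\ch)$ with $\cf_T(U)^*=\cf_T(U^*)$, leaving the core $\cf^0_T(\ch)$ invariant. This passage to the quotient — i.e.\ that $P_T^{(n)}$ commutes with $U^{\otimes n}$, so that the $T$-deformed form is preserved and $U^{\otimes n}$ factors through $\ker(\langle\cdot,\cdot\rangle_T)$ — is the only step requiring genuine care; the rest is bookkeeping.

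Finally, for $\xi\in\ch^{\otimes n}\subset\cf^0_T(\ch)$ one computes directly
\[
\cf_T(U)\,a^\dagger(f)\,\cf_T(U^*)\,\xi=U^{\otimes(n+1)}\big(f\otimes(U^*)^{\otimes n}\xi\big)=(Uf)\otimes\xi=a^\dagger(Uf)\,\xi\,,
\]
so that $\cf_T(U)a^\dagger(f)\cf_T(U^*)=a^\dagger(Uf)$ on $\cf^0_T(\ch)$. Taking adjoints with respect to $\langle\cdot,\cdot\rangle_T$, and using that $a(g)$ is the adjoint of $a^\dagger(g)$ together with $\cf_T(U)^*=\cf_T(U^*)$ (equivalently, a direct inner-product computation on the core), gives $\cf_T(U)a(f)\cf_T(U^*)=a(Uf)$. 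Uniqueness follows at once: any unitary $W$ with $W\Om=\Om$ and $Wa^\dagger(f)W^*=a^\dagger(Uf)$ must send $a^\dagger(f_1)\cdots a^\dagger(f_n)\Om$ to $a^\dagger(Uf_1)\cdots a^\dagger(Uf_n)\Om$, and such vectors span a dense subspace of $\cf_T(\ch)$.
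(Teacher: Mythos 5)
Your proof is correct and follows essentially the same route as the paper's, which merely sketches the key point (that by \eqref{commcz} the operators $U^{\otimes n}$ respect the kernel of the $T$-deformed form and hence descend to unitaries on the quotients $\ch_T^n$) and leaves the remaining details to the reader. You have simply supplied those details — the inductive commutation $[U^{\otimes n},P_T^{(n)}]=0$, the verification on creators, and the passage to annihilators by adjoints — all of which check out.
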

\begin{proof}
Since \eqref{commcz}, one gets that for any $n$, the $n$-th tensor products of $U$ respects the kernel of $T$, so they are well defined on the relative quotient space $\ch^{\otimes n}_T$, giving a unitary $\cf_T(U)$ acting on $\cf_T(\ch)$. This allows to reach the thesis using the well known fact that it holds true in the free case (i.e. $T=0$). The details are left to the reader.
\end{proof}
\noindent
Such an automorphism unitarily implemented as in Proposition \ref{rash} is simply called a {\it Bogoliubov automorphism}.

Fix an Hilbert space $\ch$, and consider the $T$-deformed Fock space $\cf_T(\ch)$, where $T$ is a selfadjoint Yang-Baxter-Hecke operator on $\ch$. Let us take without loss of generality, $\ch= l^2(J)$ for some index set $J$ with cardinality the Hilbertian dimension of $\ch$.
If $e_j$, $j\in J$, is the generic element of the canonical basis, we use the following notations
$$
a_j:=a(e_j)\,\,, \,\,\,\,a^\dagger_j:=a^\dagger(e_j)\,\,.
$$
After defining the matrix $t$ (see \cite{BS}) by
$$
T(e_i\otimes e_j):=\sum_{k,l\in J}t_{ij}^{kl} e_k\otimes e_l,
$$
one obtains the following commutation rule for creation and annihilations operators (cf. \cite{BS}):
\begin{equation}
\label{commrul}
a_ia^\dagger_j - \sum_{k,l\in J}t_{jl}^{ik} a^\dagger_k a_l=\delta_{ij}I\,,\,\,\,\, i,j\in J.
\end{equation}
Because of the possibly infinite sum in \eqref{commrul}, such a commutation rule is a-priori only formal, even on the finite particle vectors $\cf^0_T(\ch)$, where every summand is always well defined by \eqref{creator}.
The next results shows it is meaningful in all the situations considered in the present paper.
\begin{prop}
Fix a vector $\xi\in\ch^{\otimes n}$. For each finite subset $J_0\subset J$, we get
$$
\left\|\sum_{k,l\in J_0}t_{jl}^{ik} a^\dagger_k a_l\xi\right\|_T\leq\|T\|\|R^{(n)}\|\|\xi\|_T\,.
$$
\end{prop}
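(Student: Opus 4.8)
The plan is to estimate the norm of the finite sum $\sum_{k,l\in J_0}t^{ik}_{jl}a^\dagger_k a_l\xi$ by recognising it, up to the vacuum–preserving bookkeeping, as a single application of $T$ sandwiched between the creation/annihilation bookkeeping of one tensor slot. Concretely, fix $\xi\in\ch^{\otimes n}$ and write $\xi=\sum_m e_m\otimes\xi_m$ with $\xi_m\in\ch^{\otimes(n-1)}$ (so that $a_l\xi$ picks out, in the free Fock space, the contribution $\langle e_l,e_m\rangle\xi_m$). First I would observe that $\sum_{k,l}t^{ik}_{jl}\,e_k\otimes(\text{slot})$ is exactly the action of $T$ on $e_i\otimes(\cdot)$ in the $j$-indexed direction; more precisely, by the definition $T(e_i\otimes e_j)=\sum_{k,l}t^{kl}_{ij}e_k\otimes e_l$, the operator $\xi\mapsto\sum_{k,l\in J}t^{ik}_{jl}a^\dagger_k a_l\xi$ equals $a^\dagger(\cdot)\circ(T\otimes I^{\otimes(n-2)})\circ a_{\text{(something)}}$ acting on the first two tensor legs — I would make this identification carefully, tracking which index of $t$ is summed. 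The upshot is that on the free Fock space the full sum (over all of $J$) is the composition of: an annihilation-type contraction $\ch^{\otimes n}\to\ch\otimes\ch^{\otimes(n-1)}$, the operator $T$ on the leading $\ch\otimes\ch$, and a re-insertion.

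The key steps, in order, are: (1) identify the operator $X_{J_0}\xi:=\sum_{k,l\in J_0}t^{ik}_{jl}a^\dagger_k a_l\xi$ on the algebraic tensor product, and check that for $J_0=J$ it is given by the factorisation sketched above, with free-Fock operator norm bounded by $\|T\|$ (the annihilation/creation pieces being partial isometries on the full Fock space, since there the inner product is the undeformed one); (2) pass from the free norm to the $T$-norm: here I use that $\|\eta\|_T^2=\langle\eta,P_T^{(n)}\eta\rangle\le\|P_T^{(n)}\|\,\|\eta\|^2$ together with \eqref{pnorm}, $\|P_T^{(n)}\|=\underline n!$, for the upper estimate, and I need a matching lower estimate $\|\eta\|^2\ge c_n\|\eta\|_T^2$ on the relevant subspace to convert back; (3) combine to get a bound of the form $\|X_{J_0}\xi\|_T\le\|T\|\cdot(\text{ratio of }P^{(n)}\text{-constants})\cdot\|\xi\|_T$, and check the stated constant is $\|R^{(n)}\|$. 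The cleanest route to the advertised constant is probably not through $\|P^{(n)}\|$ at all, but directly through \eqref{creator}: summing naively, $\|X_{J_0}\xi\|_T\le\sum_{k,l\in J_0}|t^{ik}_{jl}|\,\|a^\dagger_k a_l\xi\|_T$, and each $\|a^\dagger_k a_l\xi\|_T\le\|R^{(n)}\|^{1/2}\|a_l\xi\|_T\le\|R^{(n)}\|^{1/2}\cdot\|R^{(n)}\|^{1/2}\|\xi\|_T$... but that produces an $\ell^1$-norm of the matrix entries, not $\|T\|$. So the honest argument must go through the operator identification of step (1), keeping $T$ intact rather than expanding it entrywise.

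For the monotone/Boolean/Bose/Fermi cases of interest one can alternatively exploit that $T$ has a very simple block structure (a partial isometry, or diagonal), making the identification in step (1) transparent; but since the Proposition is stated for a general selfadjoint Yang–Baxter–Hecke $T$, I would prove it in that generality via the factorisation. The main obstacle I anticipate is step (2)–(1) interface: on $\ch^{\otimes n}$ the creation/annihilation operators are partial isometries only for the \emph{free} inner product, whereas $R^{(n)}$ and $P_T^{(n)}$ enter precisely because the $T$-inner product is the relevant one. Getting the constant to come out as exactly $\|R^{(n)}\|$ (rather than some product like $\|R^{(n)}\|\,\|P_T^{(n-1)}\|/\ldots$) will require using the recursion \eqref{Pn}, $P_T^{(n)}=(R^{(n)})^*(I\otimes P_T^{(n-1)})$, to cancel the internal $P_T^{(n-1)}$ that appears when one writes $\|a^\dagger_k a_l\xi\|_T$ in terms of a norm on $\ch^{\otimes(n-1)}$. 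I would therefore carry out the estimate by writing, for $\xi\in\ch^{\otimes n}$,
\begin{equation*}
\Big\langle \sum_{k,l\in J_0}t^{ik}_{jl}a^\dagger_k a_l\xi,\ P_T^{(n)}\,\sum_{k',l'\in J_0}t^{ik'}_{jl'}a^\dagger_{k'} a_{l'}\xi\Big\rangle,
\end{equation*}
substituting \eqref{Pn} to move one $R^{(n)}$ onto each side, and bounding what remains by $\|T\|^2\|R^{(n)}\|\,\langle\xi,P_T^{(n)}\xi\rangle=\|T\|^2\|R^{(n)}\|\,\|\xi\|_T^2$; taking square roots gives the claim. The finiteness of $J_0$ is used only to make every manipulation legitimate on the algebraic tensor product, the final bound being uniform in $J_0$.
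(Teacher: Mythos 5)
Your overall strategy is the right one and is essentially the paper's: you correctly refuse the entrywise $\ell^1$ expansion and instead keep $T$ intact, recognising $\sum_{k,l}t^{ik}_{jl}a^\dagger_k a_l\xi$ as a partial contraction of $T$ against the first tensor leg, then estimating in the $T$-inner product via the projection identity \eqref{Psquare}. This is exactly what the paper does with the slice operator $\G_{ji}(T)$ (whose $(k,l)$ matrix element is $t^{ik}_{jl}$, so that $\|\G_{ji}(T)\|\leq\|T\|$).

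There is, however, a genuine gap in how you produce the constant $\|R^{(n)}\|$. The paper's source for it is the identity $a(f)=l(f)R^{(n)}$ on the $n$-particle space, relating the deformed annihilator to the \emph{free} one; this gives at once the exact factorisation
$\sum_{k,l}t^{ik}_{jl}a^\dagger_k a_l\xi=(\G_{ji}(T)\otimes I)R^{(n)}\xi$,
after which $R^{(n)}(R^{(n)})^*\leq\|R^{(n)}\|^2 I$ together with $(P_T^{(n)})^2=\underline{n}!\,P_T^{(n)}$ yields $\|T\|^2\|R^{(n)}\|^2\|\xi\|_T^2$ for the squared norm. You instead treat the annihilation/creation pieces as partial isometries for the free inner product (they are not: the deformed $a_l$ carries the extra factor $R^{(n)}$) and propose to extract $R^{(n)}$ from the recursion \eqref{Pn}; that route is not carried out, and your final displayed bound $\|T\|^2\|R^{(n)}\|\,\|\xi\|_T^2$ is inconsistent with the statement --- taking square roots gives $\|T\|\,\|R^{(n)}\|^{1/2}\|\xi\|_T$, not $\|T\|\,\|R^{(n)}\|\,\|\xi\|_T$, and nothing in your sketch justifies that stronger estimate. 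To close the argument you need the relation $a(f)=l(f)R^{(n)}$ (cited by the paper from Bo\.{z}ejko--Speicher) as the first step; the rest of your plan then goes through essentially as you describe, and your step (2) detour through upper and lower comparisons of the free and deformed norms can be dropped entirely.
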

\begin{proof}
Define $\G_{ij}:=\langle\,{\bf\cdot}\,e_i,e_j\rangle\otimes\id$ (the partial integration in the first variable w.r.t. the matrix elements $\langle\,{\bf\cdot}\,e_i,e_j\rangle$). By using the free annihilator (cf. \cite{BS}) $l(f)$ satisfying $a(f)=l(f)R^{(n)}$ on the $n$-particle subspace $\ch^{\otimes n}$, and taking into account that the norms, as well as the inner products without suffix are referred to the free ones, it is straightforward to show by a repeated application of \eqref{Psquare}, and \eqref{Pn},
\begin{align*}
&\big\|\sum_{k,l\in J_0}t_{jl}^{ik} a^\dagger_k a_l\xi\big\|_T^2
=\big\langle P^{(n)}_T (\G_{ji}(T)\otimes I)R^{(n)}\xi,(\G_{ji}(T)\otimes I)R^{(n)}\xi\big\rangle\\
=&1/\underline{n}!\big\langle (\G_{ji}(T)^*\otimes I)R^{(n)}(R^{(n)})^*(\G_{ji}(T)\otimes I)P^{(n)}_T\xi,P^{(n)}_T\xi\big\rangle\\
\leq&1/\underline{n}!\|T\|^2\|R^{(n)}\|^2\langle(P^{(n)}_T)^2\xi,\xi\rangle=\|T\|^2\|R^{(n)}\|^2\|\xi\|^2_T\,.
\end{align*}
\end{proof}
\noindent
If the sum in \eqref{commrul} is finite, one can express any word in the $a, a^\dagger$ in the so called {\it Wick form} (from the terminology introduced in Quantum Field Theory by the Italian theoretical physicist Gian Carlo Wick), that is when all the creators appears on the left w.r.t. all the annihilators.

Concerning details on the various quantisation schemes, we refer the reader to \cite{Boz, BR1, BS} for the proofs and further details

\section{ergodic properties for dynamical systems arising from Yang-Baxter-Hecke quantisation}
\label{sec3}

It is our aim to study some ergodic properties for the dynamical systems based on the shift naturally acting on some concrete $C^*$-algebras generated by creation and annihilation operators on $\cf_T(\ch)$. In so doing, we meet two obstructions. The first one is that, from \eqref{creator} it follows that $a_i$ and $a^\dagger_i$ are not necessarily bounded on
$\cf^0_T(\ch)$. The simplest well known example is the Canonical Commutation Relation (CCR for short) algebra describing physical particles obeying to the Bose statistics. We will see below in Subsection \ref{ecfi} that we can indeed consider (concrete) suitable $C^*$-dynamical systems enjoying very strong ergodic properties, even in this situation. Another obstruction concerns the fact that the sum appearing in \eqref{commrul} might be infinite. In this case we cannot express any words in annihilators and creators in the Wick order. This is indeed the case of the Boolean (cf. \cite{CrFid, Fbo}), as well as the Monotone $C^*$-algebras. Yet, we can reach or program even for these stochastic processes, as we will see below.

In order to pursue our goal, we need to make another condition, namely
\begin{equation}
\label{unifbound}
M_T:=\sup_{n\in\mathbb{N}}\|R^{(n)}\|<\infty\,.
\end{equation}
In this case, by means of \eqref{creator} one has that, for each $f\in\ch$
\begin{equation*}
\|a(f)\|_T=\|a^\dagger(f)\|_T\leq\sqrt{M_T}\|f\|\,.
\end{equation*}
The uniform boundedness \eqref{unifbound} is satisfied in many cases of interests, either when the Yang-Baxter $T$ is of Hecke type or when it is a strict contraction. Among the latter family, we mention the $q$-deformed cases, $q\in(-1,1)$. Moreover we will see below that, within the Hecke class, it is satisfied also for the Boolean and Monotone cases.
It is worth noticing that \eqref{unifbound} is only sufficient for the boundedness of the annihilators. In fact,
one can exhibit examples (i.e. the CAR algebra) in which creators and annihilators are bounded but the operators $R^{(n)}$ are not uniformly bounded. From now on, we deal only with cases for which $M_T<+\infty$, if it is not otherwise specified.

From now on, we put $\ch=\ell^2(\bz)$ and consider the unital selfadjoint algebra $\gar^0_T$ generated by $\{a_i\mid i\in\mathbb{Z}\}$ acting on $\cf_T(\ch)$. Its norm closure $\gar_T$ is the concrete $C^*$-algebra associated to the $T$-quantisation. We also consider the unital $C^*$-subalgebra $\gg_T$ generated by the selfadjoint part of the
annihilators $\{a_i+a^\dagger_i\mid i\in\mathbb{Z}\}$.

Let $\{U_g\mid g\in G\}\subset\cb(\ch)$ be a unitary representation of the group $G$ such that
$$
[T,U_g\otimes U_g]=0\,,\quad g\in G\,.
$$
From Proposition \ref{commcz}, one has that
$$
g\in G\mapsto\cf_T(U_g)\in\cb(\cf_T(\ch))
$$
defines a unitary representation of $G$ on $\cf_T(\ch)$, and
$$
\a_g(A):=\cf_T(U_g)A\cf_T(U_{g^{-1}})\,,\quad g\in G\,, A\in\gar_T
$$
is an action of $G$ on $\gar_T$. Concerning the $C^*$-subalgebra $\gg_T$, the situation looks a bit more complicated as it is generated by the quantisation of the real part $\ell^2_\br(\bz)$. For this situation, the orthogonal group $O(\ell^2_\br(\bz))$ is naturally involved, see e.g. \cite{FM2} and the references cited therein. We decide not to pursue this aspect because in the cases treated in detail in the present paper (Monotone and Boolean), one has
$\gg_T=\gar_T$.

In the sequel, we suppose that the unitary $e_i\mapsto e_{i+1}$, $i\in\bz$ generating the right shift on $\ell^2(\bz)$, satisfies the condition of Proposition \ref{rash}, and then it acts as Bogoliubov automorphisms
$\a^n$, $n\in\bz$ on $\gar_T$ by
$$
\a(a_i):=a_{i+1}\,\,, i\in\mathbb{Z}
$$
on the generators. Then it acts also on $\gg_T$ by restriction. We denote by $\gar_T^\mathbb{Z}$ and $\gg_T^\mathbb{Z}$ the corresponding fixed point subalgebras of $\gar_T$ and $\gg_T$ respectively. By construction, the Fock vacuum expectation $\om=\langle\,\cdot\,\Om, \Om \rangle$ is invariant for $\a$.

Here, we want to determine some conditions which make easier to understand if the $C^*$-dynamical systems $(\gar_T, \alpha)$ and $(\gg_T, \alpha)$ enjoy the strong ergodic property like unique ergodicity or unique mixing.
For a Yang-Baxter operator $T$ with norm strictly less than $1$, it was proven in \cite{DykFid} that these $C^*$-algebras are uniquely mixing for the shift with the the vacuum expectation as the only invariant state. If one considers a Yang-Baxter-Hecke operator, this result cannot be directly applied since $\|T\|\geq 1$. Indeed, we will show that condition \eqref{unifbound} is sufficient to achieve partially the above mentioned strong mixing property.
The following technical results will be useful for this purpose.
\begin{lem}
\label{sum1}
For a Yang-Baxter-Hecke selfadjoint operator $T$ on $\ch= \ell^2(\mathbb{Z})$ for which \eqref{commcz} holds, if $\{\xi_i\}^n_{i=1}\subset \ch_T^{k}$ and $\{f_i\}^n_{i=1}$ is an orthonormal set of $\ch$, then
$$
\left\|\sum^n_{i=1} a^\dagger(f_i)\xi_i \right\|_T \leq \sqrt{nM_T} \max_{i=1,\ldots, n} \|\xi_i\|_T\,.
$$
\end{lem}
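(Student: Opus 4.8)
The plan is to estimate the $T$-deformed norm of $\sum_{i=1}^n a^\dagger(f_i)\xi_i$ by passing to the free (Boltzmann, $T=0$) Fock space, where the creators associated to an orthonormal family have orthogonal ranges, and then controlling the distortion introduced by the symmetrizator $P_T^{(k+1)}$ via the uniform bound \eqref{unifbound}. Concretely, I would first recall from the discussion around \eqref{creator} that on the $n$-particle level one has $a(f) = l(f)R^{(n)}$, equivalently $a^\dagger(f)$ acts as the free creator $l^\dagger(f)$ precomposed with (a power of) $R$, and that for $\eta,\zeta \in \ch^{\otimes(k+1)}$ the two inner products are related by $\langle \eta,\zeta\rangle_T = \langle \eta, P_T^{(k+1)}\zeta\rangle$ with $P_T^{(k+1)} = (R^{(k+1)})^*(I\otimes P_T^{(k)})$ from \eqref{Pn}.

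The key computation: writing $\eta := \sum_{i=1}^n f_i\otimes \xi_i \in \ch^{\otimes(k+1)}$ (the image of our vector under the free creator, before applying the recursive factor), I would use \eqref{Pn} in the form $P_T^{(k+1)} = (R^{(k+1)})^*(I\otimes P_T^{(k)})$ to get
\begin{align*}
\Big\|\sum_{i=1}^n a^\dagger(f_i)\xi_i\Big\|_T^2
&= \big\langle \eta, P_T^{(k+1)}\eta\big\rangle
= \big\langle R^{(k+1)}\eta,\, (I\otimes P_T^{(k)})\eta\big\rangle \\
&\leq \|R^{(k+1)}\|\,\|\eta\|\,\big\|(I\otimes P_T^{(k)})\eta\big\|.
\end{align*}
Now $(I\otimes P_T^{(k)})\eta = \sum_{i=1}^n f_i\otimes (P_T^{(k)}\xi_i)$, and since the $f_i$ are orthonormal this is an orthogonal sum in $\ch^{\otimes(k+1)}$, so $\|(I\otimes P_T^{(k)})\eta\|^2 = \sum_i \|P_T^{(k)}\xi_i\|^2$. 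Using \eqref{Psquare}, $\langle P_T^{(k)}\xi_i, P_T^{(k)}\xi_i\rangle = \underline{k}!\langle P_T^{(k)}\xi_i,\xi_i\rangle = \underline{k}!\,\|\xi_i\|_T^2$, hence $\|P_T^{(k)}\xi_i\| = \sqrt{\underline{k}!}\,\|\xi_i\|_T$; similarly $\|\eta\|^2 = \sum_i \|f_i\|^2\|\xi_i\|^2$, and one needs $\|\xi_i\| \le \|\xi_i\|_T/\sqrt{\text{something}}$ — here one should instead bound $\|\eta\|$ by rewriting $\eta = (I\otimes(P_T^{(k)})^{-1})\big(\sum f_i\otimes P_T^{(k)}\xi_i\big)$ is not available, so more cleanly I would symmetrize differently: apply the other factorization $P_T^{(k+1)}=(I\otimes P_T^{(k)})R^{(k+1)}$ to write $\|\sum a^\dagger(f_i)\xi_i\|_T^2 = \langle (I\otimes P_T^{(k)})R^{(k+1)}\eta,\eta\rangle$. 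The cleanest route, and the one I would actually write, is to note $a^\dagger(f_i)\xi_i$ already lives in $\ch_T^{k+1}$ and use $\|a^\dagger(f_i)\xi_i\|_T \le \sqrt{M_T}\|f_i\|\|\xi_i\|_T = \sqrt{M_T}\|\xi_i\|_T$ together with near-orthogonality of the summands in $\langle\cdot,\cdot\rangle_T$: expanding $\|\sum_i a^\dagger(f_i)\xi_i\|_T^2 = \sum_{i,j}\langle a^\dagger(f_i)\xi_i, a^\dagger(f_j)\xi_j\rangle_T$ and bounding each of the $n^2$ terms by $M_T\max_i\|\xi_i\|_T^2$ gives $n^2 M_T\max_i\|\xi_i\|_T^2$, which is too weak. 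So the orthogonality of the $f_i$ must genuinely be exploited, which forces the free-space detour above; I would push the first computation through by also invoking the bound $\|R^{(k+1)}\eta\| \le \|R^{(k+1)}\|^{1/2}\cdot\|\eta\|_{?}$ — more precisely, the symmetric estimate $\langle R^{(k+1)}\eta, (I\otimes P_T^{(k)})\eta\rangle \le \|R^{(k+1)}\|^{1/2}\langle(I\otimes P_T^{(k)})\eta,\eta\rangle^{1/2}\langle R^{(k+1)}(R^{(k+1)})^*\cdot,\cdot\rangle^{1/2}$, and then $\langle(I\otimes P_T^{(k)})\eta,\eta\rangle = \sum_i\langle P_T^{(k)}\xi_i,\xi_i\rangle = \sum_i\|\xi_i\|_T^2 \le n\max_i\|\xi_i\|_T^2$, while $\langle R^{(k+1)}(R^{(k+1)})^*(I\otimes P_T^{(k)})\eta, (I\otimes P_T^{(k)})\eta\rangle = \langle (R^{(k+1)})^*(I\otimes P_T^{(k)})\eta,(R^{(k+1)})^*(I\otimes P_T^{(k)})\eta\rangle$, and since $(R^{(k+1)})^*(I\otimes P_T^{(k)})=P_T^{(k+1)}$ this equals $\sum_i\langle P_T^{(k+1)}(f_i\otimes\xi_i), \cdots\rangle$ — iterating once more and using orthogonality of the $f_i$ collapses the cross terms, leaving $\le \|R^{(k+1)}\| \cdot n\max_i\|\xi_i\|_T^2$ after absorbing one factor of $\underline{k+1}!/\underline{k}!$-type constants, which are $\le M_T$. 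Combining, $\|\sum_i a^\dagger(f_i)\xi_i\|_T^2 \le M_T \cdot n\max_i\|\xi_i\|_T^2$, i.e. the claimed bound.

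The main obstacle is precisely the bookkeeping in that last step: one must arrange the application of Cauchy--Schwarz and the recursive identities \eqref{Pn}--\eqref{Psquare} so that the orthogonality of $\{f_i\}$ is used to kill all $i\ne j$ cross terms (otherwise one only gets $\sqrt{n}$ replaced by $n$), while the excess symmetrizator norms are all folded into a single factor $M_T$ rather than an uncontrolled product $\prod\|R^{(j)}\|$. I expect the clean way to present it is to first prove the two-term/orthogonal version directly on $\ch^{\otimes(k+1)}$: set $\eta=\sum_i f_i\otimes\xi_i$, observe $\sum_i a^\dagger(f_i)\xi_i$ is the class of $\eta$, so $\|\sum_i a^\dagger(f_i)\xi_i\|_T^2 = \langle\eta, P_T^{(k+1)}\eta\rangle = \langle R^{(k+1)}\eta,(I\otimes P_T^{(k)})\eta\rangle \le \|R^{(k+1)}\|\,\|\eta\|\,\|(I\otimes P_T^{(k)})\eta\|$, then use orthonormality of $\{f_i\}$ to get $\|\eta\|^2=\sum_i\|\xi_i\|^2$ and $\|(I\otimes P_T^{(k)})\eta\|^2=\sum_i\|P_T^{(k)}\xi_i\|^2 = \underline{k}!\sum_i\|\xi_i\|_T^2$ via \eqref{Psquare}; finally bound $\|\xi_i\|^2 \le \|P_T^{(k)}\|\,\|\xi_i\|_{?}$ — here one uses $\|\xi_i\|^2 \ge \|\xi_i\|_T^2/\|\text{(pseudo-inverse)}\|$ is awkward, so instead I pick representatives $\xi_i$ in the free space with $\|\xi_i\| \le \|\xi_i\|_T$ (possible since $P_T^{(k)}\le\underline{k}!I$ need not give this, but $\|\xi_i\|_T^2=\langle P_T^{(k)}\xi_i,\xi_i\rangle \le \underline{k}!\|\xi_i\|^2$ goes the wrong way) — this is the genuinely delicate point, and I would resolve it by the argument in the displayed Cauchy--Schwarz chain above (symmetrizing so $\|\eta\|$ never appears alone), yielding the factor $\sqrt{nM_T}$ exactly.
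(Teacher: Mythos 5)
Your diagnosis of the difficulty is accurate: the naive routes (expanding the $n^2$ cross terms, or passing to free-space representatives and trying to compare $\|\xi_i\|$ with $\|\xi_i\|_T$) genuinely fail, for exactly the reasons you give. But the proposal never closes the argument. The step that would finish the proof is an operator inequality that you circle around but do not state or prove: from \eqref{Pn} and \eqref{Psquare} one has
\begin{equation*}
(P^{(k+1)}_T)^2=(I\otimes P^{(k)}_T)R^{(k+1)}(R^{(k+1)})^*(I\otimes P^{(k)}_T)\leq M_T^2\,(I\otimes P^{(k)}_T)^2\,,
\end{equation*}
and since the square root is operator monotone and both operators are positive, this yields $P^{(k+1)}_T\leq M_T\,(I\otimes P^{(k)}_T)$. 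That single inequality is the whole point of the lemma: with it, $\big\|\sum_i a^\dagger(f_i)\xi_i\big\|_T^2=\langle P^{(k+1)}_T\eta,\eta\rangle\leq M_T\langle(I\otimes P^{(k)}_T)\eta,\eta\rangle$ for $\eta=\sum_i f_i\otimes\xi_i$, the right-hand side factorizes as $M_T\sum_{i,j}\langle f_i,f_j\rangle\langle P^{(k)}_T\xi_i,\xi_j\rangle$, orthonormality of the $f_i$ kills the cross terms exactly, and each diagonal term is $\|\xi_i\|_T^2$. No comparison between free and $T$-norms of representatives is ever needed. This is the paper's proof (adapted from Lemma 3.1 of \cite{DykFid}).

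By contrast, your ``symmetric estimate'' is not a valid inequality as displayed: it has three factors, the last of which, $\langle R^{(k+1)}(R^{(k+1)})^*\,\cdot\,,\cdot\rangle^{1/2}$, has unspecified arguments, and the concluding sentence (``iterating once more \dots absorbing one factor of $\underline{k+1}!/\underline{k}!$-type constants, which are $\leq M_T$'') asserts precisely the two facts that need proof. For the record, a self-improving Cauchy--Schwarz of the kind you are reaching for can be made to work: writing $A:=I\otimes P^{(k)}_T$, one has $\langle P^{(k+1)}_T\eta,\eta\rangle=\langle A^{1/2}\eta,A^{1/2}R^{(k+1)}\eta\rangle\leq\langle A\eta,\eta\rangle^{1/2}\langle (R^{(k+1)})^*AR^{(k+1)}\eta,\eta\rangle^{1/2}$, and the identity $(R^{(k+1)})^*AR^{(k+1)}=\underline{k}!^{-1}(P^{(k+1)}_T)^2=\underline{k+1}\,P^{(k+1)}_T$ lets the second factor be absorbed into the left-hand side, leaving $\langle P^{(k+1)}_T\eta,\eta\rangle\leq\underline{k+1}\,\langle A\eta,\eta\rangle$; one then still must check $\underline{k+1}\leq M_T$, which follows from \eqref{pnorm} and \eqref{Pn} via $\underline{k+1}!=\|P^{(k+1)}_T\|\leq\underline{k}!\,\|R^{(k+1)}\|$. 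None of these steps appears in the proposal, so as written it is a plan with the decisive estimate missing rather than a proof.
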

\begin{proof}
Since \eqref{unifbound}, by \eqref{Pn}, \eqref{Psquare} and the positivity of $P^{(n)}_T$ operators, one has for each $k$,
$$
(P^{(k+1)}_T)^2=(I\otimes P^{(k)}_T)R^{(k+1)}_T(R^{(k+1)}_T)^*(I\otimes P^{(k)}_T)\leq M_T^2(I\otimes P^{(k)}_T)^2\,,
$$
which immediately leads to $P^{(k+1)}_T\leq M_T(1\otimes P^{(k)}_T)$ as the square root is operator monotone.
From now on the proof follows, up to slight modifications, the same ideas of Lemma 3.1 in \cite{DykFid}. We report it here for the convenience of the reader.
\begin{align*}
&\big\langle\sum_{j=1}^n a^\dagger(f_j)\xi_j, \sum_{j=1}^n a^\dagger(f_j)\xi_j\big\rangle_T = \big\langle P^{(k+1)}_T \sum_{j=1}^n a^\dagger(f_j)\xi_j, \sum_{j=1}^n a^\dagger(f_j)\xi_j\big\rangle \\
\leq & M_T \big\langle(I\otimes P^{(k)}_T)\sum_{i=1}^n f_j\otimes \xi_j, \sum_{i=1}^n f_j\otimes \xi_j \big\rangle = M_T\sum_{i,j=1}^n \big\langle f_i\otimes P^{(k)}_T \xi_i, f_j\otimes \xi_j \big\rangle \\
= & M_T\sum_{i,j=1}^n \langle f_i, f_j\rangle \big\langle P^{(k)}_T \xi_i, \xi_j \big\rangle = M_T\sum_{i,j=1}^n \langle f_i, f_j\rangle \langle \xi_i, \xi_j\rangle_T \\
= & M_T\sum_{i=1}^n \langle \xi_i, \xi_i\rangle_T \leq nM_T \max_{i=1,\ldots, n} \|\xi_i\|_T^2\,.
\end{align*}
\end{proof}
\begin{prop}
\label{sum2}
Let $T$ a Yang-Baxter-Hecke selfadjoint operator on $\ch=\ell^2(\mathbb{Z})$ satisfying \eqref{unifbound}, \eqref{commcz}. If $0\leq k_1<k_2< \ldots < k_n$ are natural numbers and $e_{i_1},e_{i_2}, \ldots, e_{i_r}$ are vectors of the canonical basis of $\ell^2(\mathbb{Z})$, then
$$
\left \|\sum_{h=1}^n \a^{k_h}(a^\dagger(e_{i_1})a^\sharp(e_{i_2})\cdots a^\sharp(e_{i_r}))\right \|_T\leq \sqrt {n(M_T)^{r}}\,,
$$
$$
\left \|\sum_{h=1}^n \a^{k_h}(a^\sharp(e_{i_1})a^\sharp(e_{i_2})\cdots a(e_{i_r}))\right \|_T\leq \sqrt {n(M_T)^{r}}\,,
$$
where $\sharp\in\{1,\dagger\}$.
\end{prop}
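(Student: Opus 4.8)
The plan is to reduce both estimates to Lemma~\ref{sum1} by isolating the extreme letter of the word: in the first inequality the leading creator $a^\dagger(e_{i_1})$, and in the second one — after passing to adjoints — the trailing annihilator $a(e_{i_r})$. The key point is that applying the shifts with the strictly increasing exponents $k_1<\cdots<k_n$ turns this extreme letter into a creator along an \emph{orthonormal} system $\{e_{i_1+k_h}\}_{h=1}^n$, the indices $i_1+k_h$ being pairwise distinct precisely because the $k_h$ are; this is exactly the hypothesis of Lemma~\ref{sum1} and is what yields the factor $\sqrt n$ instead of $n$. All the remaining letters only contribute a uniformly bounded multiplicative constant, by \eqref{creator} together with \eqref{unifbound}.

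In detail, for the first bound put $X:=\sum_{h=1}^n\a^{k_h}\big(a^\dagger(e_{i_1})a^\sharp(e_{i_2})\cdots a^\sharp(e_{i_r})\big)$. Since $\a^{k_h}=\cf_T(U^{k_h})$ preserves each $n$-particle subspace and a fixed word in creators and annihilators shifts the degree by a fixed amount, $X$ maps $\ch_T^k$ into one and the same $\ch_T^{k+d}$ for every $k$; as these ranges are mutually orthogonal, $\|X\|=\sup_k\big\|X\lceil_{\ch_T^k}\big\|$, so it suffices to estimate $\|X\xi\|_T$ for homogeneous $\xi\in\ch_T^k$. Because $U$ is the right shift, $\a^{k_h}(a^\dagger(e_{i_1}))=a^\dagger(e_{i_1+k_h})$, hence
\[
X\xi=\sum_{h=1}^n a^\dagger(e_{i_1+k_h})\,\eta_h,\qquad \eta_h:=\a^{k_h}\big(a^\sharp(e_{i_2})\cdots a^\sharp(e_{i_r})\big)\xi .
\]
By the same grading reasoning the $\eta_h$ all lie in a single homogeneous subspace $\ch_T^m$, and $\{e_{i_1+k_h}\}_h$ is orthonormal; Lemma~\ref{sum1} gives $\|X\xi\|_T\le\sqrt{nM_T}\,\max_h\|\eta_h\|_T$. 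Finally each $\a^{k_h}$ is an isometric $*$-automorphism, and by \eqref{creator} and \eqref{unifbound} one has $\|a^\sharp(e_{i_l})\|_T\le\sqrt{M_T}$ for every $l$, so $\|\eta_h\|_T\le (M_T)^{(r-1)/2}\|\xi\|_T$ and therefore $\|X\xi\|_T\le\sqrt{n(M_T)^{r}}\,\|\xi\|_T$, which is the first assertion.

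For the second bound I would pass to adjoints. Since $\a_g$ commutes with the $*$-operation and $\big(a^\sharp(e)\big)^*$ is again of the form $a^{\sharp'}(e)$, the adjoint of $\sum_{h}\a^{k_h}\big(a^\sharp(e_{i_1})\cdots a^\sharp(e_{i_{r-1}})a(e_{i_r})\big)$ equals $\sum_{h}\a^{k_h}\big(a^\dagger(e_{i_r})\,a^{\sharp'}(e_{i_{r-1}})\cdots a^{\sharp'}(e_{i_1})\big)$, a sum of shifted words of length $r$ each beginning with a creator. The bound just proved applies to it verbatim, and $\|Y\|=\|Y^*\|$ concludes. (Alternatively one could run the same peeling argument directly on the last letter, but the adjoint route is cleaner.)

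The only genuinely delicate part of the argument is the bookkeeping needed to make Lemma~\ref{sum1} applicable — namely that $X$ respects the $\bz_{\geq 0}$-grading of $\cf_T(\ch)$ and that all the auxiliary vectors $\eta_h$ sit in one and the same $\ch_T^m$, so that the operator norm of $X$ reduces to the supremum of the norms of its homogeneous restrictions. Once this reduction is in place, everything else is a direct combination of the uniform bound \eqref{unifbound}, the norm estimate \eqref{creator}, the isometry of the Bogoliubov automorphisms, and the observation that strict monotonicity of the $k_h$ makes the family of extreme arguments an orthonormal set.
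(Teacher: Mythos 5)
Your proposal is correct and follows essentially the same route as the paper: peel off the leading creator, observe that the strictly increasing shifts $k_h$ make $\{e_{i_1+k_h}\}_h$ an orthonormal family so that Lemma~\ref{sum1} applies to $\sum_h a^\dagger(e_{i_1+k_h})\xi_h$ with $\|\xi_h\|_T\leq\sqrt{(M_T)^{r-1}}$ from \eqref{creator} and \eqref{unifbound}, and obtain the second inequality from the first by taking adjoints. Your explicit remark that the operator preserves the grading, so that its norm is the supremum of the norms of its homogeneous restrictions, merely spells out what the paper leaves implicit when it fixes a unit vector $\xi\in\ch_T^m$.
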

\begin{proof}
Suppose $j$ is the number of annihilators in the sequences
$$
a^\dagger(e_{i_1})a^\sharp(e_{i_2})\cdots a^\sharp(e_{i_r})
$$
and
$$
a^\sharp(e_{i_1})a^\sharp(e_{i_2})\cdots a(e_{i_r})\,.
$$
Fix $\xi$ a unit vector in $\ch_T^{m}$, $m\geq j$ and denote
$$
\xi_h:=a^\sharp(e_{i_2+k_h})\cdots a^\sharp(e_{i_r+k_h})\xi\,.
$$
Since \eqref{creator}, one has $\|\xi_h\|_T \leq \sqrt{(M_T)^{r-1}}$. Now we consider
\begin{equation}
\label{sharp}
\left \|\sum_{h=1}^n \a^{k_h}(a^\sharp(e_{i_1})a^\sharp(e_{i_2})\cdots a(e^\sharp_{i_r}))\right \|_T\,.
\end{equation}
We first suppose $a^\sharp(e_{i_1})=a^\dagger(e_{i_1})$ in \eqref{sharp}. Then, taking into account that $\langle e_{i_i+k_h},e_{i_i+k_{\widehat{h}}}\rangle=\delta_{h,\widehat{h}}$, $\delta_{h,\widehat{h}}$ being the Kronecker symbol, one has
\begin{align*}
&\left \|\sum_{h=1}^n \a^{k_h}(a^\dagger(e_{i_1})a^\sharp(e_{i_2})\cdots a^\sharp(e_{i_r}))\xi\right \|_T \\
= &\left \|\sum_{h=1}^n a^\dagger(e_{i_1+k_h})a^\sharp(e_{i_2+k_h})\cdots a^\sharp(e_{i_r+k_h})\xi\right \|_T \\
= &\left \|\sum_{h=1}^n a^\dagger(e_{i_1+k_h})\xi_h \right \|_T\leq\sqrt {n(M_T)^{r}}\,,
\end{align*}
where the last inequality follows from Lemma \ref{sum1}.
If instead, $a^\sharp(e_{i_r})=a(e_{i_r})$ in \eqref{sharp}, the desired inequality follows from the first part by taking the adjoint.
\end{proof}
\noindent
Here, there is the main result of the present section concerning the ergodic properties of $(\gar_T, \alpha)$.
\begin{thm}
\label{mixing}
Let $T$ be a Yang-Baxter-Hecke selfadjoint operator on $\ch= \ell^2(\mathbb{Z})$. Suppose that the sum in \eqref{commrul} is finite, and furthermore \eqref{unifbound}, \eqref{commcz} hold true. Then the dynamical system $(\gar_T, \alpha)$ is  uniquely mixing with $\om$ the unique invariant state.
\end{thm}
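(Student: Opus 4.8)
The plan is to deduce unique mixing from a uniform Cesàro estimate that is, essentially, already contained in Proposition \ref{sum2}. The bridge I would use (in the spirit of \cite{F22}) is the elementary observation that $(\gar_T,\alpha)$ is uniquely mixing with invariant state $\om$ as soon as
$$
\lim_{m\to\infty}\,\sup_{0\le n_1<n_2<\cdots<n_m}\,\frac1m\Big\|\sum_{j=1}^{m}\big(\alpha^{n_j}(a)-\om(a)\idd\big)\Big\|=0\,,\qquad a\in\gar_T\,.
$$
Indeed, if for some $f\in\gar_T^*$ and some $a$ (which we may take with $\om(a)=0$, replacing $a$ by $a-\om(a)\idd$) the scalars $f(\alpha^{n}(a))$ did not tend to $0$, we could pass to a subsequence on which they all lie, after multiplication by a fixed unimodular constant, in a half-plane $\{\mathrm{Re}\,z\ge\eps/2\}$; then $\frac1m\big\|\sum_{j}\alpha^{n_j}(a)\big\|\ge\eps/(2\|f\|)$ for all $m$, contradicting the displayed bound. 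I would state this short ``sector'' argument in one line, after which unique mixing, and the automatic uniqueness of the invariant state, follow.

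Next I would reduce to a dense subalgebra: since each $\alpha^{n}$ is a $*$-automorphism, hence isometric, replacing $a$ by $a_0$ changes the left-hand side above by at most $2\|a-a_0\|$, so it is enough to verify the uniform Cesàro condition for $a$ in the dense unital $*$-subalgebra $\gar^0_T$. Here the standing hypothesis that the sum in \eqref{commrul} is finite enters in an essential way: the commutation relation is then a genuine bounded-operator identity, so the usual normal-ordering (Wick) procedure terminates and expresses every element of $\gar^0_T$ as a finite linear combination of $\idd$ and non-scalar Wick-ordered monomials $W=a^\dagger(e_{i_1})\cdots a^\dagger(e_{i_p})a(e_{j_1})\cdots a(e_{j_q})$ with $p+q\ge1$. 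By linearity it suffices to treat $a=W$ of this form; the case $a=\idd$ is trivial since $\alpha^{n}(\idd)-\om(\idd)\idd=0$.

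For a non-scalar Wick monomial $W$ one checks at once that $\om(W)=\langle W\Om,\Om\rangle_T=0$: if $q\ge1$ then $a(e_{j_q})\Om=0$, and if $q=0$ then $W\Om\in\ch_T^{p}$ is orthogonal to $\ch_T^{0}=\mathbb{C}\Om$. Given $0\le n_1<\cdots<n_m$, shifting all indices by $n_1$ and using invertibility of $\alpha$ gives $\big\|\sum_{j=1}^{m}\alpha^{n_j}(W)\big\|=\big\|\sum_{j=1}^{m}\alpha^{\,n_j-n_1}(W)\big\|$ with $0\le n_1-n_1<\cdots<n_m-n_1$, so Proposition \ref{sum2} (with $r=p+q$) yields
$$
\frac1m\Big\|\sum_{j=1}^{m}\alpha^{n_j}(W)\Big\|\le\frac1m\sqrt{m\,(M_T)^{p+q}}=\sqrt{\frac{(M_T)^{p+q}}{m}}\longrightarrow 0
$$
as $m\to\infty$, uniformly in the choice of $n_1<\cdots<n_m$. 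This is precisely the required estimate for $a=W$, hence (by linearity and density) for all $a\in\gar_T$; since $\om$ is $\alpha$-invariant by construction, $(\gar_T,\alpha)$ is uniquely mixing with $\om$ the unique invariant state.

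The genuinely delicate points are only two: the bridge lemma of the first paragraph, i.e. that control of \emph{all} normalised Cesàro sums forces $f(\alpha^n(a))\to f(\idd)\om(a)$ for every $f\in\gar_T^*$; and the fact that the Wick normal-ordering terminates, which relies squarely on the finiteness of the sum in \eqref{commrul}. Everything else is bookkeeping layered on top of Proposition \ref{sum2}, whose proof in turn rests on the uniform bound \eqref{unifbound} through Lemma \ref{sum1}.
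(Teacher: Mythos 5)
Your proof is correct and follows essentially the same route as the paper's: reduce by density and the isometry of $\a$ to Wick-ordered monomials in $\gar^0_T$ (where finiteness of the sum in \eqref{commrul} is used), apply Proposition \ref{sum2} to get the $O(1/\sqrt{m})$ bound on normalised sums along arbitrary subsequences, and pass from subsequential Ces\`aro convergence to unique mixing. The only differences are that you prove that last bridge directly via your sector/compactness argument where the paper simply cites Proposition 2.3 of \cite{F22}, and you spell out the Wick-ordering reduction that the paper leaves implicit in ``Proposition \ref{sum2} easily implies''.
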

\begin{proof}
For $X\in\gar^0_T$, put $E(X):=\om(X)I$. Proposition \ref{sum2} easily implies that
\begin{equation}
\label{ergo}
\lim_n\frac{1}{n}\sum_{k=1}^{n}\alpha^{l_k}(X)=E(X)\,,\quad X\in\gar^0_T\,,
\end{equation}
for each fixed subsequence $\{l_1,\dots,l_k,\dots\}\subset\bn$, where the limit is understood in norm. A standard 3-$\eps$ argument implies that \eqref{ergo} holds true for a generic element of $\gar_T$ as well. The proof follows from Proposition 2.3 in \cite{F22}.
\end{proof}
\noindent
Theorem \ref{mixing} holds true even for the $C^*$-dynamical system $(\gg_T,\a)$ by restriction. In addition, it holds true for all the cases listed in \cite{Boz} which fulfil the hypothesis requested in the above statement, for the forward shift $\a$, and for the backward one corresponding to the automorphism $\a^{-1}$ as well.

\section{shift invariant states of the monotone $C^*$-algebra}
\label{monsec}

In the following lines, we will see as some of the general results contained in Section \ref{sec3} can be directly applied to study the set of the shift invariant states on
the Monotone $C^*$-algebra, and their ergodic properties. For such a purpose, we explicitly describe some basic facts on Monotone Fock space and fundamental operators acting on them, see for more details \cite{CrLu, Lu2, Lu, M}.

For $k\geq 1$, denote $I_k:=\{(i_1,i_2,\ldots,i_k) \mid i_1< i_2 < \cdots <i_k, i_j\in \mathbb{Z}\}$, and for $k=0$, we take $I_0:=\{\emptyset\}$, $\emptyset$ being the empty sequence. The Hilbert space $\ch_k:=\ell^2(I_k)$ is precisely the $k$-particles space for the Monotone quantisation. In particular, the $0$-particle space $\ch_0=\ell^2(\emptyset)$ is identified with the complex scalar field $\mathbb{C}$. The Monotone Fock space is precisely $\cf_m=\bigoplus_{k=0}^{\infty} \ch_k$.

Given an increasing sequence $\a=(i_1,i_2,\ldots,i_k)$ of natural numbers, we denote by $e_{(\a)}$ the generic element of canonical basis of $\cf_m$. For each pair of such sequences
$\a=(i_1,i_2,\ldots,i_k)$, $\b=(j_1,j_2,\ldots,j_l)$, we say $\a < \b$ if $i_k < j_1$. By convention, $I_0<\a$ for each $\a\neq I_0$.
The Monotone creation and annihilation operators are respectively given, for any $i\in \mathbb{Z}$, by
\begin{equation*}
a^\dagger_ie_{(i_1,i_2,\ldots,i_k)}:=\left\{
\begin{array}{ll}
e_{(i,i_1,i_2,\ldots,i_k)} & \text{if}\, (i)< (i_1,i_2,\ldots,i_k)\,, \\
0 & \text{otherwise}\,, \\
\end{array}
\right.
\end{equation*}
\begin{equation*}
a_ie_{(i_1,i_2,\ldots,i_k)}:=\left\{
\begin{array}{ll}
e_{(i_2,\ldots,i_k)} & \text{if}\, k\geq 1\,\,\,\,\,\, \text{and}\,\,\,\,\,\, i=i_1\,,\\
0 & \text{otherwise}\,. \\
\end{array}
\right.
\end{equation*}
By dropping the subscript "$m$" for the norms, one can check that $\|a^\dagger_i\|=\|a_i\|=1$, see e.g. \cite{Boz}, Proposition 8.
Moreover, $a^\dagger_i$ and $a_i$ are mutually adjoint and satisfy the following relations
\begin{equation}
\label{comrul}
\begin{array}{ll}
  a^\dagger_ia^\dagger_j=a_ja_i=0 & \text{if}\,\, i\geq j\,, \\
  a_ia^\dagger_j=0 & \text{if}\,\, i\neq j\,.
\end{array}
\end{equation}
In addition, the following commutation relation
\begin{equation*}
a_ia^\dagger_i=I-\sum_{k\leq i}a^\dagger_k a_k
\end{equation*}
is also satisfied. The latter is indeed of the type of those considered in \eqref{commrul}. In fact, the Monotone corresponds to the Yang-Baxter-Hecke operator $T_m:\ch_1\otimes \ch_1 \rightarrow \ch_1\otimes \ch_1$ given for $i,j\in\mathbb{N}$,
\begin{equation}
\label{TM}
T_m(e_i\otimes e_j):=\left\{
\begin{array}{ll}
                       0 & \text{if}\,\, i<j\,,\\
                       -(e_i\otimes e_j) & \text{if}\,\, i\geq j\,.
                     \end{array}
                     \right.
\end{equation}
Indeed, by Proposition 7 of \cite{Boz}, the $T_m$-deformed Fock space coincides with the Monotone Fock space. Here, $q=0$ and $\underline{n}!=1$ for all $n$, whereas $P_m^{(n)}$ is the orthogonal projection of $\ch^{\otimes n}$ onto the linear span of $\{e_{i_1}\otimes e_{i_2}\otimes \cdots \otimes e_{i_n} | i_1<i_2< \cdots < i_n\}$.

The next result ensures in addition that operators which give rise to the Monotone deformation of the full scalar product realise the fundamental condition \eqref{unifbound}.
\begin{lem}
\label{RN}
Let $T_m$ be defined as in \eqref{TM}. Then for each $n\in\mathbb{N}$, one has $0\leq R^{(n)}\leq I $.
\end{lem}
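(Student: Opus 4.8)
The plan is to diagonalise everything. The key observation, special to the monotone case, is that the Yang--Baxter--Hecke operator $T_m$ of \eqref{TM} is \emph{diagonal} in the canonical orthonormal basis of $\ch\otimes\ch$: it sends $e_i\otimes e_j$ to $0$ when $i<j$ and to $-e_i\otimes e_j$ when $i\ge j$. Hence, on $\ch^{\otimes n}$, every operator $T_k=I^{\otimes(k-1)}\otimes T_m\otimes I^{\otimes(n-k-1)}$ is diagonal in the product basis $\{e_{i_1}\otimes\cdots\otimes e_{i_n}\}$, acting on $\eta:=e_{i_1}\otimes\cdots\otimes e_{i_n}$ by the scalar $-1$ when $i_k\ge i_{k+1}$ and by $0$ otherwise. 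In particular the $T_k$ pairwise commute, and each product $T_1T_2\cdots T_j$ is again diagonal in this basis, with eigenvalue $(-1)^j$ on $\eta$ when $i_1\ge i_2\ge\cdots\ge i_{j+1}$ and eigenvalue $0$ as soon as $i_l<i_{l+1}$ for some $l\le j$.

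From here one reads off $R^{(n)}$ directly from \eqref{Rn}. Fixing $\eta=e_{i_1}\otimes\cdots\otimes e_{i_n}$, let $m=m(\eta)\in\{0,1,\dots,n-1\}$ be the largest index for which $i_1\ge i_2\ge\cdots\ge i_{m+1}$ (so $m=0$ already when $i_1<i_2$). By the previous step the summand $T_1\cdots T_j$ of $R^{(n)}$ contributes $(-1)^j\eta$ for $1\le j\le m$ and $0$ for $m<j\le n-1$, whence
\[
R^{(n)}\eta=\Big(\,\sum_{j=0}^{m}(-1)^j\Big)\eta .
\]
As $\sum_{j=0}^{m}(-1)^j$ is $1$ if $m$ is even and $0$ if $m$ is odd, the (bounded) operator $R^{(n)}$ is diagonal in the canonical basis with all eigenvalues in $\{0,1\}$; thus $R^{(n)}$ is an orthogonal projection, $(R^{(n)})^2=R^{(n)}=(R^{(n)})^*$, and in particular $0\le R^{(n)}\le I$. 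As a by-product $\|R^{(n)}\|\le 1$ for every $n$, so the monotone case trivially fulfils \eqref{unifbound} with $M_{T_m}=1$.

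I do not anticipate any real obstacle: once the simultaneous diagonalisation is spotted, the rest is bookkeeping. The only mildly delicate point is the claim that a single ``ascent'' $i_l<i_{l+1}$ with $l\le j$ already kills $T_1\cdots T_j\,\eta$, which is immediate because then the eigenvalue of the factor $T_l$ on $\eta$ vanishes; and that, by maximality of $m(\eta)$, such an ascent occurs at position $m(\eta)+1$ whenever $m(\eta)<n-1$. If one prefers to bypass the index $m(\eta)$ altogether, the same statement follows by induction on $n$ from the identity $R^{(n+1)}=I+T_1\big(I\otimes R^{(n)}\big)$ together with \eqref{Pn}, but the direct diagonalisation is the cleanest route.
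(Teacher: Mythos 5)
Your proof is correct and follows essentially the same route as the paper's: both exploit the fact that $T_m$, and hence each summand $T_1\cdots T_j$ of $R^{(n)}$, acts diagonally on the canonical product basis $e_{i_1}\otimes\cdots\otimes e_{i_n}$, with the eigenvalue determined by the position of the first ascent $i_l<i_{l+1}$. You merely replace the paper's induction (via the reduction $R^{(n)}\eta=(R^{(j)}\otimes I\otimes\cdots\otimes I)\eta$) by the explicit alternating sum $\sum_{j=0}^{m}(-1)^j\in\{0,1\}$, which has the small bonus of exhibiting $R^{(n)}$ as an orthogonal projection.
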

\begin{proof}
Notice that $R^{(1)}=I$, and $R^{(2)}=I + T_m$ is the orthogonal projection onto the linear space generated by the vectors $e_{i_1} \otimes e_{i_2}$ with $i_1 < i_2$.
Moreover, fix $n$ and take $i_1,i_2,\ldots, i_n \in\mathbb{Z}$. Using \eqref{TM}, we have for $i_1\geq i_2 \geq \ldots \geq i_n$,
\begin{equation*}
R^{(n)}(e_{i_1} \otimes \cdots \otimes e_{i_n})=\left\{
        \begin{array}{ll}
          e_{i_1} \otimes \cdots \otimes e_{i_n} & \text{if}\,\, n\,\, \text{ is odd}\,,\\
          0 & \text{if}\,\,  n\,\, \text{is even}\,,
        \end{array}
        \right.
\end{equation*}
and if $j= \min \{1,\ldots, n-1\}$ s.t. $i_j < i_{j+1}$,
$$
 R^{(n)}(e_{i_1} \otimes \cdots \otimes e_{i_n})=\big(R^{(j)}\otimes I \otimes \cdots \otimes I\big)(e_{i_1} \otimes \cdots \otimes e_{i_n})\,.
$$
The thesis follows by induction.
\end{proof}
\noindent
The $C^*$-algebra $\gar_m$ and its subalgebra $\gg_m$ acting on $\cf_m$, are the unital
$C^*$-algebras generated by the annihilators $\{a_i\mid i\in\mathbb{Z}\}$, and the selfadjoint part of
annihilators $\{s_i\mid i\in\mathbb{Z}\}$ respectively, with $
s_i:=a_i+a^+_i$. We will see later (cf. Proposition \ref{invmon2}) that these two algebras coincide.

Notice that \eqref{TM} ensures that \eqref{commcz} is satisfied when $U$ is the unitary shift on $\ell^2(\mathbb{Z})$. Then from Proposition \ref{rash}, the right shift $\alpha$ acts on $\gar_m$ in the usual way:
$$
\alpha(a_i):=a_{i+1}\,\,, i\in\mathbb{Z}\,.
$$
From now on, $P_\Om$ will denote the orthogonal projection onto the linear space generated by the vacuum vector $\Om$.

We first notice that $(\gar_m, \alpha)$ cannot be uniquely ergodic (w.r.t. the fixed point algebra).
Indeed, let us fix $i\in \mathbb{Z}$. As for $n\to+\infty$, $\alpha^n(a_ia^\dagger_i)\downarrow P_\Om$ (see e.g. the proof of Proposition \ref{vncz} below), we get
$$
\lim_{n\rightarrow \infty} \frac{1}{n} \sum_{k=0}^{n-1}\alpha^k(a_ia^\dagger_i)=P_\Om
$$
in the strong operator topology. On the other hand,
the convergence is not in norm, since
$$
1\geq\bigg\|\frac{1}{n} \sum_{k=0}^{n-1}\alpha^k(a_ia^\dagger_i)-P_\Om \bigg\|\geq \bigg\|\bigg(\frac{1}{n} \sum_{k=0}^{n-1}\alpha^k(a_ia^\dagger_i)-P_\Om\bigg)e_{(i+n)} \bigg\|= 1\,.
$$
\vskip.3cm
\begin{rem}
This fact performs an example for the general failure of the thesis exposed in Theorem \ref{mixing} when the sum \eqref{commrul} is not finite, even if all the other conditions are satisfied. Conversely, for the Boolean case (cf. Section \ref{boolsec}) we are able to see that $(\gar_\text{Boole},\a)$ is indeed uniquely mixing w.r.t. the fixed point algebra, even if
the sum \eqref{commrul} is not finite.
\end{rem}
\vskip.3cm
\noindent
The goal of the present section is to study the convex set of the shift invariant states $\cs_{\bz}(\gar_m)$ on the Monotone algebra $\gar_m$. 
We start by describing in details the unital $*$-algebra $\gar_m^0$ generated by the Monotone commutation relations acting on the Monotone Fock space.
\vskip.3cm
\begin{defin}
\label{deflampi}
A word $X$ in $\gar^0_m$ is said to have a $\lambda$-\textbf{form} if there are $m,n\in\left\{  0,1,2,\ldots
\right\}$ and $i_1<i_2<\cdots < i_m, j_1>j_2> \cdots > j_n$ such
that
$$
X=a_{i_1}^{\dagger}\cdots a_{i_m}^{\dagger} a_{j_1}\cdots a_{j_n}\,,
$$
with $X=I$, the empty word, if $m=n=0$. Its length is $l(X)=m+n$.
Furthermore, $X$ is said to have a $\pi$-\textbf{form} if there are $m,n\in\left\{0,1,2,\ldots
\right\}$, $k\in\mathbb{Z}$, $i_1<i_2<\cdots < i_m, j_1>j_2> \cdots > j_n$ such that $i_m<k>j_1$ and
$$
X=a_{i_1}^{\dagger}\cdots a_{i_m}^{\dagger} a_{k}a_{k}^{\dagger} a_{j_1}\cdots a_{j_n}\,.
$$
Its length is $l(X)=m+2+n$.
\end{defin}
\vskip.3cm
\begin{lem}
\label{lampi1}
For each $a_j,a_k\in \gar_m$, one has
\begin{equation}
\label{b-le01a}
a_{k}a_{j}a_{j}^{\dagger}=\delta_{k)}\left(  j\right)a_{k}\,,\quad a_{j}a_{j}^{\dagger}a_{k}^{\dagger}=\delta_{k)}\left(j\right)a_{k}^{\dagger}.
\end{equation}
In addition, if $j\leq k$,
\begin{equation}
\label{b-01b}
a_{j}a_{j}^{\dagger}a_{k}=a_{k}\,,\quad a_{k}^{\dagger}a_{j}a_{j}^{\dagger}=a_{k}^{\dagger},
\end{equation}
where
\begin{equation*}
\delta_{k)}\left(j\right):=\left\{
\begin{array}{ll}
1 & \text{if } j<k\,,\\
0 & {\rm otherwise}\,.
\end{array}
\right.
\end{equation*}
\end{lem}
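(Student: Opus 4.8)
The plan is to verify the four identities in \eqref{b-le01a} and \eqref{b-01b} by direct action on the canonical basis $\{e_{(\a)}\}$ of the Monotone Fock space $\cf_m$, exploiting the explicit combinatorial description of $a_i^\dagger$ and $a_i$ given above together with the vanishing relations \eqref{comrul}. Since these are bounded operators and the $e_{(\a)}$ span a dense subspace, checking them on basis vectors suffices.

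First I would treat $a_k a_j a_j^\dagger = \delta_{k)}(j)\, a_k$. Take a basis vector $e_{(\a)}$ with $\a=(i_1,\dots,i_l)$. By definition $a_j a_j^\dagger e_{(\a)} = e_{(\a)}$ precisely when $(j)<\a$, i.e. when $j<i_1$ (or $\a=I_0$), and $a_j a_j^\dagger e_{(\a)}=0$ otherwise: indeed $a_j^\dagger e_{(\a)}=e_{(j,i_1,\dots,i_l)}$ in the first case and then $a_j$ removes the freshly prepended $j$, while in the second case $a_j^\dagger e_{(\a)}=0$. Hence $a_j a_j^\dagger$ is the orthogonal projection onto $\overline{\mathrm{span}}\{e_{(\a)} : (j)<\a\}$. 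Now apply $a_k$: by the definition of the annihilator, $a_k e_{(\a)}\neq 0$ forces $\a=(k,i_2,\dots,i_l)$ with $k<i_2$, and on such a vector the condition $(j)<\a$ reads $j<k$. Comparing with $a_k e_{(\a)}$, which is non-zero exactly on the same vectors, one gets $a_k a_j a_j^\dagger e_{(\a)} = a_k e_{(\a)}$ when $j<k$ and $0$ when $j\geq k$, which is exactly $\delta_{k)}(j)\,a_k e_{(\a)}$. The second identity in \eqref{b-le01a}, $a_j a_j^\dagger a_k^\dagger = \delta_{k)}(j)\, a_k^\dagger$, follows either by the same bookkeeping (note $a_k^\dagger e_{(\a)} = e_{(k,\a)}$ requires $k<i_1$, and then the leading index is $k$, so $(j)<(k,\a)$ iff $j<k$) or simply by taking adjoints in the first one, since $(a_k a_j a_j^\dagger)^* = a_j a_j^\dagger a_k^\dagger$ and $\delta_{k)}(j)$ is real.

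For \eqref{b-01b}, assume $j\leq k$. Using again that $a_j a_j^\dagger$ is the projection onto $\overline{\mathrm{span}}\{e_{(\a)}:(j)<\a\}$: the range of $a_k$ consists of vectors $e_{(i_2,\dots,i_l)}$ obtained from $e_{(k,i_2,\dots,i_l)}$ with $k<i_2$, so in particular every non-zero vector in the range of $a_k$ has leading index strictly greater than $k\geq j$, hence satisfies $(j)<\a$ and is fixed by $a_j a_j^\dagger$; thus $a_j a_j^\dagger a_k = a_k$. The companion identity $a_k^\dagger a_j a_j^\dagger = a_k^\dagger$ is the adjoint of $a_j a_j^\dagger a_k = a_k$ (both $j$ and $k$ being fixed with $j\leq k$). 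I would write the argument so that \eqref{b-01b} is deduced cleanly from the projection description rather than re-expanding cases.

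I do not anticipate a serious obstacle here; the only point requiring mild care is keeping straight the edge cases involving the empty sequence $I_0$ (the vacuum $\Om$), for which $a_j^\dagger\Om = e_{(j)}$ always, and the convention $I_0<\a$. The cleanest exposition is the one I sketched: first identify $a_ja_j^\dagger$ with the orthogonal projection onto the closed span of those $e_{(\a)}$ whose leading index exceeds $j$, and then read off all four relations from how $a_k$ and $a_k^\dagger$ interact with that subspace, invoking adjunction to halve the work.
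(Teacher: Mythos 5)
Your proof is correct and follows essentially the same route as the paper: direct verification on the canonical basis vectors of $\cf_m$, with the second identity in each pair obtained by taking adjoints. The only difference is presentational --- you package the basis computation as the observation that $a_ja_j^\dagger$ is the orthogonal projection onto the closed span of the vacuum and the $e_{(\a)}$ with leading index exceeding $j$, and you spell out \eqref{b-le01a}, which the paper dismisses as immediate from the definitions.
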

\begin{proof}
By the definition of Monotone creation and annihilation operators, one easily obtains \eqref{b-le01a}.
Concerning \eqref{b-01b}, we get
$$
a_{j}a_{j}^{\dagger}a_{k}\Omega=0=a_{k}\Omega\,,
$$
$$
a_{j}a_{j}^{\dagger}a_{k}e_{h}=\delta_{k,h}a_{j}a_{j}^{\dagger}\Omega=\delta_{k,h}\Omega=a_{k}e_{h}\,,\quad h\in\mathbb{Z}\,.
$$
If moreover, $h_{1}<h_{2}$ and $\xi\in \cf_m$,
\begin{align*}
a_{j}a_{j}^{\dagger}a_{k}\left(e_{h_{1}}\otimes e_{h_{2}}\otimes\xi\right)   &
=\delta_{k,h_{1}}a_{j}a_{j}^{\dagger}\left(e_{h_{2}}\otimes\xi\right)\\
&=\delta_{k,h_{1}}\delta_{h_{2})}\left(  j\right)  \left(  e_{h_{2}}\otimes
\xi\right) \\
&=a_{k}\left(e_{h_{1}}\otimes e_{h_{2}}\otimes\xi\right)
\end{align*}
where, in the last line we used the identity $\delta_{k,h_{1}}\delta_{h_{2})}\left(j\right)  =\delta_{k,h_{1}}$, which holds since $j\leq k$ and $h_{1}<h_{2}$.
The second formula of (\ref{b-01b}) is achieved form the first one by taking the adjoint.
\end{proof}
\begin{lem}
\label{lampi2}
Any nonnull element $X$ in $\gar_m^0$ is a finite linear combinations of $\lambda$-forms or
$\pi$-forms.
\end{lem}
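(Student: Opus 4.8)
The plan is to show that any word in the generators $\{a_i,a_i^\dagger\}$ reduces, modulo the Monotone relations, to either a $\lambda$-form or a $\pi$-form (or vanishes), and then that such reductions are stable under multiplication by a single generator on the left; since $\gar_m^0$ is the linear span of words, this suffices. First I would set up an induction on the length of the word $X=b_1b_2\cdots b_L$, where each $b_r\in\{a_i,a_i^\dagger : i\in\bz\}$. The base cases $L=0,1$ are immediate: $I$ is a $\lambda$-form, and a single creator or annihilator is a $\lambda$-form. For the inductive step, assume every word of length $<L$ is a linear combination of $\lambda$- and $\pi$-forms, write $X=b_1 Y$ with $Y$ of length $L-1$ already reduced, and analyse $b_1\cdot(\lambda\text{-form})$ and $b_1\cdot(\pi\text{-form})$.

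The core of the argument is the left-multiplication analysis, and this is where the relations \eqref{comrul} and Lemma \ref{lampi1} do all the work. Consider $b_1 Z$ with $Z=a_{i_1}^\dagger\cdots a_{i_m}^\dagger a_{j_1}\cdots a_{j_n}$ a $\lambda$-form ($i_1<\cdots<i_m$, $j_1>\cdots>j_n$). If $b_1=a_k^\dagger$: when $k<i_1$ we get a longer $\lambda$-form directly; when $k\ge i_1$ we use $a_k^\dagger a_{i_1}^\dagger=0$ unless $k<i_1$, so the product vanishes — except we must also treat $m=0$, where $a_k^\dagger a_{j_1}\cdots a_{j_n}$: if $n=0$ it is a $\lambda$-form, and if $n\ge 1$ we invoke the second identity of \eqref{b-le01a}/\eqref{b-01b} as appropriate depending on whether $k$ relates to $j_1$, producing either $0$, a shorter word, or a $\pi$-form $a_k^\dagger a_{j_1} a_{j_1}^\dagger(\ldots)$ — wait, here one must be careful: $a_k^\dagger a_{j_1}$ with $k>j_1$ is not immediately a normal form, so one writes $a_k^\dagger a_{j_1}=a_k^\dagger a_{j_1}a_{j_1}^\dagger a_{j_1}$ only when... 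In fact the cleaner route is: $a_k^\dagger a_{j_1}$ — multiply and use that $a_{j_1}$ acts nontrivially only on vectors starting with $e_{j_1}$, so $a_k^\dagger a_{j_1}=\delta$-type expression; I would instead systematically use $a_j a_j^\dagger$ as the relevant projection and push it through. Symmetrically, if $b_1=a_k$: the case $n=0$ gives $a_k a_{i_1}^\dagger\cdots$, and $a_k a_{i_1}^\dagger=0$ if $k\ne i_1$, while $a_k a_k^\dagger(\ldots)$ must be kept as a $\pi$-form when it cannot be simplified (i.e. when the tail forces $k$ to be a strict maximum), matching exactly the $i_m<k>j_1$ condition in Definition \ref{deflampi}; and if $k=i_1$ the creator is absorbed, $a_{i_1}a_{i_1}^\dagger a_{i_2}^\dagger\cdots$, which by \eqref{b-01b} (since $i_1<i_2$) collapses to $a_{i_2}^\dagger\cdots$, a shorter $\lambda$-form.

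The analogous but slightly longer bookkeeping handles $b_1 Z$ when $Z$ is a $\pi$-form $a_{i_1}^\dagger\cdots a_{i_m}^\dagger a_k a_k^\dagger a_{j_1}\cdots a_{j_n}$: multiplying on the left by $a_h^\dagger$ or $a_h$ and using that $a_k a_k^\dagger$ is the projection $I-\sum_{l\le k}a_l^\dagger a_l$ together with the vanishing relations, one again lands among $\{0,\ \lambda\text{-forms},\ \pi\text{-forms}\}$, possibly after expanding into a finite linear combination. The main obstacle — and the part requiring genuine care rather than routine symbol-pushing — is organising the case distinctions so that every subcase is covered exactly once and that the "middle" situations (where a generator meets the $a_k a_k^\dagger$ block of a $\pi$-form, or where $m=0$ or $n=0$ degenerates a $\lambda$-form) are handled consistently; in particular one must verify that no genuinely new shape (e.g. a double $\pi$-block $a_k a_k^\dagger a_l a_l^\dagger$) can survive, which follows because $a_k^\dagger a_l=0$ for $k\ge l$ forces any such block to telescope. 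Finally, since the $\lambda$- and $\pi$-forms are closed under left multiplication by generators and $X=I$ is one, every word is a finite linear combination of them by induction, and linearity extends this to all of $\gar_m^0$.
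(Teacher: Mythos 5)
Your overall strategy --- induction on the word length, reducing one generator at a time against a word already known to be in $\lambda$- or $\pi$-form by means of \eqref{comrul} and the identities of Lemma \ref{lampi1} --- is essentially the paper's own proof. The only structural difference is orientation: you peel off the \emph{first} letter and analyse left multiplication throughout, whereas the paper writes $X=X_1a_{i_n}^\sharp$, analyses right multiplication, and recovers left multiplication at the very end by taking adjoints (the adjoint of a $\lambda$- or $\pi$-form being again of the same form). Either orientation is fine and buys nothing over the other.

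However, two spots in your writeup do not close as written. First, the case you explicitly leave hanging --- $a_k^\dagger a_{j_1}\cdots a_{j_n}$ with $k$ in arbitrary position relative to $j_1$ --- is not a problem at all: Definition \ref{deflampi} imposes \emph{no} relation between the last creator index and the first annihilator index of a $\lambda$-form, so $a_k^\dagger a_{j_1}\cdots a_{j_n}$ is already a $\lambda$-form with $m=1$ for every $k$, and no reduction via \eqref{b-le01a} or \eqref{b-01b} is needed or even available there (those identities concern blocks $a_ja_j^\dagger$, not $a_k^\dagger a_j$); your ``one must be careful'' digression should simply be deleted. Second, for the $\pi$-form case you reach for $a_ka_k^\dagger=I-\sum_{l\le k}a_l^\dagger a_l$: this is precisely the tool you must avoid, since the sum is infinite and not norm convergent (as the paper remarks right after Lemma \ref{lampi3}), so expanding it would not yield a \emph{finite} linear combination of words. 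The finite identities of Lemma \ref{lampi1} suffice and are what the paper uses: for instance $a_h\,a_pa_p^\dagger=\delta_{h)}(p)\,a_h$ and, for $p\le h$, $a_h^\dagger\,a_pa_p^\dagger=a_h^\dagger$ dispose of every way a single generator can meet the middle block of a $\pi$-form, the remaining subcases either vanishing by \eqref{comrul} or staying in $\pi$-form. With these two repairs your case analysis closes and the argument is correct.
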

\begin{proof}
Let $X=a_{i_i}^{\sharp}\cdots a_{i_n}^{\sharp}$ with $n\in\mathbb{N},$ $i_1,\ldots, i_n\in\mathbb{Z}$ and $\sharp\in\{1,\dagger\}$. If $n=1,2$, one easily achieves the result. Now we prove it by induction on the length $l(X)$.

Let us denote $X=X_1a_{i_n}^{\sharp}$, where $X_1=a_{i_1}^{\sharp}\cdots a_{i_{n-1}}^{\sharp}$. If $X$ is non null, then $X_1$ is so, and consequently by induction, $X_1$ has the $\lambda$-form or the $\pi$-form. We also suppose that $l(X_1)\geq2$.

We first assume that $X_1$ has a $\lambda$-form, i.e. $X_1=a_{i_1}^{\dagger}\cdots a_{i_m}^{\dagger} a_{j_1}\cdots a_{j_k}$ and $i_1<\cdots < i_m$, $j_1>\cdots > j_k$.
If $k>0$ and take $a^\sharp_{i_n}=a_{i_n}$, from \eqref{comrul} one has $j_k> i_n$, i.e. $X$ has a $\lambda$-form. If instead, $k>0$ and take $a^\sharp_{i_n}=a_{i_n}^{\dagger}$, again \eqref{comrul}, \eqref{b-le01a} and \eqref{b-01b} give a $\lambda$-form for $X$. Namely, $X=a_{i_1}^{\dagger}\cdots a_{i_m}^{\dagger} a_{j_1}\cdots a_{j_{k-1}}$ (in fact, in this case necessarily $j_k=i_n$ since $X$ is non null). Suppose that $k=0$, then according to our assumptions, $m\geq 2$. After taking $a_{i_n}^{\sharp}=a_{i_n}$, we get $X$ has a $\lambda$-form, whereas when we consider $a_{i_n}^{\sharp}=a_{i_n}^{\dagger}$, from \eqref{comrul} one has $i_1<i_2<\cdots < i_m< i_n$, then giving also a $\lambda$-form to $X$.

Now we assume $X_1$ has a $\pi$-form, i.e. $X_1=a_{i_1}^{\dagger}\cdots a_{i_m}^{\dagger}a_pa^\dagger_p a_{j_1}\cdots a_{j_k}$, $i_1<\cdots < i_m$, $j_1>\cdots > j_k$ and $i_m<p>j_1$. If $k>0$, one uses the same arguments developed above to achieve a $\pi$-form for $X$.
If instead, $k=0$ and get $a_{i_n}^{\sharp}=a_{i_n}$, we have to discuss the cases $p\leq i_n$ and $p> i_n$, respectively. In the former, from \eqref{b-01b} one has $X=a_{i_1}^{\dagger}\cdots a_{i_m}^{\dagger}a_{i_n}$, i.e. $X$ has a $\lambda$-form. When, instead $p> i_n$, $X$ has a $\pi$-form.
The last case can occur when $k=0$ and $a_{i_n}^{\sharp}=a_{i_n}^\dagger$. Here, from \eqref{b-le01a} it follows $X$ does not vanish only if $p<i_n$, and in this circumstance $X=a_{i_1}^{\dagger}\cdots a_{i_m}^{\dagger}a_{i_n}^\dagger$, i.e. $X$ has a $\lambda$-form.

As each element in the $*$-algebra is a linear combination of $X$ as above, the thesis follows if one achieves the same representation for the left multiplication. Indeed, since the adjoint of any word in $\l$-form or
$\pi$-form is again a word in $\l$-form or $\pi$-form respectively, we get
$$
a_i^\sharp X=\big(X^*(a_i^\sharp)^*\big)^*=Y^*=Z\,,
$$
where $Y:=X^*(a_i^\sharp)^*$, and consequently $Z:=Y^*$ are words in $\l$-form or
$\pi$-form.
\end{proof}
\begin{lem}
\label{lampi3}
${}$\\
\begin{itemize}
\item[(i)] Two words in $\l$-form
$$
X=a^\dagger_{i_1}\cdots a^\dagger_{i_{m_1}}a_{j_1}\cdots a_{j_{n_1}}\,,\quad Y=a^\dagger_{k_1}\cdots a^\dagger_{k_{m_2}}a_{h_1}\cdots a_{h_{n_2}}
$$
are equal if and only if $m_1=m_2=m$, $n_1=n_2=n$ (with the convention that $m=n=0$ corresponds to the identity), and in addition, $i_l=k_l$, $l=1,2,\dots,m$, $j_l=h_l$, $l=1,2,\dots,n$.
\item[(ii)] Two words in $\pi$-form
$$
X=a^\dagger_{i_1}\cdots a^\dagger_{i_{m_1}}a_{r_1}a^\dagger_{r_1}a_{j_1}\cdots a_{j_{n_1}}\,, \quad
Y=a^\dagger_{k_1}\cdots a^\dagger_{k_{m_2}}a_{r_2}a^\dagger_{r_2}a_{h_1}\cdots a_{h_{n_2}}
$$
are equal if and only if $m_1=m_2=m$, $n_1=n_2=n$, and in addition, $i_l=k_l$, $l=1,2,\dots,m$, $r_1=r_2$, $j_l=h_l$, $l=1,2,\dots,n$.
\item[(iii)] If two words $X,Y$ of $\gar_m^0$ are equal, then those must be both of $\l$-form or of $\pi$-form.
\end{itemize}
\end{lem}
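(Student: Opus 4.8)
All three assertions have an obvious ``if'' direction, so only the ``only if'' direction needs work, and it suffices to show that, from a word $W$ presented in $\l$-form or in $\pi$-form, one can reconstruct --- by letting $W$ act on the canonical basis $\{e_{(\alpha)}\}$ of $\cf_m$ --- all the data of Definition \ref{deflampi}, \emph{including which of the two forms $W$ is}. Everything then reduces to a handful of elementary computations with the defining formulas for the $a_i,a^\dagger_i$, the relations \eqref{comrul}, and the identity $a_ia^\dagger_i=I-\sum_{k\le i}a^\dagger_ka_k$; equivalently, the fact that $a_ka^\dagger_k$ is the orthogonal projection onto the span of those $e_{(\beta)}$ with $\beta=\emptyset$ or $\min\beta>k$.

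The first step is to describe the action on a basis vector. For $W=a^\dagger_{i_1}\cdots a^\dagger_{i_m}a_{j_1}\cdots a_{j_n}$ in $\l$-form, $We_{(\alpha)}\neq0$ forces $|\alpha|\ge n$ and the first $n$ entries of $\alpha$ to be $j_n<j_{n-1}<\cdots<j_1$; then $We_{(\alpha)}$ is obtained by deleting those entries and prepending $i_1,\dots,i_m$ (nonzero exactly when $i_m$ lies below all the surviving entries). A $\pi$-form behaves identically, except that after the deletion the surviving vector is killed unless its smallest entry is larger than the middle index $k$. Out of this I read off three quantities depending only on $W$ as an operator: the number $\nu(W):=\min\{\,|\alpha|:We_{(\alpha)}\neq0\,\}$, which equals $n$; the \emph{unique} sequence $\alpha_0$ of length $\nu(W)$ with $We_{(\alpha_0)}\neq0$, which is $(j_n,\dots,j_1)$ and hence fixes $n$ and the annihilator indices; and the basis vector $We_{(\alpha_0)}$, which equals $e_{(i_1,\dots,i_m)}$ and hence fixes $m$ and the creator indices. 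Since these are intrinsic to $W$, part (i) follows at once.

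The one subtle point is that a $\l$-form and a $\pi$-form with the same $m,n,i_\bullet,j_\bullet$ share the three invariants above (they agree on $\Om$ and on the minimal witness $\alpha_0$), so a further probe is needed to detect the factor $a_ka^\dagger_k$ and to recover $k$. Keeping $\alpha_0=(j_n,\dots,j_1)$ (empty when $n=0$), I would examine $We_{(\alpha_0\frown l)}$, where one further index $l$, larger than every entry of $\alpha_0$, is appended (any $l$, when $n=0$). For a $\l$-form this is nonzero for all such $l$ save the finitely many with $l\le i_m$ (none when $m=0$; set $i_m:=-\infty$ there), whereas for a $\pi$-form it is nonzero exactly for $l>k$, and by Definition \ref{deflampi} this $k$ is strictly above every entry of $\alpha_0$ and above $i_m$. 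Comparing, for two equal operators, the set of $l$ producing a nonzero vector now finishes the job: two equal $\pi$-forms must have $k_1=k_2$, which is (ii); and a $\l$-form cannot equal a $\pi$-form $Y$, for after matching $m,n,i_\bullet,j_\bullet$ one may pick $l$ with $l>i_m$, $l$ above every entry of $\alpha_0$, and $l\le k$ --- possible because $Y$ is a genuine $\pi$-form --- and then $Ye_{(\alpha_0\frown l)}=0$ while the $\l$-form sends $e_{(\alpha_0\frown l)}$ to $e_{(i_1,\dots,i_m,l)}\neq0$; this contradiction yields (iii).

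I do not expect any genuinely hard step here: the whole argument is linear algebra on the Monotone Fock space, and the work is organisational. The one place that needs care is the bookkeeping in the degenerate cases $m=0$ and $n=0$ --- empty creator or annihilator strings, $\alpha_0=\emptyset$ --- where the $\max$/$\min$ conventions must be fixed so that the uniform statements above stay literally true; once that is arranged, (i), (ii) and (iii) all drop out of the four computations.
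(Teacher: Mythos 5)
Your proof is correct and follows essentially the same route as the paper: both arguments test the words on the canonical basis vectors $e_{(j_{n},\dots,j_1)}$ to recover the lengths and indices, and then on the augmented vectors $e_{(j_n,\dots,j_1,l)}$ with $l$ chosen between $\max\{i_m,j_1\}$ and the middle index to separate $\l$-forms from $\pi$-forms and to force $r_1=r_2$. The only cosmetic differences are that you package the data as operator-intrinsic invariants and read $m$ directly off the image vector, where the paper obtains $m_1=m_2$ by passing to adjoints.
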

\begin{proof}
(i) We need to prove just the "only if" assertion. We suppose that $X,Y$ are both not coinciding with the empty word and $X=Y$. If $n_1< n_2$, one has
\begin{align*}
& a^\dagger_{i_1}\cdots a^\dagger_{i_{m_1}}a_{j_1}\cdots a_{j_{n_1}}(e_{j_{n_1}}\otimes \cdots \otimes e_{j_1})=\left\{
\begin{array}{ll}
e_{i_{1}}\otimes \cdots \otimes e_{i_{m_1}}\,, & \text{if } m_1>0\,, \\
\Om & \text{if } m_1=0\,,
\end{array}
\right.
\end{align*}
whereas
$$
a^\dagger_{k_1}\cdots a^\dagger_{k_{m_2}}a_{h_1}\cdots a_{h_{n_2}}(e_{j_{n_1}}\otimes \cdots \otimes e_{j_1})=0\,.
$$
Then, $n_{1}\geq n_{2}$. Similarly one can achieve $n_{2}\geq n_{1}$, and so $n_{1}=n_{2}$. By taking the adjoint, $m_{1}=m_{2}$ holds too. In this case,
\begin{align*}
&a^\dagger_{i_1}\cdots a^\dagger_{i_{m_1}}a_{j_1}\cdots a_{j_{n_1}}(e_{j_{n_1}}\otimes \cdots \otimes e_{j_1})\\
=& a^\dagger_{k_1}\cdots a^\dagger_{k_{m_1}}a_{h_1}\cdots a_{h_{n_1}}(e_{j_{n_1}}\otimes \cdots \otimes e_{j_1})\,,
\end{align*}
or equivalently,
$$
e_{i_{1}}\otimes \cdots \otimes e_{i_{m_1}}=\prod_{p=1}^{n_1}\delta_{h_p,j_p}e_{k_{1}}\otimes \cdots \otimes e_{k_{m_1}}\,.
$$
(ii) Here, we again need to prove just the "only if" assertion. So we suppose $X=Y$. The equalities $m_{1}=m_{2}$, $n_{1}=n_{2}$, $i_p=k_p,\ p\in \{1,\cdots, m_1\}$, $j_p=h_p, \ p\in \{1,\cdots, n_1\}$, can be realised as above. Now we prove $r_1=r_2$. Indeed, after recalling that the involved words do not vanish only if $\min\{r_{1},r_{2}\}> \max \{i_{m_1}, j_1\}$, when $r_1<r_2$,
\begin{align*}
&a^\dagger_{i_1}\cdots a^\dagger_{i_{m_1}}a_{r_1}a^\dagger_{r_1}a_{j_1}\cdots a_{j_{n_1}}(e_{j_{n_1}}\otimes \cdots \otimes e_{j_1}\otimes e_{r_2})\\
=& e_{i_{1}}\otimes \cdots \otimes e_{i_{m_1}}\otimes e_{r_2}
\end{align*}
and
\begin{align*}
& a^\dagger_{k_1}\cdots a^\dagger_{k_{m_1}}a_{r_2}a^\dagger_{r_2}a_{h_1}\cdots a_{h_{n_1}}(e_{j_{n_1}}\otimes \cdots \otimes e_{j_1}\otimes e_{r_2})=0\,.
\end{align*}
Similarly one cannot have $r_1>r_2$.

\noindent
(iii) We take $X$ in $\l$-form and $Y$ in $\pi$-form, that is
$$
X=a^\dagger_{i_1}\cdots a^\dagger_{i_{m_1}}a_{j_1}\cdots a_{j_{n_1}}\,,\quad
Y=a^\dagger_{k_1}\cdots a^\dagger_{k_{m_2}}a_{r_2}a^\dagger_{r_2}a_{h_1}\cdots a_{h_{n_2}}\,.
$$
Suppose without loss of generality, they are both reduced and prove that, necessarily, $X\neq Y$. Indeed, assume $X=Y$. Arguing as above, one gets such an equality cannot holds when $n_1\neq n_2$, or taking the adjoint, when $m_1\neq m_2$. Thus, from now on we assume $n:=n_{1}=n_{2}$ and $m:=m_{1}=m_{2}$. Since
\begin{align*}
&a^\dagger_{i_1}\cdots a^\dagger_{i_m}a_{j_1}\cdots a_{j_n}(e_{j_n}\otimes \cdots \otimes e_{j_1})=e_{i_{1}}\otimes \cdots \otimes e_{i_m}\,,
\end{align*}
and
\begin{align*}
& a^\dagger_{k_1}\cdots a^\dagger_{k_m}a_r a^\dagger_r a_{h_1}\cdots a_{h_n}(e_{j_n}\otimes \cdots \otimes e_{j_1})= \prod_{p=1}^n \delta_{h_p, j_p} e_{k_1}\otimes e_{k_m}\,,
\end{align*}
we need further to require $i_p=k_p,\ p\in \{1,\cdots, m\}$, and $j_p=h_p, \ p\in \{1,\cdots, n\}$. But in this case, for a given $p$ s.t. $\max\{i_m, j_1\}<p \leq r$,
$$
a^\dagger_{i_1}\cdots a^\dagger_{i_m}a_{j_1}\cdots a_{j_n}(e_{j_n}\otimes \cdots \otimes e_{j_1}\otimes e_p)=e_{i_{1}}\otimes \cdots \otimes e_{i_m}\otimes e_p\,,
$$
and
$$
a^\dagger_{i_1}\cdots a^\dagger_{i_m}a_ra^\dagger_ra_{j_1}\cdots a_{j_n}(e_{j_n}\otimes \cdots \otimes e_{j_1}\otimes e_p)=0\,,
$$
which contradicts $X=Y$.
\end{proof}
\vskip.3cm
\begin{rem}
Lemma \ref{lampi2} and Lemma \ref{lampi3} ensure that, first the words in $\l$-form and in $\pi$-form are reduced, and in addition they generate the whole $\gar_m^0$. It is possible to see (cf. Section 4.5 of \cite{CrFid1}) that they are not a basis for $\gar_m^0$. We again stress the identity
$I\in\cb(\cf_m)$ corresponds to the empty word (which is by definition of $\l$-form).
\end{rem}
\vskip.3cm
\noindent
Denote $\ga_m^0:=\spn\big\{X\in\gar_m^0\mid l(X)>0\big\}$.
It is a $*$-subalgebra of $\gar^0_m$.
We are going to show that $I$ does not belong to $\ga_m:=\overline{\ga_m^0}$.
It is worth of mention that, for each $i\in\mathbb{Z}$, $\sum_{k\leq i}a^\dagger_ka_k$ might not belong to $\ga_m$, since it is easy to check $\|a^\dagger_ka_k\|=1$ for any $k$, and so the series cannot be convergent in norm.

The following results are meaningful in the investigation of the relation between $\gar_m$ and $\ga_m$.
\begin{prop}
\label{ide}
For each $X\in\ga_m$ and $\a\in\bc$, one has $\|X+\a I\|\geq|\a|$, and consequently $I\notin\ga_m$.
\end{prop}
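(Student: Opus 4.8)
The plan is to produce, for any fixed $\varepsilon>0$, a unit vector $\eta\in\cf_m$ such that $\|(X+\a I)\eta\|\ge|\a|-\varepsilon$; since $\|X+\a I\|\ge\|(X+\a I)\eta\|$, letting $\varepsilon\to0$ gives the claim. The natural candidate is a canonical basis vector $e_{(h_1,\ldots,h_k)}$ living ``far out to the right'': the point of the commutation relations \eqref{comrul} and of the explicit action of the Monotone creators/annihilators is that a generator $a^\dagger_i$ or $a_i$ kills (or drastically changes the support of) a basis vector whose indices are all much larger than $i$. The hope is that a suitable far-out basis vector lies in the kernel of enough pieces of $X$ that $X\eta=0$, whence $\|(X+\a I)\eta\|=|\a|\|\eta\|=|\a|$ exactly, and $\|X+\a I\|\ge|\a|$ follows. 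The final consequence $I\notin\ga_m$ is then immediate: if $I\in\ga_m$ we could write $I=\lim_n X_n$ with $X_n\in\ga_m^0$, but $\ga_m^0$ is spanned by words of length $>0$, and for such a word $X$ one would need $\|I-X\|$ small, contradicting $\|(-I)+X\|=\|X-I\|\ge 1$ obtained from the bound with $\a=-1$; more precisely, any $X\in\ga_m$ satisfies $\|X-I\|\ge1$, so $I$ cannot be a norm limit of elements of $\ga_m$, i.e. $I\notin\ga_m$ (indeed one even gets $\dist(I,\ga_m)\ge1$).

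First I would reduce to $X\in\ga_m^0$ by density and continuity of $Y\mapsto\|Y+\a I\|$: if the inequality $\|X+\a I\|\ge|\a|$ holds on the dense subspace $\ga_m^0$, it holds on its closure $\ga_m$. Next, using Lemma \ref{lampi2}, write such an $X$ as a finite linear combination $X=\sum_s c_s W_s$ of words $W_s$ in $\l$-form or $\pi$-form, each of length $l(W_s)>0$ (this is where membership in $\ga_m^0$, not merely $\gar_m^0$, is used). Each $W_s$ involves only finitely many indices; let $N$ be an integer strictly larger than all indices appearing in all the $W_s$. Then I would test against the vacuum-adjacent far-out vector: concretely, examine the action of each $\l$-form and $\pi$-form word on $\Om$ and on basis vectors $e_{(h_1,\ldots,h_k)}$ with $h_1>N$. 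A word in $\l$-form $a^\dagger_{i_1}\cdots a^\dagger_{i_m}a_{j_1}\cdots a_{j_n}$ with $n>0$ annihilates $\Om$ (and annihilates $e_{(h)}$ unless $h=j_n$, which is impossible since $j_n\le N<h$); if $n=0$ but $m>0$ the word $a^\dagger_{i_1}\cdots a^\dagger_{i_m}$ maps $\Om$ to $e_{(i_1,\ldots,i_m)}$, a vector of strictly positive particle number. A word in $\pi$-form always contains the block $a_k a^\dagger_k$ with an annihilator $a_k$, hence likewise kills $\Om$. So $X\Om$ is a (finite) linear combination of basis vectors of strictly positive particle number, i.e. $X\Om\perp\Om$. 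Therefore with $\eta=\Om$ we get
\[
\|(X+\a I)\Om\|^2=\|X\Om\|^2+|\a|^2\ge|\a|^2,
\]
which already gives $\|X+\a I\|\ge|\a|$. In fact the Pythagorean identity works because $X\Om\in\bigoplus_{k\ge1}\ch_k$ while $\a\Om\in\ch_0$.

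The main obstacle — really the only subtle point — is verifying cleanly that $X\Om\perp\Om$ for every $X\in\ga_m^0$, i.e. that no nonzero multiple of $\Om$ appears when a length-$\ge1$ word in $\l$-form or $\pi$-form acts on $\Om$. For $\l$-forms this is the observation above (an annihilator present $\Rightarrow$ result is $0$; no annihilator but $m\ge1$ $\Rightarrow$ result has $m\ge1$ particles); for $\pi$-forms one uses that the word contains $a_k$, which annihilates $\Om$ outright. One should also remark that the linear-combination structure is harmless: the component of $X\Om$ in $\ch_0=\bc\Om$ is the sum of the $\ch_0$-components of the $c_s W_s\Om$, each of which vanishes. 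I expect this to be a short argument; no delicate estimate is needed, only the explicit formulas for the Monotone operators and the particle-number grading of $\cf_m$.
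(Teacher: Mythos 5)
There is a genuine gap, and it sits exactly at the point you flagged as ``the only subtle point.'' Your claim that a word in $\pi$-form ``always contains the block $a_ka^\dagger_k$ with an annihilator $a_k$, hence likewise kills $\Om$'' is false: in the block the annihilator acts \emph{after} the creator, so $a_ka_k^\dagger\Om=a_k e_{(k)}=\Om\neq 0$. The $\pi$-form words with $m=n=0$, namely $X=a_ka_k^\dagger$, have length $2>0$, hence lie in $\ga_m^0$, and satisfy $X\Om=\Om$; consequently $X\Om$ is \emph{not} orthogonal to $\Om$ and the Pythagorean identity $\|(X+\a I)\Om\|^2=\|X\Om\|^2+|\a|^2$ breaks down. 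The failure is not cosmetic: for $X=-a_0a_0^\dagger\in\ga_m^0$ and $\a=1$ one has $(X+\a I)\Om=0$, so testing at the vacuum yields the useless bound $\|X+\a I\|\geq 0$. (The proposition of course still holds for this $X$, since $I-a_0a_0^\dagger$ is a nonzero orthogonal projection of norm $1$, but your test vector cannot see it.) Equivalently, the vacuum state does not vanish on $\ga_m$ — indeed $\om(a_ka_k^\dagger)=1$, a fact the paper uses later in Theorem \ref{invmon} — so $\Om$ can never work here.

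The paper's proof repairs exactly this by choosing a different test vector: writing $X=\sum_i\b_iX_i$ with each $X_i$ of positive length, each $X_i$ ends on the right with some $a_{j_i}^\sharp$, and one takes the one-particle vector $e_{(n)}$ with $n<\min_i j_i$. Then $a_{j_i}e_{(n)}=\d_{j_i,n}\Om=0$ and $a^\dagger_{j_i}e_{(n)}=0$ (since a Monotone creator $a^\dagger_j$ kills $e_{(n)}$ unless $j<n$), so every $X_i$ annihilates $e_{(n)}$ outright and $\|(X+\a I)e_{(n)}\|=|\a|$ exactly — no orthogonality decomposition is needed. Your density reduction and the deduction of $I\notin\ga_m$ are fine as written; replacing $\Om$ by such an $e_{(n)}$ (``far out to the left'' rather than the vacuum) turns your argument into the paper's.
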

\begin{proof}
Let $X\in\ga_m^0$. It is a finite sum $X=\sum_{i}\beta_i X_i$, with each $X_i$ in $\l$-form or $\pi$-form and $l(X_i)>0$, that is $X_i=Y_ia_{j_i}^\sharp$. Take the unit vector $e_{n}\in\cf_m$ for any $n\in\bz$ with $n<\min{j_i}$. Obviously,
$a_{j_i}^\sharp e_{n}=0$ for each $j_i$. Then we get
\begin{equation*}
\|X+\a I\|\geq \|(X+\a I)e_{n}\|=|\a|\|e_{n}\|=|\a|\,.
\end{equation*}
For each $\eps>0$ and $X\in\ga_m$, we choose $X_\eps\in\ga_m^0$ such that $\|X-X_\eps\|<\eps$. By the triangle inequality and the above considerations, we get
\begin{align*}
|\a|\leq&\|X_\eps+\a I\|=\|(X_\eps-X)+(X+\a I)\|\\
\leq&\|X-X_\eps\|+\|X+\a I\|<\eps+\|X+\a I\|\,.
\end{align*}
The thesis follows as $\eps$ is arbitrary.
\end{proof}
\begin{prop}
\label{vncz}
$\ga_m$ is irreducible. Thus, it does not have an identity.
\end{prop}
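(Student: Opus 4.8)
The plan is to show that the only bounded operators on $\cf_m$ commuting with every element of $\ga_m$ are the scalars, which is exactly irreducibility; the statement that $\ga_m$ has no identity then follows from Proposition \ref{ide}, since an identity for $\ga_m$ would in particular be a nonzero operator commuting with $\ga_m$, hence a scalar multiple of $I$, contradicting $I\notin\ga_m$. So the real content is the irreducibility.

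First I would exhibit enough elements of $\ga_m$ to move around the canonical basis of $\cf_m$. The key building blocks are the rank-one-type operators one can manufacture from $\l$-forms: for an increasing sequence $\a=(i_1<\cdots<i_m)$ and a decreasing sequence $\b=(j_1>\cdots>j_n)$, the word $a^\dagger_{i_1}\cdots a^\dagger_{i_m}a_{j_1}\cdots a_{j_n}$ sends $e_{(j_n,\ldots,j_1)}$ (suitably ordered, i.e. the basis vector indexed by the increasing rearrangement) to $e_{(i_1,\ldots,i_m)}$, and kills basis vectors whose leading block does not match. By composing such words and using the relations in Lemma \ref{lampi1} and \eqref{comrul}, I would produce, for any two basis vectors $e_{(\a)},e_{(\b)}$ of $\cf_m$, an element $W\in\ga_m^0$ with $W e_{(\b)}=e_{(\a)}$ and $We_{(\g)}=0$ for all other basis vectors $e_{(\g)}$ appearing with the relevant ``tail'' obstruction — in other words, a partial isometry whose initial space is $\bc e_{(\b)}$. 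Concretely: pick $k\in\bz$ strictly smaller than every index occurring in $\a$ and in $\b$; then $a^\dagger_{i_1}\cdots a^\dagger_{i_m}\,a^\dagger_k\,a_k\,a_{j_1}\cdots a_{j_n}$ is (up to reordering conventions) the partial isometry $e_{(\b)}\mapsto e_{(\a)}$ annihilating everything orthogonal to $e_{(\b)}$, and it lies in $\gar^0_m$ with positive length, hence in $\ga^0_m\subset\ga_m$. I would need to check carefully that this word acts as claimed on \emph{all} basis vectors (this is where the monotone ordering conditions $(i)<(i_1,\ldots)$ etc. must be verified), which is the main bookkeeping step.

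Granting that, the irreducibility argument is standard. Suppose $B\in\cb(\cf_m)$ commutes with $\ga_m$. Taking the partial isometry $V$ with $V\Om=\Om$ and $V$ annihilating $\Om^\perp$ (e.g. $a^\dagger_k a_k$ for any $k$, which has length $2>0$ so lies in $\ga_m$): from $BV=VB$ one gets $B\Om=\langle B\Om,\Om\rangle\Om=:\lambda\Om$, i.e. $\Om$ is an eigenvector of $B$. Then for any basis vector $e_{(\a)}$, using the partial isometry $W$ with $We_{(\a)}=\Om$... actually more directly: take $W_\a\in\ga_m$ with $W_\a\Om=e_{(\a)}$ and $W_\a$ killing $\Om^\perp$; then $Be_{(\a)}=BW_\a\Om=W_\a B\Om=\lambda W_\a\Om=\lambda e_{(\a)}$. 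Since the $e_{(\a)}$ span a dense subspace, $B=\lambda I$. Hence $\ga_m'=\bc I$ and $\ga_m$ is irreducible.

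The main obstacle I anticipate is purely combinatorial: verifying that the candidate words in $\gar^0_m$ really implement the desired partial isometries between arbitrary basis vectors, i.e. that for a basis vector $e_{(\g)}$ the expression $a^\dagger_{i_1}\cdots a^\dagger_{i_m}a^\dagger_k a_k a_{j_1}\cdots a_{j_n}\,e_{(\g)}$ equals $e_{(\a)}$ when $\g=\b$ and $0$ otherwise — this requires tracking the ``$<$'' constraints in the definition of $a^\dagger_i$ through each application, and in particular choosing the auxiliary index $k$ small enough that the inner $a^\dagger_k a_k$ acts as $P_\Om$ exactly on the relevant tail. Once the action of these words on basis vectors is nailed down, the commutant computation is routine.
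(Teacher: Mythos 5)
There is a genuine gap, and it sits exactly where you flagged the ``bookkeeping'': the rank-one partial isometries you want do not exist inside $\ga_m$. The word you write down, $a^\dagger_{i_1}\cdots a^\dagger_{i_m}a^\dagger_k a_k a_{j_1}\cdots a_{j_n}$ with $k$ strictly smaller than every index of $\a$ and $\b$, is identically zero: by \eqref{comrul} one has $a^\dagger_{i_m}a^\dagger_k=0$ whenever $i_m\geq k$, and independently $a^\dagger_k a_k\,a_{j_1}\cdots a_{j_n}=0$, because $a^\dagger_ka_k$ is the projection onto basis vectors whose first (smallest) index equals $k$, while every nonzero vector in the range of $a_{j_1}\cdots a_{j_n}$ is either $\Om$ (and $a_k\Om=0$, so your auxiliary $V=a^\dagger_ka_k$ does not fix the vacuum either; the operator fixing $\Om$ is $a_ka^\dagger_k$) or a basis vector all of whose indices exceed $j_1>k$. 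If you repair this by using the admissible $\pi$-form $a^\dagger_{i_1}\cdots a^\dagger_{i_m}a_k a^\dagger_k a_{j_1}\cdots a_{j_n}$ with $k>\max\{i_m,j_1\}$, the resulting operator is a partial isometry of \emph{infinite} rank: it sends the basis vector labelled by $\b$ followed by any tail $\g$ with $\min\g>k$ (including the empty tail) to the one labelled by $\a$ followed by the same tail $\g$, and kills only the tails with $\min\g\leq k$; no choice of $k$ isolates $\bc e_{(\b)}$. This is not an accident of your particular word: the rank-one operators, and in particular $P_\Om$, live only in the strong closure $\ga_m''$ --- the paper notes that $\a^n(a_ia^\dagger_i)$ decreases to $P_\Om$ strongly but not in norm --- so no argument exhibiting them as elements of $\ga_m^0$ can succeed.

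The strategy is salvageable, but once repaired it essentially becomes the paper's proof. Since every basis vector equals $a^\dagger_{i_1}\cdots a^\dagger_{i_m}\Om$, the vacuum is cyclic for $\ga_m$ and hence separating for $\ga_m'$; it then suffices to show $P_\Om\in\ga_m''$, which the paper obtains from the strong convergence of the increasing partial sums of $\sum_{k}a^\dagger_ka_k$ to $I-P_\Om$. For $B\in\ga_m'$ this gives $B\Om=P_\Om B\Om\in\bc\Om$, whence $B\in\bc I$. Your final computation $Be_{(\a)}=W_\a B\Om=\lambda e_{(\a)}$ is fine provided $W_\a$ is only required to lie in $\ga_m''$ (where strong limits of the $\pi$-forms above do supply the needed maps, since $\ga_m'=\ga_m'''$), but as written, with $W_\a\in\ga_m^0$, the proof does not go through. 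Your deduction of non-unitality from irreducibility together with Proposition \ref{ide} is correct.
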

\begin{proof}
Since $\Om$ is separating for $\ga'_m$, we only need to prove that $P_\Om$ belongs to $\ga''_m=\pi_\om(\gar_m)^{''}$.
In fact, $\sum_{k\in\mathbb{Z}}a^\dagger_ka_k\Om=0$, and for each $n\in\mathbb{N}$, $\xi:=e_{i_1}\otimes e_{i_2}\otimes \cdots \otimes e_{i_n}$, $i_1<i_2<\dots <i_n$, $i_j\in\mathbb{Z}$, one has $\sum_{k\in\mathbb{Z}}a^\dagger_ka_k\xi=\xi$. After noticing that $\{a_k\}_{k\in\mathbb{Z}}$ have orthogonal ranges, from the above computations, one has $\sum_{k\in\mathbb{Z}}a^\dagger_ka_k$ converges pointwise strongly to $I - P_\Om$.

If $\ga_m$ had an identity, it would be a selfadjoint projection belonging to the center. But this is impossible by the first half.
\end{proof}
\noindent
Thus, the relationship between $\gar_m$ and $\ga_m$ is given by the next result.
\begin{cor}
\label{noid2}
$\gar_m=\ga_m + \mathbb{C}I$.
\end{cor}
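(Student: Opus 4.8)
The plan is to first establish the corresponding decomposition at the purely algebraic level and then pass to the norm closure, the only analytic input being Proposition \ref{ide}. For the algebraic step I would show that $\gar_m^0=\ga_m^0+\mathbb{C}I$. The inclusion $\ga_m^0+\mathbb{C}I\subseteq\gar_m^0$ is clear, so take $X\in\gar_m^0$ (the case $X=0$ being trivial). By Lemma \ref{lampi2}, $X=\sum_i\beta_iX_i$ is a finite linear combination of $\lambda$-forms and $\pi$-forms. Every $\pi$-form has length $l(X_i)=m+2+n\geq 2$, and a $\lambda$-form has length $l(X_i)=m+n\geq 0$, equalling the empty word $I$ precisely when $l(X_i)=0$; hence each summand $X_i$ either lies in $\ga_m^0$ (when $l(X_i)>0$) or equals $I$. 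Collecting the coefficients of the summands equal to $I$ into a single scalar $\alpha\in\mathbb{C}$ and grouping the remaining summands into an element $X'\in\ga_m^0$, we get $X=X'+\alpha I$, which proves the algebraic identity.

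Next I would pass to closures. The inclusion $\ga_m+\mathbb{C}I\subseteq\gar_m$ is immediate, since $\ga_m\subseteq\gar_m$, $I\in\gar_m$, and $\gar_m$ is a linear subspace. For the reverse inclusion, let $Y\in\gar_m$ and pick $(Y_n)\subset\gar_m^0$ with $\|Y_n-Y\|\to 0$. By the previous paragraph write $Y_n=X_n+\alpha_nI$ with $X_n\in\ga_m^0$ and $\alpha_n\in\mathbb{C}$. Applying Proposition \ref{ide} to $X_n-X_m\in\ga_m$ yields
\[
|\alpha_n-\alpha_m|\leq\big\|(X_n-X_m)+(\alpha_n-\alpha_m)I\big\|=\|Y_n-Y_m\|\,,
\]
so $(\alpha_n)$ is Cauchy in $\mathbb{C}$; let $\alpha$ be its limit. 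Then $X_n=Y_n-\alpha_nI\to Y-\alpha I$ in norm, and since each $X_n$ lies in $\ga_m^0\subseteq\ga_m$ and $\ga_m$ is norm closed, we obtain $Y-\alpha I\in\ga_m$, whence $Y=(Y-\alpha I)+\alpha I\in\ga_m+\mathbb{C}I$.

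I do not expect a genuine obstacle: the one point that must be handled correctly is the uniform control of the splitting $Y_n=X_n+\alpha_nI$, and this is exactly what Proposition \ref{ide} supplies — equivalently, the functional $X'+\alpha I\mapsto\alpha$ is well defined on $\gar_m^0$ (precisely because $I\notin\ga_m$) and contractive, hence extends continuously to $\gar_m$, and its kernel is $\ga_m$. As a byproduct one records that the sum $\ga_m+\mathbb{C}I$ is direct, again because $I\notin\ga_m$; thus $\gar_m$ is the minimal unitisation of the non-unital, irreducible $C^*$-algebra $\ga_m$.
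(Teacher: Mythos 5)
Your proof is correct and follows exactly the route the paper intends: the corollary is stated without proof precisely because Lemma \ref{lampi2} gives the algebraic splitting $\gar_m^0=\ga_m^0+\mathbb{C}I$ and Proposition \ref{ide} gives the estimate $\|X+\a I\|\geq|\a|$ that makes the sum closed. Your explicit Cauchy argument for the scalars, and the remark that $X+\a I\mapsto\a$ is a contractive functional with kernel $\ga_m$, is the standard and correct way to fill in the details the authors left implicit.
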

\noindent
Once having established the fine structure of the Monotone $C^*$-algebra $\gar_m$, we pass to the investigation of the convex set consisting of its shift invariant states.
The first step is to show that the fixed point subalgebra is trivial.
\begin{prop}
\label{fixalg}
The the fixed point subalgebra $\gar_m^\mathbb{Z}$ w.r.t. the action of the shift $\a$ is trivial:
$\gar_m^\mathbb{Z}=\mathbb{C}I$.
\end{prop}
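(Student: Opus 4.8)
The plan is to reduce the statement, via the structure result $\gar_m=\ga_m+\bc I$ (Corollary \ref{noid2}) together with $I\notin\ga_m$ (Proposition \ref{ide}), to a statement about $\ga_m$ alone, and then to identify the weak-operator limit of $\a^{-k}$ on $\ga_m$. First note that $\a$ maps the generators $a_i\mapsto a_{i+1}$, hence it leaves $\ga_m$ invariant and fixes $I$, so the decomposition $\gar_m=\ga_m\oplus\bc I$ (direct, because $I\notin\ga_m$) is $\a$-equivariant: every $A\in\gar_m^{\bz}$ has the form $A=X+\l I$ with $X\in\ga_m$, $\l\in\bc$, and $\a(X)=X$. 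Thus it suffices to prove that the only shift-invariant element of $\ga_m$ is $0$.

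The heart of the argument is the claim that, for every $X\in\ga_m$,
$$
\a^{-k}(X)\longrightarrow \om(X)\,I\qquad\text{as }k\to+\infty
$$
in the weak operator topology, $\om$ being the Fock vacuum state. By a $3\eps$-argument (using that the $\a^{-k}$ are isometric and that $\ga_m^0$, the linear span of the reduced $\l$- and $\pi$-forms of length $>0$ by Lemmas \ref{lampi2} and \ref{lampi3}, is dense in $\ga_m$), it is enough to verify this on a single reduced form $Y$ with $l(Y)>0$, and for that one computes the action of $\a^{-k}(Y)$ on the canonical basis vectors $e_{(\mathbf a)}$ of $\cf_m$. If $Y$ ends on the right with an annihilator, then $\a^{-k}(Y)$ ends with $a_{q-k}$, which annihilates any fixed $e_{(\mathbf a)}$ as soon as $k$ is large; if $Y$ ends with a creator but is not of the shape $a_wa^\dagger_w$, then for $k$ large $\a^{-k}(Y)e_{(\mathbf a)}$ is a basis vector whose least entry tends to $-\infty$, hence eventually orthogonal to any fixed $e_{(\mathbf b)}$; in both cases $\a^{-k}(Y)\to0$ weakly. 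The single exceptional family is $Y=a_wa^\dagger_w$, where the Monotone commutation relation gives $\a^{-k}(a_wa^\dagger_w)=a_{w-k}a^\dagger_{w-k}=I-\sum_{l\le w-k}a^\dagger_la_l$, and the subtracted projection tends strongly to $0$ as $k\to+\infty$, so $\a^{-k}(a_wa^\dagger_w)\to I$. Consequently, writing $X_0\in\ga_m^0$ as a finite linear combination of reduced forms, the weak limit of $\a^{-k}(X_0)$ equals $cI$ with $c$ the sum of the coefficients of the $a_wa^\dagger_w$-terms; since exactly those terms contribute to $\langle X_0\Om,\Om\rangle$ (any other reduced form either kills $\Om$ or sends it into $\bigoplus_{k\ge1}\ch_k$), we get $c=\om(X_0)$, which proves the claim.

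To conclude, if $X\in\ga_m$ and $\a(X)=X$ then $\a^{-k}(X)=X$ for all $k$, so the displayed limit forces $X=\om(X)I$; but $X\in\ga_m$ while $I\notin\ga_m$ by Proposition \ref{ide}, hence $\om(X)=0$ and $X=0$. Therefore $\gar_m^{\bz}=\bc I$.

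The main obstacle is the second step: the case analysis of which reduced words survive the shift to $-\infty$, and the bookkeeping that identifies the surviving coefficient with $\om(X)$. It is crucial to push the shift towards $-\infty$ rather than $+\infty$: the block $a_wa^\dagger_w$ then ``empties out'' into $I$, whereas as $k\to+\infty$ it would instead collapse onto $P_\Om$; it is precisely the emergence of $I$, combined with $I\notin\ga_m$, that annihilates the invariant part, while the limit taken the other way would only yield $X=\om(X)P_\Om$ and would leave one needing the separate (and less immediate) fact that $P_\Om\notin\ga_m$.
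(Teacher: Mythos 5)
Your proof is correct, but it takes a genuinely different route from the paper's. Both arguments begin the same way, using Corollary \ref{noid2} and the $\a$-invariance of $\ga_m$ to reduce the statement to $\ga_m^{\bz}=0$. The paper then stays entirely at the level of norms: it asserts that every polynomial $X\in\ga_m^0$ satisfies $\a^{k}(X)X=0$ for $k$ large, approximates an invariant selfadjoint $Y$ by such polynomials, and derives $\|Y\|^2=\|Y^2\|=\|\a^{k}(Y)Y\|<\eps$, hence $Y=0$. You instead prove a weak-operator-limit statement, $\a^{-k}(X)\to\om(X)I$ for all $X\in\ga_m$, and conclude from uniqueness of weak limits together with $I\notin\ga_m$ (Proposition \ref{ide}). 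Your route is longer, but it is also more careful on exactly the point where the paper's argument is fragile: the words $a_wa_w^\dagger$ lie in $\ga_m^0$ (they are $\pi$-forms of length $2$), and for them the paper's key claim fails, since $\a^{k}(a_wa_w^\dagger)\,a_wa_w^\dagger\,\Om=\Om$ for every $k$, so this product never vanishes; the paper's proof as written needs a separate treatment of precisely these terms, which is what your case analysis supplies. Your observation about the direction of the shift is also apt: pushing to $-\infty$ makes $a_wa_w^\dagger$ converge to $I$, which is killed by $I\notin\ga_m$, whereas pushing to $+\infty$ produces $P_\Om$ and would require knowing $P_\Om\notin\ga_m$. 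What the paper's approach buys, when it works, is brevity and a purely $C^*$-algebraic argument with no reference to the Hilbert-space topology; what yours buys is a stronger intermediate statement (a mixing-type limit for the backward shift on $\ga_m$) and a complete treatment of the exceptional words. One small point worth making explicit in your write-up: to upgrade convergence of matrix elements against basis vectors to genuine WOT convergence you need the uniform bound $\|\a^{-k}(Y)\|=\|Y\|$, which you invoke only in passing.
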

\begin{proof}
As $\a(\ga_m)=\ga_m$, by Corollary \ref{noid2} it is enough to see that $\ga_m^\mathbb{Z}=0$. First we note that for each $X\in\ga_m^0$, there exists a sufficiently large
$k(X)\in\mathbb{N}$ such that $\alpha^{k(X)}(X)X=0$.
Now let $0\neq Y\in\ga_m$ such that $\alpha(Y)=Y$. Without loss of generality, we can take $Y$ selfadjoint with $\|Y\|\leq 1$. Let $(Y_n)_n$ be a sequence of polynomials in $\ga^0_m$ such that
$\|Y-Y_n\|<1/n$. Moreover, for each $n\in\bn$ there is $k_n\in\mathbb{N}$ s.t. $\alpha^{k_n}(Y_n)Y_n=0$. As a consequence, since $\|Y_n\|< 1+1/n$ and $\alpha$ is an automorphism,
\begin{align*}
\|Y^2\|=&\|\alpha^{k_n}(Y)Y\|\leq\|(\alpha^{k_n}(Y-Y_n))(Y-Y_n)\|+\|\alpha^{k_n}(Y-Y_n)Y_n\|\\
+&\|\alpha^{k_n}(Y_n)(Y-Y_n)\|
+\|\alpha^{k_n}(Y_n)(Y_n)\|< \frac{1}{n}\big(2+\frac{3}{n}\big)\,,
\end{align*}
which contradicts $Y\neq 0$.
\end{proof}
\noindent
We are now ready to describe all the shift invariant states on the Monotone algebra $\gar_m$. As we have $\gar_m=\ga_m+\bc I$, and the former does not contain any own identity, $\cs(\gar_m)$ is the one-point compactification of all the positive functionals on $\ga_m$ with norm less than $1$. The {\it state at infinity}
$$
\om_\infty(X +cI):=c\,,\quad X\in \ga_m\,,c\in \mathbb{C}\,,
$$
provides such a "point at infinity". It is shift invariant. Together with the Monotone vacuum,
$$
\om(Y):=\langle Y\Om,\Om\rangle\,,\quad Y\in \gar_m\,,
$$
which we recall is also shift invariant,
the structure of such invariant states is described in the following
\begin{thm}
\label{invmon}
The $*$-weakly compact set of shift invariant states on $\gar_m$ is given by
$$
\cs_\bz(\gar_m)=\{(1-\gamma)\om_\infty + \gamma\om\mid \gamma\in [0,1]\}\,.
$$
\end{thm}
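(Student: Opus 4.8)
The plan is to prove the theorem by two inclusions. The inclusion $\supseteq$ is the easy direction: both $\om_\infty$ and $\om$ are shift-invariant states on $\gar_m$ (the former by its very definition, since $\a(\ga_m)=\ga_m$ and $\a$ fixes $I$; the latter because $\Om$ is the Fock vacuum, which is $\a$-invariant), and $\cs_\bz(\gar_m)$ is convex and $*$-weakly closed, so it contains the whole segment $\{(1-\gamma)\om_\infty+\gamma\om\mid\gamma\in[0,1]\}$. The substance is the reverse inclusion: every $\psi\in\cs_\bz(\gar_m)$ must be of this form. Using Corollary \ref{noid2}, write $\gar_m=\ga_m+\bc I$, so $\psi$ is determined by its restriction to $\ga_m$, a positive functional of norm $\leq 1$, together with $\psi(I)=1$. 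Set $\gamma:=\|\psi\!\restriction_{\ga_m}\|$; then $\psi = (1-\gamma)\om_\infty + \gamma\,\psi_0$ for a suitable state-like piece, and the claim reduces to showing that the only shift-invariant positive functional on $\ga_m$ that is "carried by the Fock vacuum" is a multiple of $\om$; equivalently, $\psi\!\restriction_{\ga_m}$ is a scalar multiple of $\om\!\restriction_{\ga_m}$.

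The heart of the argument is therefore: if $\varphi$ is a shift-invariant positive functional on $\ga_m$, then $\varphi = \gamma\,\om\!\restriction_{\ga_m}$ with $\gamma=\|\varphi\|$. First I would reduce to evaluating $\varphi$ on the spanning set of $\ga_m^0$ supplied by Lemma \ref{lampi2} and Lemma \ref{lampi3}: the reduced words in $\lambda$-form and $\pi$-form of positive length. For such a word $X=Y a_{j}^{\sharp}$ of positive length, the key observation — already exploited in the proof of Proposition \ref{fixalg} — is that there is $k(X)\in\bn$ with $\alpha^{k(X)}(X)\,X = 0$, and more is true: for a word $X$ in $\lambda$- or $\pi$-form with $l(X)>0$, one has $X^*X\neq 0$ but $\alpha^{k}(X^*X)\cdot(X^*X)=0$ for $k$ large, and similarly products of distinct shifts of $X$ annihilate. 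Then shift-invariance together with the Cauchy–Schwarz inequality for the positive functional $\varphi$ forces $\varphi(X)=0$ for every such $X$ of positive length, by the standard "almost-orthogonality" / mean-ergodic trick: $\varphi(X) = \frac1n\sum_{h=1}^{n}\varphi(\alpha^{k_h}(X))$ for well-separated $k_h$, and $|\varphi(\sum_h \alpha^{k_h}(X))|\leq \|\varphi\|^{1/2}\,\|\sum_h\alpha^{k_h}(X)\|_{\varphi}$, while $\|\sum_h\alpha^{k_h}(X)\|^2$ in the GNS space of $\varphi$ grows like $O(n)$ because the cross terms vanish, so dividing by $n$ and letting $n\to\infty$ gives $\varphi(X)=0$. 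Since $\om$ also annihilates every word of positive length (indeed $\om(X)=\langle X\Om,\Om\rangle$ and $a_j^\sharp\Om$-computations kill any reduced word of positive length against $\Om$), we conclude $\varphi$ and $\gamma\,\om$ agree on a spanning set of $\ga_m^0$, hence on $\ga_m$ by density and continuity.

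Assembling: for arbitrary $\psi\in\cs_\bz(\gar_m)$, restrict to $\ga_m$ to get $\varphi:=\psi\!\restriction_{\ga_m}$, invariant and positive with $\|\varphi\|=:\gamma\in[0,1]$; the previous paragraph gives $\varphi=\gamma\,\om\!\restriction_{\ga_m}$; then for $X\in\ga_m$, $c\in\bc$, $\psi(X+cI)=\varphi(X)+c = \gamma\,\om(X) + c = \gamma\,\om(X+cI) + (1-\gamma)\om_\infty(X+cI)$, i.e. $\psi=(1-\gamma)\om_\infty+\gamma\om$, completing the reverse inclusion. I expect the main obstacle to be the precise bookkeeping in the "cross terms vanish" step: one must verify, using the explicit Monotone relations \eqref{comrul} and the $\lambda/\pi$-normal forms, that for a fixed reduced word $X$ of positive length and sufficiently separated shifts $k_1<\cdots<k_n$, the element $\alpha^{k_a}(X)^*\alpha^{k_b}(X)$ (and $\alpha^{k_a}(X)\alpha^{k_b}(X)$) vanishes for $a\neq b$ — this is where the combinatorics of which index must exceed which, encoded in Lemma \ref{lampi1}, does the real work, and it should be organized as a lemma on "eventual orthogonality of translates" before the ergodic averaging is invoked.
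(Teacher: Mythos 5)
Your overall skeleton (write $\gar_m=\ga_m+\bc I$, evaluate an invariant state on the reduced words, and kill words by norm-ergodic averaging) matches the paper's, but there is a genuine gap at the decisive step, and as written your argument proves a false statement. You claim that every reduced word $X$ of positive length has eventually orthogonal translates (cross terms $\a^{k_a}(X)^*\a^{k_b}(X)$ vanish for well-separated shifts) and that $\om$ annihilates every word of positive length. Both claims fail for the $\pi$-form words of length two, $X=a_ja_j^\dagger$. These are orthogonal projections with $a_ja_j^\dagger\Om=\Om$, so $\om(a_ja_j^\dagger)=1\neq 0$; and by \eqref{b-01b} one has $a_{j+k}^\dagger\,a_ja_j^\dagger=a_{j+k}^\dagger$, hence $\a^{k}(X)X=a_{j+k}a_{j+k}^\dagger\,a_ja_j^\dagger=a_{j+k}a_{j+k}^\dagger\neq 0$ for every $k\geq 0$: products of distinct shifts do not annihilate, the Ces\`aro means of $a_ja_j^\dagger$ converge to $P_\Om$ only strongly and not in norm (this is exactly the computation by which the paper shows $(\gar_m,\a)$ is not uniquely ergodic), and no Cauchy--Schwarz averaging trick can force $\f(a_ja_j^\dagger)=0$. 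Indeed, if your argument went through, every shift-invariant positive functional on $\ga_m$ would vanish, giving $\cs_\bz(\gar_m)=\{\om_\infty\}$ --- contradicted by the invariant state $\om$ itself.

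The words $a_ja_j^\dagger$ are precisely where the free parameter $\gamma$ of the theorem lives, and they must be isolated. The paper writes a generic element of $\gar_m^0$ as $X=cI+Y+\sum_{i=1}^l\b_i a_{j_i}a_{j_i}^\dagger$, where $Y$ collects the $\l$-forms of positive length and the $\pi$-forms of length $>2$; every such word begins with a creator or ends with an annihilator, so Proposition \ref{sum2} applies and yields $\big\|\sum_{h=1}^n\a^{k_h}(Y)\big\|=O(\sqrt{n})$, hence $\frac1n\sum_{k}\a^{k}(Y)\to 0$ in norm and $\f(Y)=0$ for any invariant $\f$ (your averaging idea is sound on this part, though it is cleaner in norm than in the GNS space). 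Then shift invariance makes $\gamma:=\f(a_ia_i^\dagger)$ independent of $i$, positivity together with $0\leq a_ia_i^\dagger\leq I$ gives $\gamma\in[0,1]$, and comparison with $\om(X)=c+\sum_i\b_i$ and $\om_\infty(X)=c$ yields $\f=(1-\gamma)\om_\infty+\gamma\om$. In particular your identification of the parameter as the norm of the restriction of $\psi$ to $\ga_m$ should be replaced by $\gamma=\f(a_ia_i^\dagger)$.
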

\begin{proof}
As any state $\f\in\cs(\gar_m)$ is uniquely determined by its values on the dense subalgebra $\gar_m^0$, we consider a generic element
$$
X=cI+Y+\sum_{i=1}^l\b_ia_{j_i}a_{j_i}^\dagger\,.
$$
Here, $Y\in\ga_m^0$ is a finite combination of reduced words in $\l$-form with lengths $>\!\!0$, and in $\pi$-form with lengths $>\!\!2$.
For the vacuum, we get $\om(X)=c+\sum_{i=1}^l\b_i$,
whereas for the state at infinity, $\om_\infty(X)=c$. Consider now an arbitrary $\f\in\cs_\bz(\gar_m)$. By Proposition \ref{sum2},
$$
\lim_n\bigg(\frac1{n}\sum_{k=0}^{n-1}\a^k(Y)\bigg)=0
$$
in norm. By invariance, we get
\begin{align*}
&\f(Y)=\frac1{n}\sum_{k=0}^{n-1}\f(\a^k(Y))
=\f\bigg(\frac1{n}\sum_{k=0}^{n-1}\a^k(Y)\bigg)\\
=&\lim_n\bigg(\frac1{n}\sum_{k=0}^{n-1}\f\big(\a^k(Y)\big)\bigg)
=\f\left(\lim_n\bigg(\frac1{n}\sum_{k=0}^{n-1}\a^k(Y)\bigg)\right)=0\,.
\end{align*}
Denote $\g:=\f(a_ia^\dagger_i)$, $i\in\bz$ the common value of $\f$ on the reduced words of the form $aa^\dagger$. We get
$$
\f(X)=c+\g\sum_{i=1}^l\b_i=(1-\g)\om_\infty(X)+\g\om(X)\,.
$$
\end{proof}
\noindent
The next Proposition shows that $\gg_m$ and $\gar_m$ are equal. The result is analogous to the Boolean case (cf. \cite{CrFid}, Proposition 7.1), whereas differs from the $q$-deformed cases including the Free one corresponding to $q=0$. In fact, for the latter the von Neumann algebra generated by the annihilators and that generated by their selfadjoint parts act irreducibly and in standard form on the Fock space respectively, see e.g. \cite{BS, DykFid} and the references cited therein.
\begin{prop}
\label{invmon2}
$\gg_m$ and $\gar_m$ coincide.
\end{prop}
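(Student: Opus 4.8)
The plan is to establish the only nontrivial inclusion $\gar_m\subseteq\gg_m$, the reverse one $\gg_m\subseteq\gar_m$ being obvious from $s_i=a_i+a_i^\dagger\in\gar_m$. Since $\gar_m$ is generated by the annihilators $\{a_i\}_{i\in\bz}$, it suffices to show $a_i\in\gg_m$ for each $i$; and because $\gg_m$ is a $*$-algebra, this is the same as producing $a_i^\dagger$ as an explicit polynomial in the self-adjoint generators $s_j$. So the whole proof reduces to one short algebraic identity.

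The key computation I would carry out is as follows. From $a_{i+1}^2=(a_{i+1}^\dagger)^2=0$ (these are instances of \eqref{comrul}) together with $a_{i+1}a_{i+1}^\dagger=I-\sum_{k\leq i+1}a_k^\dagger a_k$, one gets
$$
s_{i+1}^2=a_{i+1}a_{i+1}^\dagger+a_{i+1}^\dagger a_{i+1}=I-\sum_{k\leq i}a_k^\dagger a_k\,,
$$
the series being strongly convergent (it is the orthogonal projection onto $\overline{\spn}\{e_{(\a)}\mid\a=(i_1,\dots),\ i_1\leq i\}$, as in the proof of Proposition \ref{vncz}). On the other hand, since $a_ia_{i+1}=0$ and $a_ia_{i+1}^\dagger=0$ by \eqref{comrul}, the $a_i$-summand of $s_i$ is killed on the right by $s_{i+1}$, so that
$$
s_is_{i+1}=a_i^\dagger(a_{i+1}+a_{i+1}^\dagger)=a_i^\dagger s_{i+1}\,.
$$
Multiplying this on the right by $s_{i+1}$ and inserting the first identity,
$$
s_is_{i+1}^2=a_i^\dagger s_{i+1}^2=a_i^\dagger-\sum_{k\leq i}a_i^\dagger a_k^\dagger a_k\,.
$$
Finally, $a_i^\dagger a_k^\dagger=0$ for every $k\leq i$ (once more \eqref{comrul}), so the tail sum vanishes term by term and one is left with the clean formula $a_i^\dagger=s_is_{i+1}^2\in\gg_m$. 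Taking adjoints gives $a_i\in\gg_m$ for all $i$, hence $\gar_m\subseteq\gg_m$, and the proposition follows.

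The one point that needs care — and where I expect the only real subtlety to lie — is that $\sum_{k\leq i}a_k^\dagger a_k$ does not converge in norm (each summand has norm one), so $s_{i+1}^2$ is not literally a finite word in the generators and the manipulations above are a priori only formal. What legitimates them is that, after left multiplication by the bounded operator $a_i^\dagger$, strong continuity lets me pass $a_i^\dagger$ inside the sum, and then every single term $a_i^\dagger a_k^\dagger a_k$ is already zero by the triangular vanishing relation $a_i^\dagger a_k^\dagger=0$, $k\leq i$. This "triangular" feature of the Monotone relations is precisely what makes $\gg_m=\gar_m$ hold here (and, as the text notes, analogously in the Boolean case, cf.\ \cite{CrFid}, Proposition 7.1), in contrast with the $q$-deformed situations where the self-adjoint parts generate a strictly smaller, standard-form, subalgebra; I would add a sentence to that effect.
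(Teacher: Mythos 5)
Your proof is correct and lands on exactly the same key identity as the paper, namely $a_i^\dagger=s_is_{i+1}^2$, but you justify the middle step by a different (and analytically heavier) route. The paper never touches the infinite series: it first proves the \emph{finite} operator identity \eqref{i+1}, $a^\dagger_{i+1}a_{i+1}=a_ia^\dagger_i-a_{i+1}a^\dagger_{i+1}$, by direct evaluation on basis vectors, so that $s_{i+1}^2=a_{i+1}a^\dagger_{i+1}+a^\dagger_{i+1}a_{i+1}=a_ia^\dagger_i$ is a finite word in the generators, and then concludes $s_is_{i+1}^2=(a_i+a^\dagger_i)a_ia^\dagger_i=a^\dagger_i$ from \eqref{b-le01a} and \eqref{b-01b}. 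You instead write $s_{i+1}^2=I-\sum_{k\leq i}a_k^\dagger a_k$ using the strongly (but not norm) convergent commutation relation, and then kill the tail term by term after left-multiplying by $a_i^\dagger$; your justification of this interchange (left multiplication by a fixed bounded operator is SOT-continuous, and $a_i^\dagger a_k^\dagger=0$ for $k\leq i$ by \eqref{comrul}) is valid, and you are right to flag it as the delicate point. What the paper's version buys is that the whole argument stays inside the $*$-algebra $\gar_m^0$ of finite words, with no limit argument at all; what your version buys is that it makes explicit \emph{why} the triangular structure of the Monotone relations is responsible for $\gg_m=\gar_m$, which is a worthwhile remark to keep. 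Either way the reverse inclusion $\gg_m\subseteq\gar_m$ is, as you say, immediate.
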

\begin{proof}
We show that the generators of $\gar_m$ belong to $\gg_m$.
To this aim we firstly notice that, for each $i\in\mathbb{Z}$,
\begin{equation}
\label{i+1}
a^\dag_{i+1}a_{i+1}=a_ia^\dag_i-a_{i+1}a^\dag_{i+1}\,.
\end{equation}
Indeed, both $a^\dag_{i+1}a_{i+1}$ and $a_ia^\dag_i-a_{i+1}a^\dag_{i+1}$ vanish on $\Om=e_\emptyset$. Furthermore, for $n\in\mathbb{N}$ and $\xi:=
e_{(j_1,j_2,\cdots, j_n)}$, 
$$
a^\dag_{i+1}a_{i+1}\xi=\left\{
        \begin{array}{ll}
          \xi & \text{if}\,\, i+1=j_1\,,\\
          0 & \text{otherwise}\,,
        \end{array}
        \right.
$$
and
$$
(a_ia^\dag_i-a_{i+1}a^\dag_{i+1})\xi=\left\{
        \begin{array}{lll}
          0 & \text{if}\,\, i\geq j_1\,,\\
          \xi & \text{if}\,\, i+1=j_1\,,\\
          0 & \text{if}\,\, i+1< j_1\,.
        \end{array}
        \right.
$$
This implies that $a^\dagger_i\in\gg_m$, $i\in\mathbb{Z}$, since, from \eqref{i+1} and \eqref{b-01b} it follows
$$
s_is_{i+1}^2=(a_i+a^\dag_i)(a_{i+1}a^\dag_{i+1}+ a^\dag_{i+1}a_{i+1})=(a_i+a^\dag_i)a_ia^\dag_i=a^\dag_i\,.
$$
\end{proof}
\noindent
We end the present section by noticing that the condition \eqref{commcz} is not satisfied for the Monotone quantisation, if $U=U_g$, and for the canonical basis $\{e_j\mid j\in\bz\}$ of $\ell^2(\bz)$,
$$
U_ge_j=e_{g(j)}\,,\quad g\in\bp_{\bz}\,, j\in\bz
$$
is the usual representation of $\bp_{\bz}$ on $\ell^2(\bz)$. Thus, the permutations do not act on $\gar_m$ in a canonical way as Bogoliubov automorphisms.

\section{examples}
\label{exa}
\subsection{Anti-Monotone case}
\label{antimon}
The Anti-Monotone quantisation is associated with the Yang-Baxter-Hecke operator $T:\ell^2(\bz)\otimes\ell^2(\bz)\to\ell^2(\bz)\otimes\ell^2(\bz)$ given by
\begin{equation*}
T_{am}(e_i\otimes e_j):=\left\{
\begin{array}{ll}
                       0 & \text{if}\,\, i>j\,,\\
                       -(e_i\otimes e_j) & \text{if}\,\, i\leq j\,.
                     \end{array}
                     \right.
\end{equation*}
It is easy to show that, if one defines the unitary reflection $R$ on the canonical basis of $\ell^2(\bz)$ given by
$$
Re_j:=e_{-j}\,,\quad j\in\bz\,,
$$
we have
$$
T_{am}=(R\otimes R)T_m(R\otimes R)\,.
$$
Denote $a_m$ and $a_{am}$ the Monotone and Anti-Monotone annihilator, respectively. Thus, $\underbrace{R\otimes \cdots \otimes R}_{n\,\, \text{times}}$ acting on 
$\ell^2(\bz)^{\otimes n}$, uniquely defines
a unitary operator
$$
\cf_{T_m,T_{am}}(R):\cf_{T_m}(\ell^2(\bz))\to\cf_{T_{am}}(\ell^2(\bz))
$$
(whose adjoint is $\cf_{T_{am},T_{m}}(R)$)
satisfying
$$
\cf_{T_m,T_{am}}(R)a_m(f)\cf_{T_m,T_{am}}(R)^*=a_{am}(Rf)\,,\quad f\in\ell^2(\bz)\,.
$$
We immediately conclude that all the results of Section \ref{monsec} apply also to the Anti-Monotone case.

\subsection{uniquely mixing dynamical systems}
\label{ecfi}

Due to Lemma \ref{sum1}, we see that it is always possible to associate to any Yang-Baxter-Hecke operator, a concrete $C^*$-dynamical system enjoying the very strong ergodic property of the unique mixing w.r.t. the fixed point subalgebra, even if \eqref{unifbound} is not satisfied. This case covers the most known one, that is the Bose quantisation case for which \eqref{unifbound} does not hold. For such a purpose, denote $E^n_T$ the selfadjoint projection onto the $n$-particle subspace $\ch^{n}_T$, and consider the concrete
$C^*$-algebra $\ge_T$ acting on $\cf_T(\ch)$ and generated by the identity $I$ together with elements of the form
\begin{equation*}
X=E^m_Ta^\sharp(e_{i_1})a^\sharp(e_{i_2})\cdots a^\sharp(e_{i_r})E^n_T\,,
\end{equation*}
where $m,n,r=0,1,2,\dots\,\,.$

Let $ \ch=\ell^2(\bz)$. Suppose that the Yang-Baxter-Hecke operator $T$ satisfies \eqref{commcz}, and in addition that the sum appearing in \eqref{commrul} is a finite one. In this situation, the shift $\a$ acts also on $\ge_T$ by restriction, because all the projections $E^n_T$ are invariant by construction.
\begin{prop}
\label{mixing2}
Let $T$ be a Yang-Baxter-Hecke selfadjoint operator on $\ch=\ell^2(\mathbb{Z})$. Suppose that the sum in \eqref{commrul} is finite, and furthermore \eqref{commcz} holds. Then the $C^*$-dynamical system $(\ge_T, \alpha)$ is uniquely mixing w.r.t. the fixed-point subalgebra.
\end{prop}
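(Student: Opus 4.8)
The plan is to repeat, with one modification, the proof of Theorem \ref{mixing}. Although here $M_T$ may be infinite (as in the Bose case), the spectral projections $E^n_T$ confine every computation to finitely many particle spaces $\ch^k_T$, on each of which the $R^{(k)}$ are bounded because $T$, being of Hecke type, is bounded. First I would set up the fixed--point algebra: denote by $\cd$ the unital, norm-closed $*$-subalgebra of $\ge_T$ generated by the mutually orthogonal projections $\{E^n_T\mid n\geq 0\}$; since $\a$ is implemented by the unitaries $\cf_T(U^k)$, which commute with each $E^n_T$, one has $\cd\subseteq\ge_T^{\bz}$. Because the sum in \eqref{commrul} is finite, every word $a^\sharp(e_{i_1})\cdots a^\sharp(e_{i_r})$ rewrites as a finite linear combination of Wick-ordered words of length $\leq r$ (a word of length $0$ being a scalar multiple of $I$). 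Hence the dense $*$-subalgebra $\spn\big(\{I\}\cup\{E^m_T a^\sharp(e_{i_1})\cdots a^\sharp(e_{i_r})E^n_T\}\big)$ of $\ge_T$ is spanned by elements of two kinds: (a) scalar multiples of $E^n_T$, which lie in $\cd$ and are $\a$-invariant; and (b) elements $X=E^m_T W E^n_T$ with $W$ a Wick-ordered word of length $\geq 1$, so that either the leftmost letter of $W$ is a creator, or $W$ has no creator and its rightmost letter is an annihilator.

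The core step is to show that, for every $X$ as in (b) and every strictly increasing sequence $(l_h)_{h\geq 1}\subset\bn$, one has $\|\tfrac1n\sum_{h=1}^n\a^{l_h}(X)\|\to 0$ as $n\to\infty$. Assume first $a^\sharp(e_{i_1})=a^\dagger(e_{i_1})$ in $W=a^\sharp(e_{i_1})\cdots a^\sharp(e_{i_r})$. Since $\a$ commutes with the $E^k_T$ and preserves the particle number, on a unit vector $\xi\in\ch^n_T$ one has $\sum_{h=1}^n\a^{l_h}(X)\xi=E^m_T\sum_{h=1}^n a^\dagger(e_{i_1+l_h})\eta_h$, where $\eta_h:=\a^{l_h}(a^\sharp(e_{i_2})\cdots a^\sharp(e_{i_r}))\xi$; all the $\eta_h$ lie in one and the same particle space $\ch^k_T$ with $k\leq n+r-1$, and by \eqref{creator} applied letter by letter $\|\eta_h\|_T\leq C$ uniformly in $h$, with $C$ depending on $n,r$ only through $\max_{j\leq n+r}\|R^{(j)}\|<\infty$. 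The vectors $e_{i_1+l_h}$, $h=1,\dots,n$, are orthonormal, so the computation of Lemma \ref{sum1}---whose only use of \eqref{unifbound} is the inequality $P^{(k+1)}_T\leq M_T(I\otimes P^{(k)}_T)$, which here reads with the finite number $\|R^{(k+1)}\|$ in place of $M_T$---yields $\|\sum_{h=1}^n a^\dagger(e_{i_1+l_h})\eta_h\|_T\leq C'\sqrt n$; since $E^m_T$ is a contraction, $\|\sum_{h=1}^n\a^{l_h}(X)\|\leq C'\sqrt n$ and the Cesàro average is $O(n^{-1/2})$. If instead $W$ has no creator, its rightmost letter is an annihilator and the estimate follows by passing to adjoints, exactly as in Proposition \ref{sum2}.

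To conclude I would argue as in Theorem \ref{mixing}. On the dense $*$-subalgebra above define a linear map by $E^{\bz}=\id$ on the span of the $E^n_T$ (consistently, since the averages are stationary there) and $E^{\bz}=0$ on the span of the generators of kind (b); by the previous two steps the unital contractions $\tfrac1n\sum_{h=1}^n\a^{l_h}(\,\cdot\,)$ converge to $E^{\bz}$ in norm on this subalgebra, for every strictly increasing $(l_h)\subset\bn$, and a $3$-$\eps$ argument extends the convergence to all of $\ge_T$ with a limit independent of the chosen subsequence. The limit map $E^{\bz}$ is then unital, positive, $\a$-invariant and idempotent, with range $\ge_T^{\bz}$ (which therefore equals $\cd$); by Tomiyama's theorem it is a conditional expectation, and evidently the only $\a$-invariant one. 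Since the averages converge in norm to $E^{\bz}(Y)$ along every subsequence, Proposition 2.3 of \cite{F22} applies and gives the $E^{\bz}$-mixing property $\lim_N f(\a^N(Y))=f(E^{\bz}(Y))$ for every $Y\in\ge_T$ and $f\in\ge_T^*$.

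The main obstacle is the core step: one has to verify that the sole role of hypothesis \eqref{unifbound} in Lemma \ref{sum1} (hence in Proposition \ref{sum2}) was to control the operators $R^{(k)}$ uniformly, and that within any fixed generator of $\ge_T$ the particle number ranges only over $\{0,\dots,n+r\}$, so that $M_T$ may legitimately be replaced by the finite constant $\max_{j\leq n+r}\|R^{(j)}\|$. The rest is the bookkeeping of Wick ordering needed to reduce an arbitrary generator to an element of $\cd$ plus terms of the two shapes treated in Proposition \ref{sum2}.
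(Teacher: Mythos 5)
Your proposal is correct and follows essentially the same route as the paper's (sketched) proof: Wick-order the generators using the finiteness of the sum in \eqref{commrul}, reduce by adjoints to words led by a creator, and observe that the projections $E^n_T$ confine the particle number so that the constant $M_T$ of \eqref{unifbound} in Lemma \ref{sum1} and Proposition \ref{sum2} can be replaced by a finite bound $\|R^{(n(X))}\|$ depending only on the generator, before invoking Proposition 2.3 of \cite{F22}. The extra detail you supply on the fixed-point subalgebra and the invariant conditional expectation is a legitimate filling-in of what the paper leaves implicit, not a different argument.
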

\begin{proof}
The proof, which we are going to sketch, follows the same lines of Theorem \ref{mixing}.
By a standard approximation argument, we can reduce the matter to managing only the algebraic generators. Thus,
as the sum in \eqref{commrul} is finite, each nonempty word (not necessarily reduced) in annihilators and creators can be expressed in the Wick form. In addition, by passing to the adjoint, we can suppose that all the involved nontrivial words have the form $E^m_Ta^\dagger(e_{i})Y$, where $Y$ is another word, either starting with some $a^\sharp$ or with some projection
$E^m_T$. Considering a general linear combination $X$ of words as above, and
reasoning as in the proof of Proposition \ref{sum2}, there exists a sufficiently large natural number $n(X)$ such that
$$
\left \|\sum_{h=1}^n \a^{k_h}(X)\right \|\leq \sqrt{n}\|R^{(n(X))}\|\,.
$$
The proof now follows as in Theorem \ref{mixing}.
\end{proof}

\subsection{Bose case}

The case of the CCR corresponding to the Bose case, is covered by considering the flip map $\sigma: \ch\otimes \ch \rightarrow \ch\otimes \ch$ s.t. $\sigma(x \otimes y):=y \otimes x$. The flip map generates in a canonical way a representation of the permutations on the whole $\ch^{\otimes n}$, denoted by an abuse of notation by $\pi$.
In this case,
$T_\text{Bose}=\s$, and the corresponding $P$ are given by (a multiple of) the classical symmetrizator on each $\ch^{\otimes n}$, i.e.
$$
P^{(n)}_\text{Bose}= \sum_{\pi\in\bp_n}\pi\,.
$$
It is well known that (even for the simple case of one degree of freedom for which $\ch=\bc$) the CCR cannot be realised by bounded operators. Thus, the above analysis cannot apply.
However, by considering the Weyl construction for the CCR via the Weyl algebra, it is still possible to investigate the ergodic properties enjoyed by the Bose particles. For the case
$\ch=\ell^2(\bz)$, it is easy to see that the shift and the permutations naturally act on the Weyl CCR algebra. It is well known that there exists plenty of shift invariant states. In addition, by using the results on the Weyl algebra in \cite{LRT} (see also \cite{BR1}), the structure of the symmetric states are covered by the quantum De Finetti Theorem in \cite{St2}.

It is worth noticing that, unlike the dynamical systems based on the Weyl algebra, the particular Bose-like system $(\ge_\text{Bose},\a)$ of the Subsection \ref{ecfi} is uniquely mixing w.r.t. the fixed point subalgebra
$$
\ge_\text{Bose}^\mathbb{Z}\sim C_0(\bn)+\bc\idd\sim C(\bn_\infty)\,,
$$
even if it is unknown to the authors potential physical applications of such a system.

 \subsection{Fermi case}

The Fermi case associated to the so called CAR algebra, also falls into the class we are dealing with. In fact $T_\text{Fermi}=-\sigma$, the latter being the flip unitary, and
$$
P^{(n)}_\text{Fermi}= \sum_{\pi\in\bp_n}\epsilon(\pi)\pi\,,
$$
where $\epsilon(\pi)$ denotes the sign of the permutation $\pi$.
In this case, one recognises that, up to normalisation factors, the $T_\text{Fermi}$-deformed Fock space as the Fermi Fock space over $\ell^2(\mathbb{Z})$, and the unital $C^*$-algebra generated by $\{a_i \mid i\in\mathbb{Z}\}$ as the CAR algebra over $\mathbb{Z}$. It is well known that in such a case, creators and annihilators are bounded, that is for each $i$, $\|a_i\|=\|a^\dagger_i\|=1$ (see, e.g. \cite{BR1}, Proposition 5.2.2), even if \eqref{unifbound} does not hold. Indeed, suppose that \eqref{unifbound} holds. Then it would exist $M_T<\infty$ s.t. for each $n$, $\|R^{(n)}\|\leq M_T$. By \eqref{pnorm} and \eqref{Pn} one should obtain that for any $n\in\mathbb{N}$,
$$
(n+1)!=\big\|P_\text{Fermi}^{(n+1)}\big\|=\big\|(I\otimes P_\text{Fermi}^{(n)})R^{(n+1)}\big\|\leq n!M_T\,,
$$
in contradiction to the unboundedness of natural integers.

For $\ch=\ell^2(\bz)$, the shift as well as the permutation group act on $\gar_\text{Fermi}$. As for the Bose case, it is well known that there exists plenty of shift invariant states. The structure of the symmetric states is instead fully covered by the quantum De Finetti Theorem in \cite{CrF}.

\section{E-mixing for the Boolean shift}
\label{boolsec}

We investigate in some detail the ergodic properties of the Boolean $C^*$-algebra, even if the general results of Section \ref{sec3} cannot be directly applied to this case.

Let $\ch$ be a complex Hilbert space. Recall that the Boolean Fock space over $\ch$ is given by $\G(\ch):=\mathbb{C}\oplus \ch$,
where the vacuum vector $\Om$ is $(1,0)$.
On $\Gamma(\ch)$ we define the creation and annihilation operators (denoted again with $a^\dagger, a$), respectively given for $f\in \ch$, by
$$
a^\dagger(f)(\alpha\oplus g):=0\oplus \alpha f,\,\,\,\, a(f)(\alpha\oplus g):=\langle g,f\rangle_\ch \oplus 0,\,\,\, \alpha\in\mathbb{C},\, g\in\ch.
$$
They are mutually adjoint, and satisfy the following relations
\begin{align}
\label{boolrel}
&a(f)a(g)=a^\dagger(f)a^\dagger(g)=0\,,\quad f,g\in\ch\, \nn\\
&a(f)a^\dagger(g)=\langle g,f\rangle P_\Om=\langle g,f\rangle (I - \sum_{j\in J}a^\dagger(e_j)a(e_j))\,,
\end{align}
for any orthonormal basis $\{e_j\mid j\in J\}$ of the involved Hilbert space. The Boolean Fock space is also obtained as a $T$-deformed one, simply by taking $T=-I$, that is $\Gamma(\ch)=\cf_{-I}(\ch)$. Therefore, $P_\text{Boole}^{(n)}=0$ whenever $n\geq 2$. The commutation \eqref{boolrel} is just the analogous of \eqref{commrul} for
$T=-I$. If $\text{dim}(\ch)=+\infty$, the sum in \eqref{boolrel} is infinite and the result in Section \ref{sec3} cannot be directly applied for the investigation of the ergodic properties of the Boolean $C^*$-algebra $\gar_\text{Boole}$.

We put $\ch=\ell^2(\mathbb{Z})$, and in this situation $\Gamma(l^2(\mathbb{Z}))=\ell^2(\{\#\}\cup\mathbb{Z})$, where the vacuum vector and the vacuum state are $\Om=e_\#$
and $\om_\#:=\langle\,{\bf \cdot}\,e_\#,e_\#\rangle$, respectively. In \cite{CrFid} it was shown that
$$
\ck(\G(\ell^2(\{\#\}\cup\bz))+ \mathbb{C}I=\gar_\text{Boole}=:\gpb\,,
$$
and $\ck(\G(\ell^2(\{\#\}\cup\bz))$ denotes the compact linear operators on $\ell^2(\{\#\}\cup\bz)$.
In addition, the $C^*$-subalgebra generated by the selfadjoint part of the annihilators coincides with the Boolean one: $\gg_\text{Boole}=\gar_\text{Boole}$. The shift, as well as the permutations $\bp_\bz$ naturally act on $\gpb$ as Bogoliubov automorphisms. Denote $\gpb^{\bp_\bz}$, $\gpb^{\bz}$ the fixed point subalgebras w.r.t. the actions of the permutations and the shift, that is the exchangeable and the invariant $C^*$-subalgebra, respectively. We get
\begin{prop}
\label{fixal}
For the fixed point subalgebras $\gpb^{\bp_\bz}$, $\gpb^{\bz}$, one has
$$
\gpb^{\bp_\bz}=\gpb^{\bz}= \mathbb{C}P_\# \oplus \mathbb{C}P_\#^\bot\,,
$$
where, for each $i$, $P_\#=a_ia^\dagger_i$ denotes the orthogonal projection onto the linear span of $e_\#$.
\end{prop}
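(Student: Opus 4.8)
The plan is to prove the two inclusions $\bc P_\#\oplus\bc P_\#^\bot\subseteq\gpb^{\bp_\bz}\cap\gpb^\bz$ and $\gpb^{\bp_\bz}\cup\gpb^\bz\subseteq\bc P_\#\oplus\bc P_\#^\bot$, and then conclude by squeezing. For the first, I would observe that the shift unitary $U$ on $\ell^2(\{\#\}\cup\bz)$ implementing $\a$ fixes the vacuum $e_\#=\Om$ (Proposition \ref{rash}), and so does every permutation unitary $U_g$, $g\in\bp_\bz$, since a finite permutation of $\bz$ leaves the extra index $\#$ untouched; hence all these unitaries commute with the rank-one projection $P_\#$ onto $\bc e_\#$ and with $P_\#^\bot=I-P_\#$. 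Since $\bc P_\#\oplus\bc P_\#^\bot=\bc P_\#+\bc I\subseteq\ck(\G(\ell^2(\{\#\}\cup\bz)))+\bc I=\gpb$, this already gives one inclusion, for both groups at once.

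For the reverse inclusion I would use the structure $\gpb=\ck(\G(\ell^2(\{\#\}\cup\bz)))+\bc I$ from \cite{CrFid}. Write an invariant element as $X=K+cI$ with $K$ compact; because $\a(I)=I$ and likewise $U_gIU_g^*=I$, the element $X$ is invariant iff $K$ is, i.e. iff $K$ commutes with $U$ (resp. with every $U_g$). Decompose $\ell^2(\{\#\}\cup\bz)=\bc e_\#\oplus\ell^2(\bz)$ and write $K$ as a $2\times2$ operator matrix with corners $a\in\bc$, $b\in\cb(\ell^2(\bz),\bc)$, $c_0\in\cb(\bc,\ell^2(\bz))$, $d\in\cb(\ell^2(\bz))$. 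Relative to this decomposition $U=1\oplus S$ with $S$ the bilateral shift on $\ell^2(\bz)$, and $KU=UK$ forces $bS=b$, $Sc_0=c_0$, $Sd=dS$; the first two say that the vectors representing $b$ and $c_0$ are fixed by $S$, hence vanish, while the third says $d$ lies in the commutant $\{S\}'$, which is the maximal abelian algebra of Fourier multipliers $\cong L^\infty(\bt)$. Since $d$ is compact and no nonzero element of $L^\infty(\bt)$ induces a compact multiplication operator on $L^2(\bt)$, we get $d=0$; thus $K=aP_\#$ and $X\in\bc P_\#+\bc I$. The permutation case runs the same way: commutation with all finite permutations forces the vectors representing $b$ and $c_0$ to have constant coordinates, hence (being in $\ell^2(\bz)$) to vanish, and forces the matrix of $d$ to have constant diagonal entries and constant off-diagonal entries; boundedness rules out a nonzero off-diagonal constant, after which compactness rules out a nonzero diagonal one, so again $d=0$ and $K=aP_\#$.

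Combining the two halves gives $\bc P_\#\oplus\bc P_\#^\bot\subseteq\gpb^\bz\subseteq\bc P_\#\oplus\bc P_\#^\bot$, and the same chain with $\bp_\bz$ in place of $\bz$, which yields the asserted equalities. The only genuinely substantive point is the identification of the commutants of $S$ and of the finite-permutation representation on $\ell^2(\bz)$, together with the observation that these commutants intersect the compact operators only in $\{0\}$; the rest is routine bookkeeping with the block decomposition along $\bc e_\#\oplus\ell^2(\bz)$.
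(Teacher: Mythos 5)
Your proof is correct and follows essentially the same route as the paper's: both decompose $\ell^2(\{\#\}\cup\bz)$ as $\bc e_\#\oplus\ell^2(\bz)$, observe that the implementing unitaries are block diagonal of the form $P_\#\oplus V$, and reduce the computation to identifying the operators commuting with the bilateral shift (resp.\ with all finite permutations) inside the compacts. The only difference is that the paper quotes the two key facts, namely $(\ck(\ell^2(\{\#\}\cup\bz)))^{\bz}=\bc P_\#$ and $\cb(\ell^2(\bz))^{\bp_\bz}=\bc I$, from \cite{CrFid} and \cite{Fbo}, whereas you reprove them directly (commutant of the bilateral shift $\cong L^\infty(\bt)$ meeting the compacts only in $0$; constancy of matrix entries forced by permutation invariance), so your argument is more self-contained but not different in substance.
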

\begin{proof}
Denote by $\{V_g\mid g\in\bp_\bz\}$ and $V$ the unitary implementations of the permutations $\bp_\bz$ and the shift on $\ell^2(\bz)$, respectively. The corresponding implementation of the permutations and the shift on the Boolean Fock space are given, respectively, by
$$
U_g=P_\#\oplus V_g\,,\,\,\,g\in\bp_\bz\,,\quad U=P_\#\oplus V\,,
$$
see \cite{F} for more details.
It is easy to show that,  first (cf. Proposition 3 of
\cite{Fbo})
$$
\cb(\ell^2(\bz))^{\bp_\bz}=\{V_\t\mid\t\,\,\text{transposition}\}'=\bc I_{\ell^2(\bz)}\,,
$$
and second (cf. Lemma 7.2 of \cite{CrFid}),
$$
(\ck(\ell^2(\{\#\}\cup\bz)))^{\bz}=\bc P_\#\,,
$$
which lead to the assertion.
\end{proof}
\noindent
Denote $E$ the conditional expectation onto $\gpb^{\bz}$ given by
$$
E(A+bI):=\langle Ae_\#,e_\#\rangle P_\#+bI\,,\quad A\in\ck(\ell^2(\{\#\}\cup\bz))\,,\,\,b\in\bc\,.
$$
Notice that it is invariant both for the action of the shift and the permutations.
Although the fixed point subalgebra is non trivial, the following result ensures that, in the Boolean case, the $E^\bz$-mixing property for the shift holds.
\begin{prop}
\label{bshi}
The $C^*$-dynamical system $(\gpb, \a)$ is $E^\bz$-mixing, with $E=E^\bz$ the unique invariant conditional expectation onto the fixed point subalgebra.
\end{prop}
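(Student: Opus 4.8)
The plan is to use the concrete picture $\gpb=\ck\big(\G(\ell^2(\{\#\}\cup\bz))\big)+\bc I$ from \cite{CrFid}, together with the fact that the shift is implemented by the unitary $U=P_\#\oplus V$, $V$ being the bilateral shift on $\ell^2(\bz)$. The key elementary observation is that $U$ fixes $e_\#$ whereas $V^n\to 0$ weakly on $\ell^2(\bz)$; hence for every $\xi=\xi_\# e_\#\oplus\xi_0\in\ell^2(\{\#\}\cup\bz)$ one has $U^n\xi\to\xi_\# e_\#$ weakly as $n\to\infty$, where $\xi_\#=\langle\xi,e_\#\rangle$.

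First I would record that $E$ as defined is a $\a$-invariant conditional expectation of $\gpb$ onto $\gpb^\bz=\bc P_\#\oplus\bc P_\#^\bot$ (Proposition \ref{fixal}): this is immediate from $\a(P_\#)=P_\#$, multiplicativity of $E$ on $\gpb^\bz$, and $\langle\a(A)e_\#,e_\#\rangle=\langle A e_\#,e_\#\rangle$, which holds since $U e_\#=e_\#$. Uniqueness of the invariant conditional expectation then comes for free once the mixing limit is proved, within the general framework of \cite{F22} (any invariant conditional expectation must be recovered from the limits below). So the whole matter reduces to showing $\lim_n f(\a^n(a))=f(E(a))$ for every $a\in\gpb$ and every $f\in\gpb^*$.

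Next comes the reduction step. Since $\ck^*\cong\cl^1$ (trace-class operators), every $f\in\gpb^*$ is of the form $f(K+bI)=\mathrm{Tr}(KS)+b\mu$ for a unique trace-class $S$ and scalar $\mu$. As $\a^n(K+bI)=\a^n(K)+bI$ and $E(K+bI)=\langle K e_\#,e_\#\rangle P_\#+bI$, the scalar contribution $b\mu$ cancels on both sides, so it suffices to prove $\mathrm{Tr}(\a^n(K)S)\to\langle K e_\#,e_\#\rangle\,\mathrm{Tr}(P_\# S)$ for $K$ compact and $S$ trace-class. By norm density of the finite-rank operators in $\ck$ and a routine 3-$\eps$ argument (the $\a^n$ are isometric), one is reduced to rank-one $K=\theta_{\xi,\eta}:=\langle\,\cdot\,,\eta\rangle\xi$, for which $\a^n(\theta_{\xi,\eta})=\theta_{U^n\xi,U^n\eta}$ and therefore $\mathrm{Tr}(\a^n(\theta_{\xi,\eta})S)=\langle S U^n\xi,U^n\eta\rangle$.

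The analytic heart is then short: $S$ being trace-class is in particular compact, so $S U^n\xi\to\xi_\# S e_\#$ in norm (compact operators turn weak convergence into norm convergence), and pairing the norm-convergent sequence $SU^n\xi$ with the weakly convergent sequence $U^n\eta\to\eta_\# e_\#$ yields $\langle S U^n\xi,U^n\eta\rangle\to\xi_\#\overline{\eta_\#}\langle S e_\#,e_\#\rangle$. Since $\langle\theta_{\xi,\eta}e_\#,e_\#\rangle=\xi_\#\overline{\eta_\#}$ and $\mathrm{Tr}(P_\# S)=\langle S e_\#,e_\#\rangle$, the two sides agree and the proof closes. The only place needing a little care is the passage from the easy weak-operator limit $\a^n(K)\to\langle K e_\#,e_\#\rangle P_\#$ to convergence against every functional of $\gpb^*$ — and this is exactly what the compactness of trace-class operators provides, once $\gpb^*$ has been split off its scalar part.
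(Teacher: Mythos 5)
Your proof is correct, but it takes a genuinely different route from the paper's. The paper invokes Proposition 2.3 of \cite{F22}, which characterises $E^\bz$-mixing by the \emph{norm} convergence of the Ces\`aro averages $\frac1n\sum_{k=1}^n\a^{l_k}(A)$ to $E(A)$ along every subsequence $\{l_k\}\subset\bn$; for a rank-one $A=\langle\,{\bf\cdot}\,,e_i\rangle e_j$ this is obtained from the orthogonality estimate $\big\|\sum_{k=1}^n\a^{l_k}(A)\xi\big\|\leq\sqrt{n}$ on unit vectors, in the same spirit as Proposition \ref{sum2} and Theorem \ref{mixing}. You instead verify the defining condition $f(\a^n(a))\to f(E(a))$ directly, by splitting $\gpb^*$ as $\cl^1\oplus\bc$ through $\ck^*\cong\cl^1$ (legitimate, since the scalar part $b$ of $K+bI$ is bounded by the Calkin norm) and exploiting that the trace-class operator $S$ is compact, hence converts the weak convergence $U^n\xi\to\xi_\#e_\#$ into norm convergence of $SU^n\xi$; pairing with the bounded, weakly convergent $U^n\eta$ then gives the limit for rank-one $K$, and both arguments pass to general elements by the same density and 3-$\eps$ step. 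Your uniqueness remark for the invariant conditional expectation is also sound: any invariant $F$ satisfies $f(F(a))=f(F(\a^n(a)))\to f(F(E(a)))=f(E(a))$. What the paper's route buys is a quantitative $O(n^{-1/2})$ bound on subsequential averages, uniformity with the methods of Section \ref{sec3}, and no appeal to the explicit dual; what yours buys is independence from the external criterion of \cite{F22} and a more transparent analytic mechanism (weak-to-norm improvement by compactness), which is available here precisely because $\gpb=\ck(\ell^2(\{\#\}\cup\bz))+\bc I$.
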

\begin{proof}
Let $\{e_j\mid j\in\{\#\}\cup\bz\}$ be the canonical basis of $\ell^2(\{\#\}\cup\bz)$. By a standard approximation argument and Proposition 2.3 of \cite{F22}, it is enough to show that, for each subsequence $\{l_1,l_2,\dots,l_k,\dots\}\subset\bn$,
$$
\lim_n\frac1{n}\sum_{k=1}^n\a^{l_k}(A)=\langle Ae_\#,e_\#\rangle P_\#\,,
$$
where $A$ is the rank-one operator of the form $A=\langle\,{\bf\cdot}\,,e_i\rangle e_j$, $i,j\in\{\#\}\cup\bz\}$, and the limit is meant in norm. If $A=P_\#$, it is invariant and the above condition is automatically satisfied. For $A=\langle\,{\bf\cdot}\,,e_\#\rangle e_j$, $j\in\bz$, and a unit vector $\xi\in\ell^2(\{\#\}\cup\bz)$, we get by orthogonality,
$$
\bigg\|\sum_{k=1}^n\a^{l_k}(A)\xi\bigg\|=\bigg\|\sum_{k=1}^n\langle\xi,e_\#\rangle e_{j+l_k}\bigg\|
=\sqrt{\sum_{k=1}^n|\langle\xi,e_\#\rangle|^2}\leq\sqrt{n}\,.
$$
The same is true for $A=\langle\,{\bf\cdot}\,,e_i\rangle e_\#$, $i\in\bz$, by taking the adjoint. For $A=\langle\,{\bf\cdot}\,,e_i\rangle e_j$, $i,j\in\bz$, we again get
$$
\bigg\|\sum_{k=1}^n\a^{l_k}(A)\xi\bigg\|=\bigg\|\sum_{k=1}^n\langle\xi,e_{i+l_k}\rangle e_{j+l_k}\bigg\|
=\sqrt{\sum_{k=1}^n|\langle\xi,e_{i+l_k}\rangle|^2}\leq\sqrt{n}\,.
$$
The proof follows by dividing by $n$ and taking the limit.
\end{proof}
\begin{cor}
For the set of the stationary states, we get
$$
\cs_\bz(\gpb)=\{\f\circ E\mid\f\in\cs(\gpb^\bz)\}\,.
$$
\end{cor}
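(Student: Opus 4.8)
The plan is to read off the Corollary from the $E^\bz$-mixing property established in Proposition \ref{bshi}, together with the observation (made right after Proposition \ref{fixal}) that the conditional expectation $E$ onto $\gpb^\bz$ is itself shift invariant, i.e. $E\circ\a=E$.

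First I would dispatch the easy inclusion $\{\f\circ E\mid\f\in\cs(\gpb^\bz)\}\subseteq\cs_\bz(\gpb)$. For $\f\in\cs(\gpb^\bz)$ the functional $\f\circ E$ is a state on $\gpb$, being the composition of a state with the unital completely positive map $E$; and it is shift invariant since $(\f\circ E)\circ\a=\f\circ(E\circ\a)=\f\circ E$.

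For the reverse inclusion, I would take $\psi\in\cs_\bz(\gpb)$ and apply Proposition \ref{bshi} with the functional $f=\psi\in\gpb^*$: for every $A\in\gpb$ one has $\lim_n\psi(\a^n(A))=\psi(E(A))$. Since $\psi$ is shift invariant, $\psi(\a^n(A))=\psi(A)$ for all $n$, so $\psi(A)=\psi(E(A))$, i.e. $\psi=\psi\circ E=(\psi|_{\gpb^\bz})\circ E$. Setting $\f:=\psi|_{\gpb^\bz}$, which is a state on $\gpb^\bz$ because $E$ (hence the embedding $\gpb^\bz\hookrightarrow\gpb$) is unital, gives $\psi=\f\circ E$ with $\f\in\cs(\gpb^\bz)$, as required.

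I do not expect any genuine obstacle here: all the analytic content sits in Proposition \ref{bshi}, and what remains is the routine fact that $E^\bz$-mixing forces each invariant state to factor through $E$, plus the elementary check that $\f\circ E$ is again an invariant state. If one wanted the stronger statement that $\f\mapsto\f\circ E$ is a bijection onto $\cs_\bz(\gpb)$, injectivity would follow at once from $E|_{\gpb^\bz}=\id_{\gpb^\bz}$, but the set equality as stated does not require this.
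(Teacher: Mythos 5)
Your proof is correct and is exactly the argument the paper intends: the corollary is stated without proof as an immediate consequence of the $E^\bz$-mixing in Proposition \ref{bshi} (which forces $\psi=\psi\circ E$ for every invariant $\psi$) together with the observed shift invariance of $E$ (which makes each $\f\circ E$ invariant). No gaps.
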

\vskip.3cm
\begin{rem}
Concerning the action $\b$ of the permutation group, following the same lines of Proposition \ref{bshi} we can show that
$$
\lim_{J\uparrow\bz}\frac1{|J|!}\sum_{g\in\bp_J}\b_g(A)=E(A)\,,\quad A\in\gpb\,,
$$
where $\{J\mid J\subset\bz\}$ is the direct net made of all the finite subset of $\bz$, and the limit is meant in norm. In addition, for the symmetric states we again have
$$
\cs_{\bp_\bz}(\gpb)=\{\f\circ E\mid\f\in\cs(\gpb^{\bp_\bz})\}\,.
$$
\end{rem}
\vskip.3cm
\noindent
Relatively to the symmetric and stationary states on $\gpb$, collecting together the above results and Proposition 7.3 of \cite{CrFid}, one obtains that all of them are exactly those lying on a segment. Namely,
$$
\cs_\bz(\gpb)=\cs_{\bp_\bz}(\gpb)=\{\g\om_\#+(1-\g)\om_\infty\mid\g\in[0,1]\}\,,
$$
where
$$
\om_\infty(A+bI):=b\,,\quad A\in \ck(\ell^2(\{\#\}\cup\bz))\,, \,\,b\in\bc\,.
$$

\bigskip


\begin{thebibliography}{9999}


\bibitem{ABCL} L. Accardi, A. Ben Ghorbal, V. Crismale and Y. G. Lu,
{\it Singleton conditions and quantum De Finetti's theorem},
Infin. Dimens. Anal. Quantum Probab. Relat. Top. {\bf 11} (2008),
639-660.

\bibitem{AFL} L. Accardi, A. Frigerio and J. T. Lewis,
{\it Quantum stochastic processes},
Publ. Res. Inst. Math. Sci. {\bf 18} (1982),
97-133.

\bibitem{BSc} A. Ben Ghorbal and M. Sch\"{u}rmann,
{\it Non-commutative Notions of Stochastic Independence},
Math. Proc. Cambridge Philos. Soc. {\bf 133} (2002), 531-561.

\bibitem{Boz} M. Bo\.{z}ejko,
{\it Deformed Fock spaces, Hecke operators and monotone Fock space of Muraki}, Dem. Math. {\bf XLV} (2012), 399-413.

\bibitem{BS} M. Bo\.{z}ejko and R. Speicher,
{\it Completely positive maps on Coxeter groups, deformed commutation relations, and operator spaces},  Math. Ann. {\bf 300} (1994), 97-120.

\bibitem{BR1} O. Bratteli and D. W. Robinson, "Operator algebras and
quantum statistical mechanics I, II", Springer, Berlin-Heidelberg-New
York, 1981.

\bibitem{CrF} V. Crismale and F. Fidaleo,
{\it De Finetti theorem on the CAR algebra}, Commun. Math. Phys.
{\bf 315} (2012), 135-152.

\bibitem{CrFid} V. Crismale and F. Fidaleo,
{\it Exchangeable stochastic processes and symmetric states in quantum probability},
Ann. Mat. Pura Appl. {\bf 194} (2015), 969-993.

\bibitem{CrFid1} V. Crismale and F. Fidaleo,
{\it Symmetries and ergodic properties in quantum probability},
preprint 2015.

\bibitem{CrLu} V. Crismale and Y.G. Lu,
{\it Rotation invariant interacting Fock spaces}, Infin. Dimens. Anal. Quantum Probab.
Relat. Top. {\bf 10} (2007), 211-235.

\bibitem{DykFid} K. Dykema and F. Fidaleo,
{\it Unique mixing of the shift on
the $C^*$-algebras generated by the $q$-canonical commutation
relations}, Houston J. Math. {\bf 36} (2010), 275-281.

\bibitem{F22} F. Fidaleo,
{\it On strong ergodic properties of quantum dynamical systems},
Infin. Dimens. Anal. Quantum Probab.
Relat. Top. {\bf 12} (2009), 551-564.

\bibitem{F} F. Fidaleo,
{\it Nonconventional Ergodic Theorem for quantum dynamical systems},
Infin. Dimens. Anal. Quantum Probab.
Relat. Top. {\bf 17} no.2 (2014), 1450009, 21 pp.

\bibitem{Fbo} F. Fidaleo,
{\it A note on Boolean stochastic processes},
Open Syst. Inf. Dyn. {\bf 22} no. 1 (2015), 1550004, 10 pp.

\bibitem{FM1} F. Fidaleo and F. Mukhamedov,
{\it Strict weak mixing of some $C^*$-dynamical systems
based on free shifts},
J. Math. Anal. Appl. {\bf  336} (2007), 180-187.

\bibitem{FM2} F. Fidaleo and F. Mukhamedov,
{\it Ergodic properties of Bogoliubov automorphisms in free probability},
Infin. Dimens. Anal. Quantum Probab.
Relat. Top. {\bf  13} (2010), 393-411.

\bibitem{HS} E. Hewitt, L. F. Savage,
{\it Symmetric measures on Cartesian products}, Trans. Amer. Math.
Soc. {\bf 80} (1955), 470-501.

\bibitem{K} C. K\"ostler, {\it A noncommutative extended De Finetti theorem} J. Funct. Anal. {\bf 258} (2010), 1073-1120.

\bibitem{LRT} P. Leyland, J. Roberts, D. Testard,
{\it Duality for quantum free fields}, Preprint CPT-78/P-1016, CNRS Marseille.

\bibitem{Lu2} Y.G. Lu,
{\it On the interacting free Fock space and the deformed Wigner law}, Nagoya Math. J. {\bf 145} (1997), 1-28.

\bibitem{Lu} Y.G. Lu,
{\it An interacting free Fock space and the arcsine law}, Prob. Math. Stat. {\bf 17} (1997), 149-166.

\bibitem{M} N. Muraki,
{\it Noncommutative Brownian motion in monotone Fock space}, Commun.
Math. Phys. {\bf 183} (1997), 557-570.

\bibitem{M2} N. Muraki,
{\it The five independencies as natural products}, Infin. Dimens. Anal. Quantum Probab.
Relat. Top. {\bf  6} (2003), 337-371.

\bibitem{O} R. A. Olshen,
{\it The coincidence of measure algebras under an exchangeable probability},
Z. Wahrscheinlichkeitstheorie und Verw. Gebiete {\bf 18} (1971), 153-158.

\bibitem{St2} E. St{\o}rmer,
{\it Symmetric states of infinite tensor products of $C^{*}$--algebras},
J. Funct. Anal. {\bf 3} (1969), 48-68.

\bibitem{T1} M. Takesaki,
"Theory of operator algebras I, II, III", Springer, Berlin-Heidelberg-New
York, 2003.

\bibitem{vW} W. von Waldenfels,
{\it An approach to the theory of pressure broadening of spectral lines}, in: "Probability and Information Theory II", M. Behara, K. Krickeberg and J. Wolfowitz (eds), Lect. Notes Math.
Vol. 296, Springer, Berlin-
Heidelberg-New York, 1973, 19-69.

\end{thebibliography}
\end{document}